\DeclareMathOperator{\Div}{Div}
\DeclareMathOperator{\Proj}{Proj}
\DeclareMathOperator{\Spec}{Spec}
\DeclareMathOperator{\Supp}{Supp}
\DeclareMathOperator{\vol}{vol}
 \numberwithin{equation}{subsection}
 \numberwithin{footnote}{subsection}
 \newtheorem{lem}[subsection]{Lemma}
 \newtheorem{prop}[subsection]{Proposition}
 \newtheorem{thm}[subsection]{Theorem}
\theoremstyle{upright}
 \newcommand{\N}{\mathbb N}
 \newcommand{\PP}{\mathbb P}
 \newcommand{\A}{\mathbb A}
 \newcommand{\Q}{\mathbb Q}
 \newcommand{\R}{\mathbb R}
 \newcommand{\bir}{\dashrightarrow}
\title{\large T\MakeLowercase{oroidal and toric models of fibrations over curves}}
\thanks{2020 MSC:
14J17, 
14D06, 
14M25, 
14C20. 
}
\author{\large C\MakeLowercase{aucher} B\MakeLowercase{irkar}}
\date{\today}
\begin{document}
\maketitle

\begin{abstract}
We construct relatively bounded toroidal and toric models of relatively bounded fibrations over curves.  
\end{abstract}

\tableofcontents


\section{\bf Introduction}

We work over an algebraically closed field $k$ of characteristic zero unless stated otherwise.
The aim of this paper is to construct relatively bounded toroidal and toric models of relatively bounded fibrations over curves. These constructions are crucially needed in [\ref{B-sing-Fano-fib-beyond}] to prove several conjectures in birational geometry, for example, conjectures of Shokurov and M$^{\rm c}$Kernan on singularities on Fano type fibrations. In recent years, toroidal and toric methods have been applied in other places in the study of Fano varieties and singularities, e.g. [\ref{BQ}][\ref{B-BAB}]. One of the key ingredients for allowing the use of toroidal methods is boundedness of complements [\ref{B-Fano}].

Consider a family of fibrations $f\colon X\to Z$ over curves, i.e. $f$ is a contraction from a variety onto a smooth curve. Assume that the family is relatively bounded (see \ref{ss-couples} for definitions). Our aim is to change these fibrations into new fibrations which are toroidal and still relatively bounded. 
The techniques developed in [\ref{KKMB}] are enough to produce toroidal fibrations but without the relative boundedness. 
Indeed, to apply this approach, one takes a resolution $W\to X$ so that all the fibres of $W\to Z$ have simple normal crossings singularities, and then constructs an appropriate cover of $W$. Taking the resolution we lose control of the relative boundedness. In fact, even for $\dim X=2$, it is not difficult to construct examples such that any choice of resolution would not satisfy relative boundedness. So we cannot use this approach. Instead, in this paper, we use the technique of families of nodal curves developed by de Jong [\ref{de-jong-smoothness-semi-stability}][\ref{de-jong-nodal-family}]. 
\bigskip

{\textbf{\sffamily{Toroidal models.}}}
Here is a precise formulation of existence of relatively bounded toroidal models for relatively
bounded fibrations over curves. This allows one to reduce problems in general settings to problems in toroidal settings (see [\ref{B-sing-Fano-fib-beyond}, \S 6] for more on this). For related definitions, see Sections 2 and 3; for the definition
of good tower of families of split nodal curves, see \ref{ss-tower-family-nodal-curves}. 

\begin{thm}\label{t-bnd-torification}
Let $d,r\in \N$. Then there exists an $r'\in \N$ depending only on $d,r$ satisfying the 
following. Assume that 
\begin{itemize}
\item $(X,D)$ is a couple of dimension $d$, 

\item $f\colon X\to Z$ is a projective morphism onto a smooth curve, 

\item $z\in Z$ is a closed point, and 

\item $A$ is a very ample$/Z$ divisor on $X$ such that $\deg_{A/Z}A\le r$ and $\deg_{A/Z}D\le r$.
\end{itemize}
Then, perhaps after shrinking $Z$ around $z$, there exists a commutative diagram 
$$
 \xymatrix{
 (X',D') \ar[d]^{f'} \ar[r]^{\pi} &    (X,D) \ar[d]^f \\
  (Z',E') \ar[r]^{\mu} & Z
  } 
$$  
of couples and a very ample$/Z'$ divisor $A'$ on $X'$ such that 
\begin{itemize}
\item  $(X',D')\to (Z',E')$ is a toroidal morphism, and in case $d\ge 2$, it factors as a good tower 
$$
(X_d',D_d') \to \cdots \to (X_1',D_1')
$$
of families of split nodal curves, where $(X',D')=(X_d',D_d')$ and $(Z',E')=(X_1',D_1')$,

\item  $\pi$ and $\mu$ are alterations,

\item 
$
\deg_{A'/Z'}A'\le r', \ \  \deg_{A'/Z'}D'\le r', \ \ \deg \pi\le r', \ \ \mbox{and} \ \ \deg \mu\le r',
$

\item the induced morphism 
$$
\pi|_{X'\setminus D'}\colon {X'\setminus D'}\to {X\setminus D}
$$ 
is quasi-finite,

\item $D'$ contains the fibre of $X'\to Z$ over $z$,

\item there is a Cartier divisor $G'\ge 0$ on $X'$ such that $A'-G'$ is ample$/Z'$ and $\Supp G'=D'$, and 

\item $A'-\pi^*A$ is ample over $Z'$.
\end{itemize}
\end{thm}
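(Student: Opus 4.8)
The plan is to induct on $d$, following de Jong's method of reducing to families of nodal curves [\ref{de-jong-smoothness-semi-stability}][\ref{de-jong-nodal-family}] while carrying the polarization $A$ and the boundary $D$ along so that everything stays relatively bounded, and using boundedness of complements [\ref{B-Fano}] to control the horizontal boundary one is forced to introduce. The base case $d=1$ is essentially trivial: then $f$ is finite, so after normalising $X$, replacing $Z$ by a suitably ramified cover and adding to $D$ the branch locus, the fibre over $z$, and finitely many points, the morphism becomes toroidal with all degrees bounded by $r$. So assume $d\ge 2$.

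First I would reduce the relative dimension by one. Shrinking $Z$ around $z$ we may assume $f_*A$ is locally free and, after replacing $A$ by a bounded multiple, embed $X\hookrightarrow \PP:=\PP_Z(f_*A)$, a projective bundle of rank bounded in terms of $d,r$ since $\deg_{A/Z}A\le r$ bounds the Hilbert polynomial of a general fibre. A general linear projection $\PP\bir \PP^{d-2}_Z$ restricts to a dominant rational map $X\bir \PP^{d-2}_Z$ with generic fibre a curve; resolving indeterminacy by a bounded sequence of blow-ups gives a projective morphism $V\to Y$ over $Z$ with $V\to X$ birational, $\dim Y=d-1$, generic fibre a curve whose genus $g$ is bounded by adjunction, and an induced very ample$/Z$ polarization on $Y$ of bounded degree. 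Replace $D$ by its birational transform plus the exceptional divisors, and enlarge it further using a bounded $n$-complement of $(X,D)$ from [\ref{B-Fano}] so that the horizontal part of $D$ meets the generic fibre of $V\to Y$ in a bounded number $n$ of points. Next I would make $V\to Y$ a family of nodal curves: its generic fibre is a curve over $k(Y)$ of bounded genus with $n$ marked points, so after a bounded base change rigidifying the family it induces a rational map $Y\bir \overline M_{g,n}$; since $g,n$ are bounded, $\overline M_{g,n}$ is fixed, and resolving indeterminacy and pulling back the universal curve — after an alteration of $Y$ and of $Z$ of bounded degree and a compatible modification of $V$, plus one more bounded base change to split the nodes — yields a family of split nodal curves over a variety $Y^\circ$ of dimension $d-1$. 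Pulling $D$ back and adding the non-reduced fibre components and the new exceptional divisors keeps the couple toroidal, with number of components and multiplicities still bounded by [\ref{B-Fano}].

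Now apply the theorem inductively to the induced fibration $Y^\circ\to Z$ (over the current model of $Z$), which has relative dimension $d-2$ and is relatively bounded; base-change the family of split nodal curves along the resulting alteration of $Y^\circ$, normalise, re-split after a further bounded base change, and stack the two towers. This produces the commutative diagram, the toroidal morphism $(X',D')\to (Z',E')$, and the good tower of families of split nodal curves. For the remaining bullets: shrinking $Z$ around $z$ we discard all but finitely many bad fibres and perform a local semistable reduction at the point over $z$, so that the fibre over $z$ becomes a reduced nodal fibre of the tower, which we add to $D'$ without destroying toroidality; we arrange that every $\pi$-exceptional divisor and every non-reduced fibre component lies in $D'$, so that $\pi|_{X'\setminus D'}$ is quasi-finite; and we take $A':=\pi^*A+A''$ for a bounded very ample$/Z'$ divisor $A''$ chosen large enough that $A'$ is very ample$/Z'$ with $\deg_{A'/Z'}A'\le r'$ and $\deg_{A'/Z'}D'\le r'$, that $A'-\pi^*A=A''$ is ample$/Z'$, and that $A'-G'$ is ample$/Z'$ where $G':=mD'$ for a bounded $m$ with $mD'$ Cartier (such an $m$ exists uniformly because the toroidal $X'$ has bounded-index singularities along $D'$); note $\Supp G'=D'$. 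Every bound produced depends only on $d$ and $r$, which gives the required $r'$.

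The main obstacle is to keep the alterations in the middle step of bounded degree while simultaneously controlling $D'$. Boundedness of $g$ and $n$ makes $\overline M_{g,n}$ a fixed target, but one must show that resolving the indeterminacy of $Y\bir \overline M_{g,n}$, and performing the base changes needed for semistable reduction, can be done with degree bounded uniformly over the bounded family in which the data lives; and one must control the extra boundary — non-reduced fibres, exceptional divisors, horizontal marked points — forced on us to make the morphism toroidal. The latter is exactly what boundedness of complements [\ref{B-Fano}] buys, and splicing it cleanly into de Jong's inductive construction is the technical heart of the argument.
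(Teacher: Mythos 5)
There is a genuine gap, and it is exactly the one you flag at the end as ``the main obstacle'': you never give a mechanism for bounding the degrees of the alterations, resolutions and base changes that your construction requires. You propose to carry out de Jong's procedure directly on each individual $(X,D)\to Z$: project to reduce relative dimension, resolve indeterminacy, map to $\overline{M}_{g,n}$, resolve that indeterminacy, base-change, split, and iterate. But each ``resolve indeterminacy by a bounded sequence of blow-ups'' and ``alteration of $Y$ and of $Z$ of bounded degree'' is an assertion, not a consequence of anything you have established. The number of blow-ups needed to resolve the indeterminacy of $X\bir \PP^{d-2}_Z$, or of $Y\bir\overline{M}_{g,n}$, depends on the singularities of the specific $X$ and $Y$ at hand and is not controlled by $\deg_{A/Z}A\le r$ alone. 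The paper's introduction explicitly warns that the analogous naive resolution/covering strategy already fails for $\dim X=2$; your argument as written runs straight into that wall.

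The paper avoids this by a reduction you do not have any substitute for: Lemma \ref{l-bnd-embed-proj-space} and Lemma \ref{l-univ-family} use the Hilbert scheme to show that a relatively bounded family of couples over curves is, near any $z\in Z$, pulled back from finitely many fixed couples $(V_i/T_i,C_i)$. One then applies de Jong's alteration and semi-stable reduction theorems (Proposition \ref{p-tower-nod-curve-fix-fib}) \emph{once} to the fixed family $V\to T$, obtaining a fixed good tower of families of split nodal curves over $T$. The diagram for $(X,D)\to Z$ is then obtained by base change of this fixed tower along $Z\to T$; since the tower is fixed, all the degrees, alteration degrees, and polarization bounds come for free. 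Your proposal never leaves the ``one $(X,D)\to Z$ at a time'' setting, so the boundedness never appears.

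A secondary issue: your invocation of boundedness of complements [\ref{B-Fano}] is misplaced. The number of points in which the horizontal part of $D$ meets a general fibre of $V\to Y$ is already bounded by the hypothesis $\deg_{A/Z}D\le r$ together with the bounded degree of $A$ on fibres; no $n$-complement is needed, and complements (which concern pairs with nef anticanonical) play no role in the paper's proof of this theorem. Likewise, the $\overline{M}_{g,n}$ detour (which needs stabilization and rigidification that you elide) is not what the paper does; it uses [\ref{de-jong-nodal-family}, Theorem 2.4] and [\ref{de-jong-smoothness-semi-stability}, Theorem 5.8] directly on the fixed universal family. So the right fix is not to patch the degree-bounding claims one by one, but to insert the universal-family reduction before you start altering anything.
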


\bigskip

{\textbf{\sffamily{Toric models.}}}
Our next result aims to construct toric models of fibrations over curves, again keeping relative boundedness. 
The importance of this is that it allows one to reduce problems in toroidal settings to problems in toric settings (see [\ref{B-sing-Fano-fib-beyond}, \S7] for more on this).

\begin{thm}\label{p-local-desc-torif-bnd-fib-II}
Let $d,r$ be natural numbers. 
Assume  $(X,D)$ and $X\to Z$ satisfy the assumptions of Theorem \ref{t-bnd-torification} with the given $d,r$. 
Then we can choose 
$(X',D')\to (Z',E')$ and $r'\in \N$ in the theorem so that if $x'\in X'$ is a closed point and $z'\in Z'$ is its image, then  
perhaps after shrinking $Z'$ around $z'$, we can find a commutative diagram of varieties and couples
$$
\xymatrix{
& {M}'\ar[ld] \ar[rd] & &N' \ar[ll]\ar[d]\\
(X',D') \ar[rd] & & ({Y}',{L}')\ar[ld] \ar@{-->}[r] & P'=\PP^{d-1}_{Z'}\ar[lld]\\
&(Z',E') &&
}
$$
where
\begin{enumerate}
\item all arrows are projective morphisms, except that $Y'\bir P'$ is a birational map,

\item $N'\to {M}'$ is birational and $N'\to P'$ is an alteration, 

\item $M'\to X'$ and $M'\to Y'$ are \'etale at some closed point $m'$ mapping to $x'$, 

\item the inverse images of $D'$ and $L'$ to $M'$ coincide near $m'$,

\item  if $G'$ is the sum of the coordinate hyperplanes of $\PP^{d-1}_{Z'}$ and the inverse image of $E'$, then the induced map $P'\setminus G'\bir Y'$ is an open immersion,

\item $(Y',L')$ is lc near $y'$, the image of $m'$, and any lc place of $(Y',L')$ with centre at $y'$ is an lc place of $(P',G')$, and

\item there is an ample$/Z'$ Cartier divisor $H'$ on ${Y}'$ such that 
$$
\vol_{/Z'}(A'|_{N'}+H'|_{N'}+G'|_{N'})\le r'
$$ 
where $A',r'$ are as in Theorem \ref{t-bnd-torification}.
\end{enumerate}
\end{thm}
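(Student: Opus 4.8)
The strategy is to reduce the relative-boundedness statement, which is about a fibration over a curve, to a local toric picture by exploiting the toroidal structure already produced in Theorem \ref{t-bnd-torification}. By that theorem we may assume $(X',D')\to (Z',E')$ is toroidal and, for $d\ge 2$, factors as a good tower of families of split nodal curves; in particular, étale-locally around any closed point $x'\in X'$ the couple $(X',D')$ looks like a toric couple, i.e. there is an étale neighbourhood $M'\to X'$ of a point $m'$ over $x'$ together with an étale map $M'\to (Y',L')$ onto a toric couple (over $Z'$), with $D'$ and $L'$ pulled back to the same divisor near $m'$. This gives items (3) and (4) and, by construction, the diamond with $M'$ at the top. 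The point of $Y'$ is that, being toric, it is \emph{dominated by projective space}: there is a birational toric map $Y'\bir P'=\PP^{d-1}_{Z'}$, and one arranges (by choosing the fan/chart suitably, as in the standard toric resolution picture) that the complement of the coordinate hyperplanes plus the inverse image of $E'$ maps isomorphically, giving item (5), while item (6) is the statement that the log discrepancies of $(Y',L')$ at $y'$ are computed already on $(P',G')$ — true because all the relevant valuations are monomial/toric.

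Next I would build $N'$ and $M'$ as the relevant fibre products. Take $N'$ to be a resolution (or suitable alteration) of the main component of $M'\times_{Y'}P'$, so that $N'\to M'$ is birational and $N'\to P'$ is an alteration; projectivity of all the downward arrows is automatic since everything is constructed over $Z'$ by projective operations and $P'\to Z'$ is projective. The only genuinely non-formal input is item (7): the volume bound $\vol_{/Z'}(A'|_{N'}+H'|_{N'}+G'|_{N'})\le r'$. Here $A'$ is the very ample$/Z'$ divisor from Theorem \ref{t-bnd-torification} with $\deg_{A'/Z'}A'\le r'$, and $H'$ is to be chosen as an ample$/Z'$ Cartier divisor on $Y'$. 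The idea is that relative boundedness of $(X',D')\to Z'$ together with $\deg_{A'/Z'}A'\le r'$ bounds the volume of $A'$ on the relevant fibres; pulling back to $M'$ is harmless since $M'\to X'$ is étale hence preserves intersection numbers fibre-wise; and the contributions of $H'$ and $G'$ are controlled because $H'$ can be chosen inside a bounded family (the toric $Y'$ varies in a bounded family once the fan combinatorics is bounded, which is forced by the bounded degree of $\pi,\mu$ and by boundedness of complements as recalled in the introduction), while $G'$ is the toric boundary of $P'=\PP^{d-1}_{Z'}$ pulled back, whose volume against $A'$ is bounded by $\deg_{A'/Z'}D'\le r'$-type estimates. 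Absorbing all these into a single constant — again depending only on $d,r$ — gives the bound after enlarging $r'$.

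The main obstacle I anticipate is precisely controlling the volume in (7): one must simultaneously (a) keep the toric model $(Y',P')$ in a bounded family, which requires that the combinatorial data (the fan, hence the lattice and the rays) defining the étale-toric chart be bounded, and (b) choose $H'$ ample$/Z'$ on $Y'$ with bounded ``size'' relative to $A'$ and the boundary. Step (a) is where boundedness of complements [\ref{B-Fano}] and the bounded degrees of the alterations $\pi,\mu$ from Theorem \ref{t-bnd-torification} enter: they pin down the singularities of $(X',D')$, hence of $(Y',L')$, to a bounded family, and a bounded toroidal/toric couple admits only boundedly many étale-toric charts up to the relevant equivalence. Once (a) is in hand, (b) is a matter of bounding intersection numbers on a bounded family and can be done by a Noetherian/constructibility argument. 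The rest — constructing the diamond, the fibre products, checking the lc and lc-place statements (6), and the open-immersion statement (5) — is standard toric geometry applied chart by chart and should go through without serious difficulty, modulo shrinking $Z'$ around $z'$ as permitted in the statement.
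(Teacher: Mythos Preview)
Your overall shape is right --- build a toric model $(Y',L')$ \'etale-locally equivalent to $(X',D')$ near $x'$, map it birationally to $P'=\PP^{d-1}_{Z'}$, and resolve to get $N'$ --- but there is a genuine gap in how you propose to handle item~(7), and it stems from working at the wrong level.

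The paper does \emph{not} construct the toric model $(Y',L')$ directly from $(X',D')$ and then argue that the fan combinatorics are bounded. Instead, it works at the level of the \emph{universal family}: recall from the proof of Theorem~\ref{t-bnd-torification} that $(X',D')\to(Z',E')$ is the pullback, via base change along $Z'\to V_1$, of a \emph{fixed} good tower $(V_d,C_d)\to\cdots\to(V_1,C_1)$ depending only on $d,r$. Proposition~\ref{p-local-desc-fix-tower} then produces, for this fixed tower, \emph{finitely many} diagrams with special toric towers $(V_d'',C_d'')\to\cdots\to(V_1'',C_1'')$ and \'etale neighbourhoods $W_d$, such that every closed point of $V_d$ is covered by one of them. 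The objects $Y',M',N',P'$ and the divisor $H'$ are all obtained by base changing these finitely many fixed diagrams (after projective compactification) along $Z'\to V_1$. The volume bound in~(7) then falls out immediately: $A',H',G'$ are pullbacks of fixed divisors on the universal diagram, and the relative volume over $Z'$ is bounded by the relative volume over $V_1$, which is a fixed number.

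Your proposed route --- bound the toric chart combinatorics at each $x'$ using boundedness of complements and the degree bounds on $\pi,\mu$ --- does not give this. Boundedness of complements is not invoked anywhere in this proof, and there is no evident mechanism by which the degree of the alterations would control the fan of an \'etale-local toric model (indeed, toroidal singularities of bounded type can still require charts with arbitrarily complicated fans if one chooses them freely). Likewise, constructing $N'$ as a resolution of $M'\times_{Y'}P'$ gives you no uniform control on its volume unless $M',Y',P'$ themselves come from a bounded family --- which is exactly what the universal construction provides and your pointwise construction does not. A similar remark applies to item~(6): the lc-place statement is not just ``toric valuations are monomial'' but requires Lemma~\ref{l-lc-place-pullback-special-toric-tower}, which in turn rests on Proposition~\ref{p-bchange-special-tower} comparing the pullback tower to an honest special toric tower via \'etale maps; your sketch elides this.
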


Using the diagram in Theorem \ref{p-local-desc-torif-bnd-fib-II}, problems on $X'$ near $x'$ can be translated into problems on $Y'$ near $y'$ via $M'$. 
Here $Y'$ is toric near $y'$ over some formal neighbourhood of $z'$, the image of $y'$ in $Z'$. In turn, the birational map $Y'\bir P'$ is used to further translate those problems into problems about $P'$ which is toric (not just locally) over a formal neighbourhood of $z'$. Since $Z'$ is a smooth curve, one can pretend that it is just $\A^1$, hence translate the problems into genuinely toric problems. Indeed, this is how Theorem \ref{p-local-desc-torif-bnd-fib-II} is used in [\ref{B-sing-Fano-fib-beyond}]. 

General (weak) semi-stable
reductions have been developed in recent years, e.g. [\ref{Qu}] (which is applied in [\ref{BQ}]), relying
on log geometry. It is likely that this can be used to get alternative proofs of Theorems \ref{t-bnd-torification} and
\ref{p-local-desc-torif-bnd-fib-II} but it would not be straightforward and requires work.
\bigskip

{\textbf{\sffamily{Acknowledgements.}}}
This work was partially done at the University of Cambridge. It was completed at Tsinghua University with support of a grant from Tsinghua University and a grant of the National Program of Overseas High Level Talent. Thanks to Santai Qu, Roberto Svaldi, and the referees for their valuable comments.
And thanks to the participants of activities devoted to this work, including a workshop in June 2023 and a seminar series in March--May 2024 at Tsinghua University and a workshop in May 2024 at Fudan University. 


\section{\bf Preliminaries}

We will work over an algebraically closed field $k$ of characteristic zero unless stated otherwise. Varieties are reduced, irreducible, and quasi-projective.

\subsection{Morphisms}

An \emph{alteration} is a surjective projective morphism $Y\to X$ of varieties of the same dimension, hence it is generically finite. 
A \emph{contraction} is a projective morphism $f\colon X\to Z$ of schemes with $f_*\mathcal{O}_X=\mathcal{O}_Z$, hence it is surjective with connected fibres.  

Given a morphism $g\colon Y\to X$ of schemes and a subset $T\subset X$, $g^{-1}T$ denotes the set-theoretic inverse image of $T$. If $T$ is a closed subscheme, we then consider $g^{-1}T$ with its induced reduced scheme structure. But if we consider the scheme-theoretic inverse image of $T$, we will say so explicitly. 

\subsection{Divisors, degree, and volume}\label{ss-divs-deg}
Let $X$ be a normal variety and let $D$ be an $\R$-divisor. For a prime divisor $T$ on $X$, $\mu_TD$ denotes the coefficient of $T$ in $D$. If $D$ is $\R$-Cartier and if $T$ is a prime divisor over $X$, i.e., on some birational modification $g\colon Y\to X$, then by $\mu_TD$ we mean $\mu_Tg^*D$. Here and elsewhere,
by a \emph{birational modification}, we mean a birational contraction $Y\to X$ from a normal variety.

Let $f\colon X\to Z$ be a surjective projective morphism of varieties. 
Let $A$ be a $\Q$-Cartier divisor on $X$. 
For a Weil divisor $D$ on $X$ we define the \emph{relative degree} of $D$ over $Z$ with respect to $A$ 
as 
$$
\deg_{A/Z}D:=(D|_F)\cdot (A|_F)^{n-1}
$$ 
where $F$ is a general fibre of $f$ and $n=\dim F$. 
It is clear that this is a generic degree,  so the vertical$/Z$ components of $D$ do not contribute 
to the degree. Note that $F$ may not be irreducible: by a general fibre we mean fibre over a general point of $Z$.
In practice, we take $A$ to be ample over $Z$.
A related notion is the \emph{relative volume} of $D$ over $Z$ which we define as $\vol_{/Z}(D):=\vol(D|_F)$.

For a morphism $g\colon V\to X$ of varieties (or schemes) and an $\R$-Cartier $\R$-divisor $N$ on $X$, 
we sometimes write $N|_V$ instead of $g^*N$. 

\subsection{Pairs and singularities}\label{ss-pairs}
A \emph{pair} $(X,B)$ consists of a normal variety $X$ and an $\R$-divisor $B\ge 0$ such that $K_X+B$ is $\R$-Cartier. We call $B$ the \emph{boundary divisor}.

Let $\phi\colon W\to X$ be a log resolution of a pair $(X,B)$. Let $K_W+B_W$ be the 
pullback of $K_X+B$. The \emph{log discrepancy} of a prime divisor $D$ on $W$ with respect to $(X,B)$ 
is defined as 
$$
a(D,X,B):=1-\mu_DB_W.
$$
A \emph{non-klt place} of $(X,B)$ is a prime divisor $D$ over $X$, that is, 
on some birational modification of $X$,  such that $a(D,X,B)\le 0$, and a \emph{non-klt centre} is the image of such a $D$ on $X$. 

We say $(X,B)$ is \emph{lc} (resp. \emph{klt})(resp. \emph{$\epsilon$-lc}) if 
 $a(D,X,B)\ge 0$ (resp. $>0$)(resp. $\ge \epsilon$) for every $D$. This means that  
every coefficient of $B_W$ is $\le 1$ (resp. $<1$)(resp. $\le 1-\epsilon$). 
Note that since $a(D,X,B)=1$ for most prime divisors, we necessarily have $\epsilon\le 1$.

A \emph{log smooth} pair is a pair $(X,B)$ where $X$ is smooth and $\Supp B$ has simple 
normal crossing singularities. Assume $(X,B)$ is a log smooth pair and assume $B=\sum_{i=1}^r B_i$ is reduced  
where $B_i$ are the irreducible components of $B$. 
A \emph{stratum} of $(X,B)$ is an irreducible component of $\bigcap_{i\in I}B_i$ for some non-empty $I\subseteq \{1,\dots,r\}$.  
Since $B$ is reduced, a stratum is nothing but a non-klt centre of $(X,B)$.

\subsection{Fibre products}

\begin{lem}\label{l-fibred-product}
Let $Z,V$ be schemes over a scheme $T$. Assume $W$ is a closed subscheme of $V$ and that the induced morphism $Z\times_TV\to V$ factors through the closed immersion $W\to V$. Then the induced morphism $Z\times_TW\to Z\times_T V$ is an isomorphism.  
\end{lem}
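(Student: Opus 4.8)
The plan is to reduce the statement to the fact that closed immersions are stable under base change, together with the observation that pulling back a closed immersion along a morphism which already factors through it yields an isomorphism.

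First I would use the pasting (associativity) property of fibre products to identify $Z\times_T W$ with $(Z\times_T V)\times_V W$ as schemes over $Z\times_T V$, where the map $Z\times_T V\to V$ in the latter fibre product is the second projection $q$ and the map $W\to V$ is the given closed embedding $i$. Indeed, for any scheme $S$, a morphism $S\to (Z\times_T V)\times_V W$ is the same as a pair $S\to Z\times_T V$, $S\to W$ agreeing over $V$, equivalently a pair $S\to Z$, $S\to W$ agreeing over $T$ (using the composite $W\to V\to T$), which is the same as a morphism $S\to Z\times_T W$; and these identifications are compatible with the structure maps to $Z\times_T V$. Under this identification, the morphism in the statement $Z\times_T W\to Z\times_T V$ becomes the projection $(Z\times_T V)\times_V W\to Z\times_T V$, i.e. the base change of $i$ along $q$; since $i$ is a closed immersion, so is this morphism.

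It then remains to see that this particular closed immersion is an isomorphism, and here I would invoke the hypothesis. If $\mathcal{I}\subseteq\mathcal{O}_V$ is the quasi-coherent ideal sheaf cutting out $W$, the closed subscheme $(Z\times_T V)\times_V W$ of $Z\times_T V$ is cut out by the image of $q^{*}\mathcal{I}\to \mathcal{O}_{Z\times_T V}$. The assumption that $q$ factors through $i\colon W\hookrightarrow V$ says exactly that $q^{\#}\colon q^{-1}\mathcal{O}_V\to\mathcal{O}_{Z\times_T V}$ kills $q^{-1}\mathcal{I}$, i.e. this image ideal is zero. Hence $(Z\times_T V)\times_V W\hookrightarrow Z\times_T V$ is the closed immersion defined by the zero ideal, which is an isomorphism. (Equivalently, and avoiding ideal sheaves: the factorization produces $q'\colon Z\times_T V\to W$ with $i\circ q'=q$; together with the first projection $Z\times_T V\to Z$, which is compatible over $T$, this gives a morphism $\psi\colon Z\times_T V\to Z\times_T W$, and one checks via the uniqueness clauses in the universal properties of $Z\times_T W$ and of $Z\times_T V$ that $\psi$ is a two-sided inverse to the morphism in the statement.) There is no real obstacle here: the statement is essentially formal, and the only points requiring a little care are the correct identification $Z\times_T W\cong (Z\times_T V)\times_V W$ over $Z\times_T V$ and the translation of ``factors through $W$'' into the vanishing of the pulled-back ideal (equivalently, that the scheme-theoretic inverse image $q^{-1}(W)$ equals all of $Z\times_T V$). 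I would present whichever of the two arguments is shorter in context; both are routine.
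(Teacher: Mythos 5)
Your main argument is correct but proceeds differently from the paper. You first identify $Z\times_T W \cong (Z\times_T V)\times_V W$ by associativity of fibre products, recognise the map in question as the base change of the closed immersion $W\hookrightarrow V$ along $q\colon Z\times_T V\to V$, and then show via ideal sheaves that the factorisation hypothesis forces the pulled-back ideal to vanish, so the resulting closed immersion is the identity. The paper instead argues purely formally with universal properties: from the given factorisation $Z\times_T V\to W$ and the projection to $Z$ one builds a morphism $g\colon Z\times_T V\to Z\times_T W$, and from $Z\times_T W\to W\to V$ and the projection to $Z$ one builds $f\colon Z\times_T W\to Z\times_T V$, then uses uniqueness in the universal property of each fibre product to conclude $fg=\mathrm{id}$ and $gf=\mathrm{id}$. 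Your parenthetical ``equivalently, and avoiding ideal sheaves'' remark is in fact the paper's proof. The paper's route is category-theoretic and works verbatim in any category with fibre products; your ideal-sheaf version is more scheme-specific but makes the geometric content (the scheme-theoretic preimage $q^{-1}(W)$ is all of $Z\times_T V$) explicit, which you may find illuminating given how the lemma is later applied. Both are complete and correct.
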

\begin{proof}
Considering the morphisms $Z\times_TW \to Z$ and $Z\times_TW\to W \to V$, and the universal property of $Z\times_TV$, we get an induced morphism 
$$
f\colon Z\times_TW\to Z\times_TV.
$$ 
On the other hand, considering $Z\times_TV\to Z$ and the assumed morphism
$Z\times_TV\to W$ factoring $Z\times_TV\to V$, and the universal property of $Z\times_TW$, we see that there is an induced morphism 
$$
g\colon Z\times_TV\to Z\times_TW.
$$ 
But then by the universal property of $Z\times_TV$, the composition $f\circ g$ is the identity morphism. Similarly, $g\circ f$ is also the identity morphism, hence both $f,g$ are isomorphisms.
\end{proof}

\subsection{Factoring morphisms}

\begin{lem}\label{l-produce-fib-rel-dim-one} 
Let $X\to Z$ be a surjective projective morphism between varieties, of relative dimension $\ge 1$, i.e. a general fibre of $X\to Z$ has dimension $\ge 1$. Then there exists a resolution $X'\to X$ so that the induced morphism $X'\to Z$ factors through a contraction $X'\to W/Z$ of relative dimension one.   
\end{lem}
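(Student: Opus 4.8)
The plan is: after a resolution $X'\to X$, to exhibit a factorization $X'\to W\to Z$ with $W\to Z$ of relative dimension one, by choosing a dominant rational map $X\bir\PP^1_Z$ over $Z$, resolving its indeterminacy, and replacing the resulting morphism to $\PP^1_Z$ by its Stein factorization.

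First I would produce the rational map from function-field considerations. Let $\eta$ be the generic point of $Z$; the generic fibre $X_\eta$ is a projective variety over $k(\eta)=k(Z)$ of dimension equal to the relative dimension of $X\to Z$, which is $\ge 1$. Hence $k(X)=k(X_\eta)$ has transcendence degree $\ge 1$ over $k(Z)$, and we may choose $\phi\in k(X)$ transcendental over $k(Z)$. Sending $t\mapsto\phi$ defines a $k(Z)$-algebra embedding $k(\PP^1_Z)=k(Z)(t)\hookrightarrow k(X)$, equivalently a dominant rational map $g\colon X\bir\PP^1_Z$ over $Z$; it is dominant because $\phi$ is transcendental over $k(Z)$, so the closure of the image of $g$ has dimension $\dim Z+1$ and is therefore all of $\PP^1_Z$.

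Next I would resolve and take a Stein factorization. Resolving the closure of the graph of $g$ inside $X\times_Z\PP^1_Z$ (using resolution of singularities in characteristic zero) yields a projective birational morphism $X'\to X$ with $X'$ smooth, through which $g$ lifts to a morphism $g'\colon X'\to\PP^1_Z$; so $X'\to X$ is a resolution. Since $X'\to X\to Z$ is projective and $\PP^1_Z\to Z$ is separated, $g'$ is proper, hence has closed image, and its image is dense because $g$ is dominant; thus $g'$ is surjective. Let $g'=\nu\circ h$ be the Stein factorization, with $h\colon X'\to W$ a contraction ($h_*\mathcal O_{X'}=\mathcal O_W$) and $\nu\colon W\to\PP^1_Z$ finite; then $W$ is a normal variety, normality being the standard property of Stein factorizations of proper morphisms out of a normal variety. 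As $\nu$ is finite and surjective, $\dim W=\dim\PP^1_Z=\dim Z+1$, so $W\to\PP^1_Z\to Z$ is a surjective projective morphism of relative dimension one. Finally the $Z$-component of $g'$ is the induced morphism $X'\to X\xrightarrow{f}Z$, so this morphism factors as $X'\xrightarrow{h}W\to Z$ through the contraction $h$, with $W/Z$ of relative dimension one, as desired.

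The argument is essentially routine and I do not expect a serious obstacle; the only two points needing care are (i) that the image of $g$ is all of $\PP^1_Z$ and not a proper multisection, which is why $\phi$ is chosen transcendental over $k(Z)$ and then one appeals to properness of $g'$, and (ii) the normality of the Stein factor $W$. If instead the intended reading is that the contraction $X'\to W$ itself has relative dimension one, so that $W\to Z$ has relative dimension $\dim X/Z-1$, the same scheme applies with $\PP^1_Z$ replaced by an appropriate target over $Z$ of relative dimension $\dim X/Z-1$, built from a relative Noether normalization of $X_\eta$ composed with a general linear projection, spread out over an open subset of $Z$.
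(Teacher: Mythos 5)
Your primary argument proves the wrong statement. You parse ``a contraction $X'\to W/Z$ of relative dimension one'' as ``$W/Z$ has relative dimension one,'' and accordingly construct $W$ as the Stein factor of a morphism $X'\to\PP^1_Z$, so that $\dim W=\dim Z+1$. But the paper's own proof and, more decisively, the sole application of the lemma (Proposition \ref{p-tower-nod-curve-fix-fib}, Step 1, where $V'\to W'$ is immediately fed into de Jong's theory to become a family of nodal curves) show that the intended reading is that the \emph{contraction} $X'\to W$ has relative dimension one, i.e.\ $\dim W=\dim X-1$. When $\dim X-\dim Z>1$ your $W$ is the wrong object: $X'\to W$ has relative dimension $\dim X-\dim Z-1$, not one.

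You do flag the alternative reading at the very end and sketch a fix via a relative Noether normalisation $X_\eta\to\PP^e_{k(Z)}$ followed by a general linear projection, spread out and then Stein-factorised. That fix is morally correct and in the same family of ideas as the paper's argument, but it is only a sketch and it is the whole content of the lemma, so it should have been the main argument. The paper's route is also a projection argument but is a bit more elementary and avoids Noether normalisation: embed $X$ in $\PP^n_Z$, drop one coordinate to get a rational map $X\bir\PP^{n-1}_Z$ with image $Y$, and if $\dim Y=\dim X$ simply project again; when the dimension finally drops by one, resolve so the map to $Y$ is a morphism and take its Stein factorisation. This iterative coordinate-dropping sidesteps the need to verify in one shot that a ``general'' projection drops the dimension by exactly one, which is the only delicate point in your sketched fallback.
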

\begin{proof}
Since $X\to Z$ is projective, it factors through a closed immersion $X\to \PP^n_Z$ followed by the projection $\PP^n_Z\to Z$. Changing the coordinates of $\PP^n$ and dropping one of them, we get a dominant rational map $\PP^n_Z\bir \PP^{n-1}_Z$ inducing a rational map $X\bir \PP^{n-1}_Z$. Let $Y$ be the image of the latter map. If $\dim X>\dim Y$, then take a resolution $X'\to X$ so that the induced map $X'\bir Y$ is a morphism, and let $X'\to W\to Y$ be the Stein factorisation of $X'\to Y$. Then $X'\to W$ is a contraction of relative dimension one, and $X'\to Z$ factors through $X'\to W$. Now assume that $X\bir Y$ is of relative dimension zero. Then we  consider a rational map $\PP^{n-1}_Z\bir \PP^{n-2}_Z$ and argue similarly and so on.    
\end{proof}

\subsection{{\'Etale morphisms}}

\begin{lem}\label{l-etale-morphism-regular-points}
Assume that $\pi\colon X\to Y$ is an \'etale morphism between normal varieties, and that $\gamma$ is a rational function on $Y$. Then $\pi^*(\gamma)$ is regular at a closed point $x\in X$ iff $\gamma$ is regular at $\pi(x)$.
\end{lem}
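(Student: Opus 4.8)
The plan is to prove that for an \'etale morphism $\pi\colon X\to Y$ between normal varieties and a rational function $\gamma$ on $Y$, regularity of $\pi^*\gamma$ at a closed point $x\in X$ is equivalent to regularity of $\gamma$ at $y=\pi(x)$. One direction is trivial: if $\gamma\in \mathcal{O}_{Y,y}$, then $\pi^*\gamma$ lies in the image of $\mathcal{O}_{Y,y}\to \mathcal{O}_{X,x}$, hence is regular at $x$. So the content is the converse: if $\pi^*\gamma$ is regular at $x$, then $\gamma$ is regular at $y$.

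For the converse, the key observation is that since $\pi$ is \'etale, the induced local homomorphism $\mathcal{O}_{Y,y}\to \mathcal{O}_{X,x}$ is flat and unramified, and in particular it is \emph{faithfully flat} (a flat local homomorphism of local rings is faithfully flat). Write $\gamma=a/b$ with $a,b\in \mathcal{O}_{Y,y}$ (possible since $\mathcal{O}_{Y,y}$ is a domain, being a local ring of a normal — hence integral — variety); we want to show $\gamma\in \mathcal{O}_{Y,y}$, i.e.\ that $a\in (b)\mathcal{O}_{Y,y}$. The hypothesis that $\pi^*\gamma$ is regular at $x$ says exactly that $a/b\in \mathcal{O}_{X,x}$, i.e.\ $a\in (b)\mathcal{O}_{X,x}$. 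Since $\mathcal{O}_{X,x}$ is faithfully flat over $\mathcal{O}_{Y,y}$, the contraction of the extended ideal is the original ideal: $(b)\mathcal{O}_{X,x}\cap \mathcal{O}_{Y,y}=(b)\mathcal{O}_{Y,y}$. As $a\in \mathcal{O}_{Y,y}$ and $a\in (b)\mathcal{O}_{X,x}$, we conclude $a\in (b)\mathcal{O}_{Y,y}$, so $\gamma\in \mathcal{O}_{Y,y}$ is regular at $y$.

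Alternatively, and perhaps more transparently, I would argue via valuations and normality: a rational function on a normal variety is regular at a point if and only if it has no pole along any prime divisor passing through that point (Serre's criterion / the fact that $\mathcal{O}_{Y,y}=\bigcap_{\mathrm{ht}\,\mathfrak{p}=1,\ \mathfrak{p}\subseteq \mathfrak{m}_y}\mathcal{O}_{Y,\mathfrak{p}}$ for $\mathcal{O}_{Y,y}$ normal). Given a prime divisor $\Gamma$ on $Y$ through $y$, since $\pi$ is \'etale it is flat of relative dimension zero and dominant on each component, so some component $\Gamma'$ of $\pi^{-1}\Gamma$ passes through $x$ and dominates $\Gamma$; moreover $\pi$ induces an isomorphism $\widehat{\mathcal{O}}_{Y,\Gamma}\cong$ an unramified extension, so $v_{\Gamma'}(\pi^*\gamma)=v_{\Gamma}(\gamma)$ (the ramification index is $1$). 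Then regularity of $\pi^*\gamma$ at $x$ forces $v_{\Gamma'}(\pi^*\gamma)\ge 0$, hence $v_\Gamma(\gamma)\ge 0$; since this holds for every prime divisor through $y$, normality of $Y$ gives $\gamma\in \mathcal{O}_{Y,y}$.

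I expect the main obstacle to be purely a matter of bookkeeping rather than real difficulty: one must make sure the chosen valuations/primes are actually ``centred'' at $x$ and $y$ respectively, and that the \'etale hypothesis is invoked correctly to guarantee (a) faithful flatness of $\mathcal{O}_{Y,y}\to \mathcal{O}_{X,x}$ in the first approach, or (b) existence of a prime divisor upstairs through $x$ lying over a given prime divisor downstairs through $y$, together with ramification index $1$, in the second. The first approach is cleanest and most self-contained, so I would present that one, perhaps with the valuation-theoretic remark as motivation. No serious calculation is needed.
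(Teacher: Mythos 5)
Your first (and preferred) argument via faithful flatness is correct, and it is a genuinely different route from the paper's. The paper argues in the contrapositive with divisors: if $\gamma$ fails to be regular at $y$, then by normality $\Div(\gamma)$ has a component of negative coefficient through $y$; since $\pi$ is \'etale one has $\Div(\pi^*\gamma)=\pi^*\Div(\gamma)$, and this pullback has a component with negative coefficient through $x$, contradicting regularity of $\pi^*\gamma$ at $x$. That is precisely the valuation-theoretic alternative you sketch at the end (it uses that \'etale maps have ramification index one along every divisor, plus normality of both $X$ and $Y$ to reduce regularity to a statement about height-one primes). Your primary argument instead writes $\gamma=a/b$, observes that regularity of $\pi^*\gamma$ at $x$ means $a\in(b)\mathcal{O}_{X,x}$, and uses faithful flatness of the local ring map $\mathcal{O}_{Y,y}\to\mathcal{O}_{X,x}$ to contract: $(b)\mathcal{O}_{X,x}\cap\mathcal{O}_{Y,y}=(b)\mathcal{O}_{Y,y}$. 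This is clean and self-contained, uses only flatness (not that $\pi$ is unramified) for the nontrivial direction, and in fact does not need normality of either variety --- integrality suffices to form the fraction field. So your argument is both more elementary and somewhat more general than the paper's; the paper's version buys nothing here except perhaps being phrased in the divisor language used elsewhere in the paper. Both are correct.
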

\begin{proof}
If $\gamma$ is regular at $y=\pi(x)$, then obviously $\pi^*(\gamma)$ is regular at $x$. Conversely, assume $\pi^*(\gamma)$ is regular at $x$. If $\gamma$ is not regular at $y$, then $\Div(\gamma)$ has a component with negative coefficient at $y$, hence since $\pi$ is \'etale, $\Div(\pi^*(\gamma))=\pi^*\Div(\gamma)$ has a component with negative coefficient at $x$. This is not possible, so $\gamma$ is regular at $y$.  
\end{proof}

\begin{lem}\label{l-special-subvar-etale}
Assume that $\pi\colon X\to Y$ is an \'etale morphism between normal varieties. Also assume that $\gamma$ is a regular function on $Y$ and $g$ is a nowhere vanishing regular function on $X$.  
Consider the closed subschemes 
$$
S\subset X\times \Spec k[\alpha_1,\alpha_2], \ \ T\subset Y\times\Spec k[\beta_1,\beta_2]
$$
defined by the equations $\alpha_1\alpha_2-\pi^*(\gamma)g=0$ and $\beta_1\beta_2-\gamma=0$, respectively.
Then the morphism  
$$
\phi\colon X\times \Spec k[\alpha_1,\alpha_2] \to Y\times\Spec k[\beta_1,\beta_2]
$$
which sends a closed point $(x,a,b)$ to the point $(\pi(x),a,\frac{b}{g(x)})$ induces a natural isomorphism $S\to X\times_YT$, hence inducing an \'etale morphism $S\to T$.
\end{lem}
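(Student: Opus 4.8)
The plan is to exhibit $\phi$ (restricted appropriately) as the composition of an explicit isomorphism with a base-change morphism, so that the statement reduces to the stability of étaleness under base change. Since $g$ is a nowhere vanishing regular function on the variety $X$, its inverse $g^{-1}$ is again a regular function on $X$; hence the assignment $(x,a,b)\mapsto(x,a,b/g(x))$ defines a morphism
$$
\psi\colon X\times\Spec k[\alpha_1,\alpha_2]\to X\times\Spec k[\beta_1,\beta_2],
$$
which is an isomorphism with inverse $(x,c,d)\mapsto(x,c,d\,g(x))$, and which satisfies $\phi=(\pi\times\mathrm{id})\circ\psi$, where $\pi\times\mathrm{id}\colon X\times\Spec k[\beta_1,\beta_2]\to Y\times\Spec k[\beta_1,\beta_2]$. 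On coordinate rings $\psi$ corresponds to $\beta_1\mapsto\alpha_1$ and $\beta_2\mapsto\alpha_2 g^{-1}$.

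Next I would track the image of the closed subscheme $S$ under $\psi$. The inverse ring isomorphism $\alpha_1\mapsto\beta_1$, $\alpha_2\mapsto\beta_2 g$ sends the defining element $\alpha_1\alpha_2-\pi^*(\gamma)g$ of $S$ to $g\cdot(\beta_1\beta_2-\pi^*(\gamma))$; since $g$ is a unit, the resulting ideal is $(\beta_1\beta_2-\pi^*(\gamma))$. Therefore $\psi$ restricts to an isomorphism of $S$ onto the closed subscheme $S'\subset X\times\Spec k[\beta_1,\beta_2]$ defined by $\beta_1\beta_2-\pi^*(\gamma)=0$.

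It then remains to identify $S'$ with $X\times_YT$ and to check that the resulting isomorphism is the one induced by $\phi$. Indeed $X\times_Y\bigl(Y\times\Spec k[\beta_1,\beta_2]\bigr)=X\times\Spec k[\beta_1,\beta_2]$, and $X\times_YT$ is the closed subscheme obtained by pulling back along $\pi$ the equation $\beta_1\beta_2-\gamma=0$ cutting out $T$, i.e. $\beta_1\beta_2-\pi^*(\gamma)=0$; so $X\times_YT=S'$ inside $X\times\Spec k[\beta_1,\beta_2]$. Since $\psi$ is the identity on the $X$-factor and $(\pi\times\mathrm{id})\circ\psi=\phi$, the isomorphism $S\xrightarrow{\ \sim\ }S'=X\times_YT$ is compatible with the projections to $X$ and, via $\phi$, to $T$, hence it is exactly the natural morphism $S\to X\times_YT$ induced by the universal property. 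Finally $X\times_YT\to T$ is the base change of the étale morphism $\pi$ and is therefore étale, so $S\to T$ is étale.

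There is no serious obstacle; the two points that require care are that $\phi$ is genuinely a morphism of schemes, which rests on $g^{-1}$ being regular, i.e. on $g$ being nowhere vanishing, and that the identification $\psi(S)=X\times_YT$ is scheme-theoretic and not merely set-theoretic — this is precisely where the hypothesis that $g$ is a unit is used, to absorb the extra factor of $g$ in the defining ideal. One should also note that the argument is global and does not require $X$ or $Y$ to be affine, since $\psi$ and the base-change identification are defined over all of $X$.
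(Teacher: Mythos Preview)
Your argument is correct and follows essentially the same route as the paper: both factor $\phi$ as an isomorphism $X\times\A^2\to X\times\A^2$ given by division by $g$ in the second coordinate, followed by the base-change morphism $\pi\times\mathrm{id}$, and then identify $S$ with the scheme-theoretic inverse image of $T$ using that $g$ is a unit. The only cosmetic difference is naming (the paper calls your $\psi$ by $\rho$ and your $\pi\times\mathrm{id}$ by $\psi$) and that you spell out the universal-property compatibility a bit more explicitly.
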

\begin{proof}
The morphism $\phi$ decomposes as  
$$
X\times \A^2 \overset{\rho}\to X\times \A^2 \overset{\psi}\to Y\times \A^2
$$
where $\rho$ sends $(x,a,b)$ to $(x,a,\frac{b}{g(x)})$ and $\psi$ sends $(x,a,b)$ to $(\pi(x),a,b)$. Here $\rho$ is an isomorphism as $g$ is nowhere vanishing, and $\psi$ is induced by base change via $\pi$. The scheme-theoretic inverse image of $T$ under $\psi$ is just $X\times_YT$, and the scheme-theoretic inverse image of $T$ under $\phi$ is just $S$ because 
$$
\phi^*(\beta_1\beta_2-\gamma)=\alpha_1\frac{\alpha_2}{g}-\pi^*(\gamma)=\frac{1}{g}(\alpha_1\alpha_2-\pi^*(\gamma)g)
$$ 
and $g$ is nowhere vanishing. Therefore, we get $S\to X\times_YT\to T$ where the former morphism is 
an isomorphism and the latter is \'etale as $\pi$ is \'etale. 
\end{proof}

\begin{lem}\label{l-desced-prime-div}
Assume that $Y\to X$ is a dominant morphism of varieties, which is \'etale at a closed point $y\in Y$. Assume that $D$ is a prime divisor over $Y$ with centre passing through $y$. Then we can find resolutions $Y'\to Y$ and $X'\to X$ so that the induced map $Y'\bir X'$ is a morphism, $D$ is a divisor on $Y'$, and the image of $D$ on $X'$ is a divisor.
\end{lem}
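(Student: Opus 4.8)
The plan is to pass from $D$ to the divisorial valuation it defines, restrict that valuation to the function field of $X$, realise the restriction as an honest prime divisor on a resolution of $X$, and finally resolve the resulting rational map. Write $g\colon Y\to X$ for the given morphism. Since $g$ is \'etale at $y$ it is quasi-finite there, so $\dim Y=\dim X$; as $g$ is dominant and $Y$ is a variety, this forces $k(Y)/k(X)$ to be a finite field extension. Let $v=\mathrm{ord}_D$ be the divisorial valuation of $k(Y)$ attached to $D$, so that its centre $Z_D$ on $Y$ contains $y$, and put $w:=v|_{k(X)}$.

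The crux is to show that $w$ is again a divisorial valuation of $k(X)$. This is the only place the hypothesis is genuinely used, and the statement really does fail for morphisms of positive relative dimension: for $Y=X\times\A^1$ and $D=X\times\{0\}$ the image of $D$ on $X$ is all of $X$, which is a divisor on no resolution. The required fact is standard valuation theory: along a finite field extension the value group passes to a subgroup of finite index and the residue field extends only finitely, so $w$ still has rank one and residue field of transcendence degree $\dim X-1$ over $k$; hence $w$ is divisorial, with centre $\overline{g(Z_D)}$ on $X$. One could instead factor $g$ through the normalisation of $X$ in $k(Y)$ and argue more geometrically, but it comes down to the same point. I expect this to be the only real obstacle; the rest is routine bookkeeping with birational modifications.

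To finish, choose a projective birational morphism $Y_1\to Y$ from a smooth variety on which $D$ becomes a prime divisor (possible since $v$ is divisorial) and a resolution $X'\to X$ on which $w$ equals $\mathrm{ord}_E$ for a prime divisor $E\subset X'$ (possible since $w$ is divisorial). Resolving the indeterminacy of the induced rational map $Y_1\bir X'$, and then resolving singularities, gives a resolution $Y'\to Y$ factoring through $Y_1$ such that the induced map $\varphi\colon Y'\bir X'$ is a morphism and the strict transform of $D$, still written $D$, is a prime divisor on $Y'$. It then remains only to identify the image of $D$ on $X'$: the centre of $v$ on $Y'$ is the generic point of $D$, and under the morphism $\varphi$ it maps to the centre on $X'$ of $v|_{k(X')}=w$, which is the generic point of $E$; hence $\varphi(D)$ is dense in $E$ and its closure is the prime divisor $E$, as required.
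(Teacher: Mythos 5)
Your argument is correct, but it takes a genuinely different route from the paper. You reduce the statement to valuation theory: since $g$ is \'etale at a point and dominant, it is generically finite, so $k(Y)/k(X)$ is a finite extension; then the restriction $w = v|_{k(X)}$ of the divisorial valuation $v=\mathrm{ord}_D$ has value group of finite index in $\Z$ and residue field a finite extension of that of $v$ (fundamental inequality), so $w$ is an Abhyankar valuation with the right invariants, hence divisorial; finally you assemble the models and identify $\overline{\varphi(D)}$ with the extracting divisor of $w$ by tracking centres. The paper instead argues geometrically, without mentioning valuations: it first base-changes by a resolution of $X$ to make $Y$ smooth near the relevant point (exploiting that \'etale morphisms preserve smoothness under base change), then runs Zariski's blowup argument in the form of [Koll\'ar--Mori, Lemma 2.45], repeatedly blowing up the centre $C$ of $D$ on $Y$ and its image $E$ on $X$; after shrinking so that $Y\to X, C\to E$ are all \'etale and smooth, each blowup of $E$ on $X$ pulls back to the blowup of $C$ on $Y$ and preserves \'etale-ness, so once $D$ is extracted on $Y$ its image on $X$ is automatically a divisor. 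Your approach is cleaner conceptually and in fact proves a more general statement (dominant and generically finite suffices; neither \'etale-ness nor the hypothesis that $y$ lies on the centre of $D$ is really used), but it imports the Abhyankar/Zariski machinery for divisorial valuations. The paper's argument is more elementary and stays within the bag of tricks (\'etale base change, blowups, [Koll\'ar--Mori]) already in use elsewhere in the paper.
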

\begin{proof}
First, pick a resolution $X'\to X$, let $Y''$ be the main component of $Y\times_XX'$, and let $y''\in Y''$ be a closed point that maps to $y$ and is contained in the centre of $D$ on $Y''$. Then the induced map $Y''\to X'$ is \'etale at $y''$; in particular, $Y''$ is smooth at $y''$. Take a resolution $Y'\to Y''$ which is an isomorphism over $y''$. Replacing $Y\to X$ and $y$ with $Y'\to X'$ and $y''$, we can assume that $Y$ is smooth at $y$. If necessary, we replace $y$ by a general closed point of the centre of $D$ on $Y$.

Let $C$ be the centre of $D$ on $Y$ and let $E$ be the closure of the image of $C$ on $X$. Shrinking $Y,X$, we can assume that $Y,X,C,E$ are all smooth and that $Y\to X$ is \'etale. Let $X'\to X$ be the blowup of $X$ along $E$. Shrinking $Y$ and letting $Y'=Y\times_XX'$, the induced map $Y'\to Y$ is the blowup of $Y$ along $C$. Also $Y'\to X'$ is \'etale. Replace $Y\to X,y$ with $Y'\to X',y'$ where $y'\in Y'$ is a closed point mapping to $y$ and contained in the centre of $D$ on $Y'$. Repeat this process. By [\ref{kollar-mori}, Lemma 2.45], after finitely many steps, $D$ is a divisor on $Y$. Since $Y\to X$ is \'etale, the image of $D$ on $X$ is also a divisor.  
\end{proof}

\subsection{Coordinate hyperplanes}
By coordinate hyperplanes of 
$$
\PP^n=\Proj k[\beta_0,\dots,\beta_n]
$$ 
we mean the hyperplanes 
defined by vanishing of the $\beta_i$. When $Z$ is a variety, by coordinate hyperplanes on 
$\PP^n_Z=\PP^n\times Z$ we mean the pullback of the coordinate hyperplanes on 
$\PP^n$ via the projection $\PP^n_Z\to \PP^n$.


\section{\bf Couples and toroidal geometry}

\subsection{Couples}\label{ss-couples} 

A \emph{couple} $(X,D)$ consists of a variety $X$ and a reduced Weil divisor $D$ on $X$.
This is more general than the definition given in [\ref{B-sing-fano-fib}] because we are not assuming $X$ to be normal 
nor projective. Also note that a couple is not necessarily a pair in the sense that we are not assuming 
$K_X+D$ to be $\Q$-Cartier. In this paper, we often consider a couple $(X,D)$ equipped with a 
\emph{surjective} projective morphism $X\to Z$ in which case we denote the couple as $(X/Z,D)$ or $(X,D)\to Z$. 
We say a couple $(X/Z,D)$ is \emph{flat} if both $X\to Z$ and $D\to Z$ are flat. A couple $(X,D)$ is \emph{of dimension $d$} if $\dim X=d$.

Let $\mathcal{P}$ be a set of couples. We say $\mathcal{P}$ is \emph{generically relatively bounded} if 
there exist natural numbers $d,r$ such that for each $(X/Z,D)\in \mathcal{P}$ we have the following: $\dim X-\dim Z\le d$ and there is a very ample$/Z$ divisor $A$ on $X$ such that 
$$
\deg_{A/Z}A\le r ~~\mbox{and} ~~\deg_{A/Z}D\le r.
$$
If in addition all the $(X/Z,D)\in \mathcal{P}$ are flat, we say that $\mathcal{P}$ is \emph{relatively bounded}. 

When $D=0$ for every $(X/Z,D)\in \mathcal{P}$, we say $\mathcal{P}$ is a set of generically relatively  bounded (resp. relatively bounded) varieties.

\begin{lem}\label{l-bnd-couple-induced-by-fibration}
Let $W\to T$ be a projective morphism of varieties and $G$ an effective Cartier divisor on $W$. 
Let $\mathcal{P}$ be the set of couples $(Y/Z,E)$ satisfying the following: 
\begin{itemize}
\item $Z$ is a variety equipped with a morphism $Z\to T$,

\item $Y$ is an irreducible component of $Z\times_TW$ with reduced structure, mapping onto $Z$, 

\item the image of $Y\to W$ is not contained in $\Supp G$, and 

\item the horizontal$/Z$ part of $E$ is contained in $\Supp (G|_Y)$.
\end{itemize}
Then $\mathcal{P}$ is a generically relatively bounded set of couples.  
\end{lem}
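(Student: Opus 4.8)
The idea is to reduce everything to the fixed data $W\to T$ and $G$, and show that the numerical invariants of $(Y/Z,E)$ are controlled purely by $W\to T$. Let me sketch the approach.

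First, embed $W\to T$ into $\PP^n_T\to T$ via a very ample divisor $H$ on $W$ (relative to $T$). We may also replace $H$ by a sufficiently positive multiple so that $H - G$ is still ample$/T$; this is possible since $G$ is a fixed effective Cartier divisor. Since the pullback of an ample divisor under a finite morphism is ample, the restriction $H|_Y$ is ample$/Z$ on each component $Y$ of $Z\times_T W$ mapping onto $Z$ — but it may fail to be very ample. To fix this we can take a further multiple, or simply allow a bounded multiple: there is a natural number $m$ depending only on $n$ (or on $W\to T$) so that $mH|_Y$ is very ample$/Z$ for every such $Y$; indeed $Z\times_T W \hookrightarrow \PP^n_Z$ and $Y$ sits inside $\PP^n_Z$ as a closed subscheme of degree bounded in terms of $\deg_{H/T} H$ and $n$, so Castelnuovo–Mumford regularity type bounds give a uniform $m$. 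Set $A := mH|_Y$.

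Next I bound the two degrees. For $\deg_{A/Z}A = (A|_F)^{\dim F}$ where $F$ is a general fibre of $Y\to Z$: the fibre $F$ is a (union of components of a) fibre of $W\to T$ meeting the base change, so $(A|_F)^{\dim F} = m^{\dim F}(H|_F)^{\dim F}$ is bounded in terms of $m$, $n$, and the generic degree of $W\to T$ with respect to $H$. Concretely, a general fibre of $Y\to Z$ is dominated by (an open subset of) a general fibre of $W\to T$ under the projection $Z\times_T W\to W$, since $Y$ maps onto $Z$ and the generic point of $Z$ maps to the generic point of its image in $T$; hence the relative dimension $\dim Y - \dim Z$ equals the relative dimension of $W$ over the closure of that image, which is $\le \dim W - \dim T =: d$, and the intersection numbers of the fibres transport directly. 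This gives $\deg_{A/Z}A \le r_1$ for $r_1$ depending only on $W\to T$ and $H$. For $\deg_{A/Z}D$: by hypothesis the horizontal$/Z$ part $E^h$ of $E$ is contained in $\Supp(G|_Y)$, and since only horizontal components contribute to the relative degree, $\deg_{A/Z}E = \deg_{A/Z}E^h \le \deg_{A/Z}(G|_Y)^{\mathrm{red, h}}$. Now $(G|_Y)|_F = G|_F$ as divisors on the general fibre (using that the image of $Y$ is not contained in $\Supp G$, so $G$ restricts to an honest effective Cartier divisor on $F$, not the whole fibre), and its reduced support has degree with respect to $A|_F = mH|_F$ bounded by $m^{\dim F - 1}$ times a bound depending on $(G\cdot H^{\dim W - \dim T - 1})$ on the general fibre of $W\to T$ and on the number of components of $G$. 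So $\deg_{A/Z}D \le r_2$ with $r_2$ depending only on $W\to T$, $G$, $H$.

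Setting $r := \max\{r_1, r_2\}$ and $d := \dim W - \dim T$, we conclude $\mathcal{P}$ is generically relatively bounded. The main obstacle — and the point needing the most care — is the passage from "ample" to "very ample" with a uniform multiple $m$: one must check that as $Z$ ranges over all varieties over $T$ and $Y$ over all components of $Z\times_T W$, the embedding dimension and projective degree of $Y$ in $\PP^n_Z$ stay bounded, so that a single Castelnuovo–Mumford regularity bound applies. This follows because $Z\times_T W$ is cut out in $\PP^n_Z$ by the pullback of the fixed equations defining $W$ in $\PP^n_T$, so the Hilbert polynomial (in the fibre direction) is the fixed one of $W\subset\PP^n_T$; any irreducible component $Y$ then has degree bounded by that, and regularity of $Y$ is bounded in terms of its degree and $n$ by Mumford's bound. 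A secondary subtlety is ensuring the general fibre of $Y\to Z$ really is (covered by) a general fibre of $W\to T$, which uses flatness/generic smoothness of $W\to T$ over a dense open of $T$ together with generic flatness of $Z\to T$; shrinking is harmless since relative degree is a generic notion.
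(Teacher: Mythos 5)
Your overall strategy is the same as the paper's: fix a very ample divisor on $W$ relative to $T$, pull it back to $Y$, and bound intersection numbers by identifying general fibres of $Y\to Z$ with unions of components of fibres of $W\to T$. However, the step you flag as ``the main obstacle,'' namely the Castelnuovo--Mumford regularity argument to upgrade ampleness to very ampleness, is unnecessary and rests on a misreading of the situation. If $H$ is very ample$/T$ on $W$, then $W\hookrightarrow \PP^n_T$ is a closed immersion; base change gives a closed immersion $Z\times_TW\hookrightarrow\PP^n_Z$, and $Y$ is in turn a closed subscheme of $Z\times_TW$. Thus $Y\hookrightarrow\PP^n_Z$ is a closed immersion and $H|_Y$ is very ample$/Z$ on the nose, with no multiple needed. (Your invocation of ``pullback of ample under a finite morphism'' is off the mark anyway, since the projection $Z\times_TW\to W$ need not be finite; but the correct reasoning makes the whole regularity digression moot.) The paper simply takes $A$ on $W$ very ample$/T$, chooses $l$ with $lA-G$ very ample$/T$, and uses $A|_Y$ directly.

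A second point where the paper is tidier: to bound $\deg_{A/Z}E$, it uses the inequality $A|_V^{d-1}\cdot G|_V\le l\,A|_V^d$ (from $G\le lA$), reducing the second bound to the first. Your alternative via the intersection number $(G\cdot H^{\dim W-\dim T-1})$ on the generic fibre of $W\to T$ is loose, since the image of $Z$ in $T$ may be a proper closed subvariety, so the relevant fibres are special fibres $W_t$ (of various dimensions $d$), not general ones; the correct statement is that the numbers $A|_V^d$ and $A|_V^{d-1}\cdot G|_V$ are bounded over all $t\in T$ and all equidimensional unions $V$ of components of $W_t$ not lying in $\Supp G$, because the fibres $W_t$ form a bounded family. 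Finally, the identification of the general fibre $Y_z$ with a union of components of $W_t$ requires a small argument, which the paper carries out: one picks an open $U\subseteq Z\times_TW$ meeting only the component $Y$, uses generic flatness of $Y\to Z$ and a dimension count to show every component of $Y_z$ meets $U_z$, and concludes each such component is an actual component of $(Z\times_TW)_z\cong W_t$ with reduced structure. Your sketch gestures at this but asserts it rather than proving it. None of these issues is fatal, but the regularity detour should be cut, and the fibre identification and the ``bounded family of fibres'' point deserve the more careful treatment the paper gives.
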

\begin{proof}
Let $A$ be a very ample$/T$ divisor on $W$. Pick a sufficiently large $l$ so that $lA-G$ is very ample$/T$. 
Let $t\in T$ be a closed point and $W_t$ be the fibre of $W\to T$ over $t$. Assume $V\subset W_t$ is a union of  
irreducible components of $W_t$ of dimension $d$ with reduced structure, and that no component of $V$ is contained in $\Supp G$. 
Then $A|_{V}^{d}$ is bounded from above as the fibres $W_t$ belong to a bounded family, hence the left hand side of 
$$
A|_V^{d-1}\cdot G|_V\le A|_V^{d-1}\cdot lA|_V= l A|_{V}^{d}
$$  
is also bounded from above.

Let $z\in Z$ be a general closed point and $t\in T$ its image. Then each irreducible component of $Y_z$ is 
an irreducible component of the reduction of $(Z\times_TW)_z$: indeed, pick an open subset $U\subseteq Z\times_TW$ such that $U$ does not intersect any irreducible component of the reduction of $Z\times_TW$ other than $Y$; then since $z$ is general and hence $Y\to Z$ is flat over $z$, counting dimensions, we see that every irreducible component $R$ of $Y_z$ intersects $U_z$; and $U_z$ is an open subset of  $(Z\times_TW)_z$; hence $R$ is an irreducible component of $(Z\times_TW)_z$ with reduced structure. 

On the other hand,  
$(Z\times_TW)_z$ is isomorphic to $W_t$ which induces an isomorphism $Y_z\to V\subseteq W_t$ where $V$ is the union of some  
irreducible components of the reduction of $W_t$.  Since $Y$ is not mapped into $\Supp G$ and since $z$ is general, no component of $Y_z$ is contained in $\Supp (G|_Y)$, so no component of $V$ is contained in $\Supp G$. Moreover, $E_z$ is mapped to a reduced divisor $D$ on $V$ with $D\subseteq \Supp (G|_V)$. 

Now by the above arguments, 
$$
A|_{Y_z}^{d}=A|_{V}^{d} \ \ ~~~\mbox{and}~~~\ A|_{Y_z}^{d-1}\cdot E_z = A|_V^{d-1}\cdot D \le A|_V^{d-1}\cdot G|_V
$$ 
are bounded from above. So such $(Y/Z,E)$ form a generically relatively bounded set of couples.
\end{proof}

\subsection{Universal families of relatively bounded families of couples}

\begin{lem}\label{l-bnd-embed-proj-space}
Let $\mathcal{P}$ be a {relatively bounded} family of couples $(X/Z,D)$ where $Z$ is a smooth curve. 
Then there is a natural number $n$ 
depending only on $\mathcal{P}$ such that for each $(X/Z,D)\in \mathcal{P}$ and each closed point $z\in Z$, 
perhaps after shrinking $Z$ around $z$, the morphism $X\to Z$ factors as a closed immersion $X\to \PP^n_Z$ 
followed by the projection $\PP^n_Z\to Z$. 
\end{lem}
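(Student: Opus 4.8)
The goal is to find a single $n$ that works uniformly over the whole relatively bounded family $\mathcal{P}$, locally on $Z$ around an arbitrary closed point $z$. The plan is to use the relative boundedness data: a very ample$/Z$ divisor $A$ on $X$ with $\deg_{A/Z}A\le r$ and $\deg_{A/Z}D\le r$, and $\dim X - \dim Z \le d$. First I would shrink $Z$ around $z$ so that $Z$ is affine, say $Z = \Spec R$ with $R$ a Dedekind (even local, or even a localisation making $A$ nice) domain; since $Z$ is a smooth curve, after shrinking we may assume $Z$ is affine and that $A$ restricts nicely. The point is that after shrinking, $X\to Z$ is flat (part of the definition of a bounded family, or arrange it) and the fibres $X_z$ have bounded Hilbert polynomial with respect to $A|_{X_z}$: indeed the leading coefficient is controlled by $\deg_{A/Z}A\le r$ and, $X_z$ being a subscheme of bounded-degree hypersurfaces-cut-out things in its own right, the whole Hilbert polynomial lies in a finite set. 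Concretely, embed $X\hookrightarrow \PP^N_Z$ via some multiple of $A$ (some $mA$ very ample$/Z$, $m$ possibly large but the image in $\PP^N$ has bounded degree $\le m^{d}r$), so the fibres $X_z\subset \PP^N_{k(z)}$ have bounded degree and bounded dimension, hence bounded Castelnuovo–Mumford regularity, hence lie in a bounded Hilbert scheme.

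Next I would invoke boundedness of the relevant Hilbert scheme / use Matsusaka-type or uniform-very-ampleness arguments: since the fibres $(X_z, A|_{X_z})$ range over a bounded family of polarised varieties, there is an $m_0$ depending only on $d,r$ such that $m_0 A|_{X_z}$ is very ample and has vanishing higher cohomology with the expected number of sections, uniformly in $z$ and in $(X/Z,D)\in\mathcal P$. Then $m_0 A$ is very ample$/Z$ after shrinking $Z$, and $h^0(X_z, m_0 A|_{X_z})$ is a fixed number $n+1$ independent of the member of the family; by flatness and cohomology-and-base-change (after shrinking $Z$ so that $f_*\mathcal{O}_X(m_0 A)$ is locally free and commutes with base change) the pushforward $f_*\mathcal{O}_X(m_0A)$ is free of rank $n+1$ on $Z$, trivialising it gives a closed immersion $X\hookrightarrow \PP(f_*\mathcal{O}_X(m_0A)) \cong \PP^n_Z$ over $Z$. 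This $n$ depends only on $d,r$, i.e. only on $\mathcal P$.

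The technical point that needs care is making all the "after shrinking $Z$" steps compatible and uniform: a priori the amount one must shrink could depend on the member $(X/Z,D)$, which is fine since the statement is for each member individually, but one must ensure the resulting $n$ does not. This is handled by first reducing to a bounded family of fibres in a fixed $\PP^N_{k(z)}$ (so that the Hilbert polynomials, and in particular $h^0$ of $m_0 A|_{X_z}$, take only finitely many values), then, for the given member, shrinking $Z$ so that the single relevant Hilbert function is constant along $Z$ and the pushforward is free — standard semicontinuity and generic-flatness over the curve $Z$. I expect the main obstacle to be the first reduction: extracting from "relatively bounded" a uniform bound on the Hilbert polynomials of the fibres, i.e. a uniform $m_0$ making $m_0 A|_{X_z}$ very ample with controlled $h^0$ across the whole family. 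This is where one uses that a bounded family of polarised pairs has uniformly bounded regularity (via an auxiliary embedding of bounded degree and e.g. the Mumford regularity bound for subschemes of projective space of bounded degree), after which the rest is routine flatness/base-change bookkeeping over the curve.
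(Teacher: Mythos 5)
Your proposal follows the same skeleton as the paper's proof — embed $X$ over $Z$ via the pushforward of a relatively very ample divisor, observe that this pushforward is locally free over the smooth curve, and bound its rank by the boundedness of the fibres — but it carries several unnecessary detours that the paper avoids, and one of them introduces a small logical wobble.

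First, you do not need any multiple $m_0A$: $A$ is already very ample over $Z$, which in particular means $A|_{X_z}$ is very ample for every $z$. Passing to $m_0A$ gains nothing and makes the degree bookkeeping worse. Second, you do not need cohomology-and-base-change, nor any uniform vanishing of $H^i$ for $i>0$. On a smooth curve $Z$, the coherent sheaf $f_*\mathcal{O}_X(A)$ is torsion-free (since $X$ dominates $Z$), hence locally free; after shrinking to an affine it is free of some rank $m+1$, and $A$ being relatively very ample already gives a closed immersion $X\hookrightarrow \PP(f_*\mathcal{O}_X(A))\simeq\PP^m_Z$ without any base-change hypothesis. Third, you only need to control $h^0$ of a \emph{general} fibre $F$ (the rank of a locally free sheaf on a curve is its generic rank), not of every fibre $X_z$; asking for uniformity over all $z$ is harder and not needed. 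The one input both proofs share is that a scheme $F$ of dimension $\le d$ with a very ample divisor $A|_F$ of degree $(A|_F)^{\dim F}\le r$ lies in a bounded family, so $h^0(F,A|_F)$ is bounded; the paper simply cites this, while you sketch a Castelnuovo–Mumford argument for it. That sketch is the right idea, but as written it is slightly circular: you embed into some $\PP^N_Z$ via $mA$ and then say the image has ``bounded degree $\le m^d r$'' and ``lies in a bounded Hilbert scheme,'' yet neither $m$ nor $N$ has been bounded at that stage. The non-circular version is to apply a regularity bound that depends only on the degree $r$ and the dimension $\le d$ of $F$ (not on the ambient $\PP^N$), from which the Hilbert polynomial and hence $h^0(F,A|_F)$ take finitely many values. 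With that fix and the simplifications above, your argument coincides with the paper's.
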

\begin{proof}
Pick $(X/Z,D)\in \mathcal{P}$. By assumption, there are fixed natural numbers $d,r$ such that $\dim X-\dim Z\le d$ and such that we can find a very ample$/Z$ divisor $A$ on $X$ with $\deg_{A/Z}A\le r$ and $\deg_{A/Z}D\le r$. 
Since $Z$ is a smooth curve, the sheaf $f_*\mathcal{O}_X(A)$ is locally free where $f$ denotes $X\to Z$. 
We can assume $Z=\Spec R$ and that $H^0(X,A)$ is a free $R$-module, say of rank $m+1$. 
Now since $f$ is projective and $A$ is very ample over $Z$, using a basis $\alpha_0,\dots,\alpha_m$ of $H^0(X,A)$, we can factor $f$ as a closed immersion $X\to \PP(H^0(X,A))$ followed by projection onto $Z$. Since $H^0(X,A)$ is a free $R$-module of rank $m+1$, $\PP(H^0(X,A))\simeq \PP^m_Z$. 

It is enough to show $m$ is bounded depending only on $\mathcal{P}$ as we can factor $\PP^m_Z\to Z$ as a closed immersion $\PP^m_Z\to \PP^n_Z$ followed by projection onto $Z$, for some fixed $n$. 
Let $F$ be a general fibre of $f$. By assumption, $A|_F^{e}=\deg_{A/Z}A\le r$  
where $e=\dim F\le d$. This implies that $F$ belongs to a bounded family (but $F$ may not be irreducible) and that 
$m=h^0(F,A|_F)-1$ is bounded from above. 
\end{proof}

\begin{lem}\label{l-univ-family}
Let $\mathcal{P}$ be a relatively bounded family of couples $(X/Z,D)$ where $Z$ is a smooth curve. Then there exist finitely many couples $({V}_i/T_i,C_i)$ satisfying the following. Assume $(X/Z,D)\in \mathcal{P}$. 
Then for each closed point $z\in Z$, perhaps after shrinking $Z$ around $z$, there exist an $i$ and a morphism $Z\to T_i$ such that 
$$
X=Z\times_{T_i}V_i \ \mbox{and} \ D=Z\times_{T_i}C_i.
$$
\end{lem}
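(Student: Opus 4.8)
The plan is to use the factorisation through a fixed projective space provided by Lemma \ref{l-bnd-embed-proj-space}, and then invoke the existence of a Hilbert scheme (or a finite union of quasi-projective pieces of it) to parametrise the possible $(X,D)$ uniformly. First I would apply Lemma \ref{l-bnd-embed-proj-space} to get a single natural number $n$, depending only on $\mathcal{P}$, so that for every $(X/Z,D)\in\mathcal{P}$ and every closed point $z\in Z$, after shrinking $Z$ around $z$ the morphism $X\to Z$ factors as a closed immersion $X\hookrightarrow \PP^n_Z$ followed by the projection. Thus, replacing $Z$ by an affine neighbourhood of $z$, the couple $(X,D)$ is a closed subscheme of $\PP^n_Z$ together with a reduced divisor $D$ which is also a closed subscheme of $\PP^n_Z$ (supported on $X$). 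Both $X\to Z$ and $D\to Z$ are flat since $\mathcal{P}$ is a relatively bounded (not merely generically relatively bounded) family.

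Next I would bound the Hilbert polynomials. Since $\deg_{A/Z}A\le r$ with $A=\mathcal{O}_{\PP^n_Z}(1)|_X$, a general fibre $F$ of $X\to Z$ has bounded degree in $\PP^n$, hence $F$ — and with it the flat family $X\to Z$ — has Hilbert polynomial lying in a finite set determined by $d,r,n$; likewise $\deg_{A/Z}D\le r$ bounds the Hilbert polynomial of the flat family $D\to Z$. Fix one such pair of Hilbert polynomials $(P,Q)$. Consider the Hilbert scheme $\mathcal{H}=\operatorname{Hilb}^{P}(\PP^n)$ parametrising closed subschemes of $\PP^n$ with Hilbert polynomial $P$, with its universal family $\mathcal{X}\subset \PP^n\times\mathcal{H}$; inside the relative Hilbert scheme of $\mathcal{X}/\mathcal{H}$ take the locus $\mathcal{H}'$ parametrising a subscheme of the fibre with Hilbert polynomial $Q$, with universal family $\mathcal{D}\subset\mathcal{X}'\subset\PP^n\times\mathcal{H}'$. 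Then $\mathcal{H}'$ is a projective scheme, and I would restrict to the (locally closed, hence finite-type) subscheme $\mathcal{H}''\subseteq\mathcal{H}'$ over which the fibres of $\mathcal{X}'$ are reduced, the fibres of $\mathcal{D}'$ are reduced divisors supported on the corresponding fibre of $\mathcal{X}'$, and $\mathcal{X}'$ is a variety (i.e. irreducible, or more precisely has the geometric type we want); cover $\mathcal{H}''$ by finitely many irreducible quasi-projective subschemes $T_i$. Setting $V_i=\mathcal{X}'|_{T_i}$ and $C_i=\mathcal{D}'|_{T_i}$ gives finitely many couples $(V_i/T_i,C_i)$. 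Given $(X/Z,D)\in\mathcal{P}$ and $z\in Z$, shrink $Z$ as above; the flat family $(X,D)\subset\PP^n_Z$ is then classified by a morphism $Z\to\mathcal{H}''$, which lands in some $T_i$ after further shrinking $Z$ (since the $T_i$ cover and $Z$ may be taken small), and by the universal property of the Hilbert scheme $X=Z\times_{T_i}V_i$ and $D=Z\times_{T_i}C_i$.

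The main obstacle is the bookkeeping around the reducedness of $D$ (and of $X$) and around ensuring the parametrising spaces $T_i$ actually carry families of \emph{couples} in the sense of \ref{ss-couples}: the Hilbert scheme naturally parametrises closed subschemes, not reduced ones, so one must check that the condition "the fibre is reduced" is locally closed on the base — which it is, by standard flatness/semicontinuity arguments — and similarly that "$D$ is a Cartier (or Weil) divisor on $X$" cuts out a locally closed locus, so that after passing to finitely many locally closed pieces the universal families genuinely give couples. A secondary point is that $X$ is required to be a variety (irreducible); one can either arrange irreducibility of the generic fibre by passing to a further stratification of the base, or simply note that irreducibility of the total space of a flat family with irreducible base is controlled by the generic fibre, so restricting to the locus where the generic fibre has the correct number of components suffices. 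None of these steps is deep, but they require care to state cleanly, and I would present them briskly, citing standard properties of Hilbert schemes and flat families rather than reproving them.
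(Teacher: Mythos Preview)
Your approach is essentially the same as the paper's: embed in $\PP^n_Z$ via Lemma \ref{l-bnd-embed-proj-space}, bound the Hilbert polynomials, and parametrise by Hilbert schemes. There are two bookkeeping differences worth noting.

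First, instead of the relative (flag) Hilbert scheme of pairs $D\subset X\subset\PP^n$, the paper parametrises $X$ and $D$ by \emph{separate} Hilbert schemes $R$ and $S$, sets $T=R\times S$, $V=W\times S$, $C=G\times R$, and only checks $C_i\subset V_i$ at the very end. This is cosmetic.

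Second, and this is where you should be more careful: the paper does \emph{not} stratify the parameter space by the locally closed locus where fibres are reduced or irreducible. Instead it replaces $T$ by the closure of the union of the images of all the classifying maps $Z\to T$, then takes the irreducible components $T_i$ with reduced structure, passes to the reduction of the universal divisor and to an irreducible component of the universal family over each $T_i$, and invokes Lemma \ref{l-fibred-product} to see that $X=Z\times_{T_i}V_i$ and $D=Z\times_{T_i}C_i$ are unaffected by these replacements. This matters because your stratification step has a gap: if the $T_i$ are merely a locally closed cover, the point $z$ may map into a stratum $T_j$ lying in the boundary of the stratum $T_{i_0}$ that contains the image of the generic point of $Z$, and then no shrinking of $Z$ around $z$ lands entirely in a single $T_i$. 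Working with closed irreducible pieces and fixing reducedness and irreducibility a posteriori via Lemma \ref{l-fibred-product} is precisely how the paper avoids this.
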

\begin{proof}
 By Lemma \ref{l-bnd-embed-proj-space}, there is an $n$ depending only on $\mathcal{P}$ 
such that perhaps after shrinking $Z$ around $z$ the morphism $X\to Z$ factors through a 
closed immersion $X\to \PP^n_Z$. In particular, $X\to Z$ 
can be viewed as a flat family of closed subschemes $X_u$ of $\PP^n$, $u\in Z$, with finitely many possible 
Hilbert polynomials depending only on $\mathcal{P}$. Similarly, since $D\to Z$ is flat, it can be viewed as a flat family of  
closed subschemes $D_u$ of $\PP^n$ (of one dimension less), $u\in Z$, again with finitely many possible Hilbert polynomials depending only on $\mathcal{P}$. Below we will keep in mind that $D_u\subset X_u$. Shrinking $\mathcal{P}$, we can assume the Hilbert polynomials are fixed in each case.

By the existence of Hilbert schemes and their associated universal families, there are reduced schemes $R,S$ over $k$ 
and closed subschemes $W\subset \PP^n_R$ and $G\subset\PP^n_S$ such that the projections $W\to R$ and $G\to S$  
are flat, and if $(X/Z,D)\in \mathcal{P}$, 
then there are morphisms $Z\to R$ and $Z\to S$ inducing  
$$
X= Z\times_{R}W \ \mbox{and} \ D= Z\times_{S}G.
$$ 

Let $T=R\times S$, consider 
$$
V:=W\times S \ \mbox{and} \ C:=G\times R
$$ 
as closed subschemes of $\PP^n_T$, 
and consider the projections $V\to T$ and $C\to T$. If $(X/Z,D)$ is in $\mathcal{P}$, then the morphisms 
$Z\to R$ and $Z\to S$ determine a morphism $Z\to T$, and we can identify 
$$
X= Z\times_{T}V \ \mbox{and} \ D= Z\times_{T}C.
$$ 
Replace $T$ 
with the closure of the union of the images of the possible morphisms $Z\to T$ for all $(X/Z,D)\in\mathcal{P}$. 
Replace $V,C$ accordingly by base change (but at this point $V,C$ may not be reduced nor irreducible).

Pick $(X/Z,D)\in\mathcal{P}$ and let $Z\to T$ be the induced morphism. Then $Z$ is mapped into some irreducible 
component $T'$ of $T$. Let $V'\to T'$ and $C'\to T'$ be the induced families obtained by base change. Since $X$ is irreducible, 
$X\to V'$ maps $X$ into some irreducible component $V''$ of $V'$ and $X=Z\times_{T'}V''$,  by Lemma \ref{l-fibred-product}, where $V''$ is considered 
with its reduced structure. On the other hand, since $T'$ is irreducible and $C'\to T'$ is flat, every component of $C'$ 
is mapped onto $T'$. Let $C''$ be the reduction of $C'$. Since $D$ is reduced, $D=Z\times_{T'}C''$, by Lemma \ref{l-fibred-product}. 

The last paragraph shows that there are finitely many varieties $T_i$ and closed subschemes $V_i\subset \PP^n_{T_i}$ 
and $C_i\subset \PP^n_{T_i}$ where $V_i$ is integral, and $C_i$ is reduced and all of its irreducible components map onto $T_i$ such that for any $(X/Z,D)$ in $\mathcal{P}$, there is an $i$ such that 
$$
X=Z\times_{T_i}V_i \  \mbox{and} \ D=Z\times_{T_i}C_i.
$$
Also, for each $i$, there is a dense subset $L_i$ of closed points of $T_i$ such that 
for each $t\in L_i$ there is $(X/Z,D)$ in $\mathcal{P}$ 
so that $Z$ maps into $T_i$ and $t$ is in the image of $Z\to T_i$. 
In particular, if $V_{i,t}$ and $C_{i,t}$ are the fibres of $V_i\to T_i$ and $C_i\to T_i$ over $t$, 
then $C_{i,t}\subset V_{i,t}$ as $D\subset X$. Therefore, we can assume $C_i\subset V_i$ for every $i$ and view 
$C_i$ as a reduced divisor on $V_i$.
\end{proof}

\subsection{Morphisms and towers of couples}\label{ss-tower-couples}

(1) A \emph{morphism} $(Z,E)\to (V,C)$ between couples is a morphism $f\colon Z\to V$ such that 
$f^{-1}(C)\subseteq E$. 

(2) A \emph{tower of couples} is a sequence of morphisms of couples 
$$
(V_d,C_d)\to (V_{d-1},C_{d-1})\to \cdots \to (V_1,C_1)
$$
where each morphism $V_i\to V_{i-1}$ is \emph{dominant}. 

Given such a sequence, suppose in addition that $(Z_1,E_1)\to (V_1,C_1)$ 
is a morphism of couples such that over the generic point of $Z_1$ we have: $Z_1\times_{V_1}V_i$ is integral 
and not contained in $Z_1\times_{V_1}C_i$, for each $i$.  
We then define the \emph{pullback} of the tower  by base change to $(Z_1,E_1)$ as 
follows. Let $Z_i$ be the main component of $Z_1\times_{V_1}V_i$ and let $E_i$ be the 
codimension one part, with reduced structure, of the union of the inverse images of $C_i$ and $E_1$ under 
$Z_i\to V_i$ and $Z_i\to Z_1$, respectively. Note that if $C_i$ and $E_1$ are supports of 
effective Cartier divisors, then $E_i$ coincides with the union of the inverse images of $C_i$ and $E_1$, 
with reduced structure.

(3) We will not define birational maps between couples in general. But there will be a few instances in this paper in which we have two couples $(V,C)$ and $(V',C')$ together with a birational map $V\bir V'$ inducing an isomorphism $V\setminus C \to V'\setminus C'$. In this case, we say that we have a \emph{congruent birational map}  
$$
(V,C)\bir (V',C').
$$


\subsection{Toric geometry}\label{ss-toric-geometry}
We assume familiarity with toric geometry and 
we will follow [\ref{Cox-etal}] for basic concepts and results.
A \emph{toric variety} is a variety $X$ of dimension $d$ containing a torus $\mathbb{T}_X$ (that is, isomorphic to $(k^*)^d$) as an open subset so that the action of $\mathbb{T}_X$ on itself (induced by coordinate-wise multiplication of $(k^*)^d$) extends to an action on the whole $X$ [\ref{Cox-etal}, 3.1.1]. Here, $X$ is not necessarily normal. 
A \emph{toric morphism} $f\colon X\to Y$ between toric varieties is a morphism so that the restriction $f|_{\mathbb{T}_X}$ induces a morphism $\mathbb{T}_X\to \mathbb{T}_Y$ of algebraic groups and so that $f$ is equivariant with respect to the actions of the tori.  

A \emph{toric couple} is a couple $(X,D)$ where $X$ is a toric variety and $D$ is the union of the torus-invariant prime divisors on $X$. It is well-known that a normal toric couple $(X,D)$ corresponds to a certain combinatorial data (a fan structure) [\ref{Cox-etal}, Chapter 3] and that it is an lc pair [\ref{Cox-etal}, Chapter 11] satisfying $K_X+D\sim 0$.

\subsection{Formally Cartier divisors}\label{ss-formally-cartier}

Let $X$ be a variety, $x\in X$ be a closed point, and $\widehat{X}=\Spec \widehat{\mathcal{O}}_{X,x}$ where $\widehat{\mathcal{O}}_{X,x}$ denotes the completion of the local ring ${\mathcal{O}_{X,x}}$ with respect to its maximal ideal. The local ring ${\mathcal{O}}_{{X},x}$ is a G-ring (meaning Grothendieck ring) by [\ref{Matsumura}, Corollary and Remark 1 on page 259], so by definition of G-rings, the geometric fibres of $\widehat{X} \to \Spec {\mathcal{O}}_{{X},x}$ are regular: in the language of commutative algebra, this says that the homomorphism ${\mathcal{O}}_{{X},x}\to \widehat{\mathcal{O}}_{{X},x}$ is regular. 

Now assume $X$ is normal. Then $\widehat{\mathcal{O}}_{{X},x}$ is normal by the previous paragraph and [\ref{Matsumura}, Theorem 32.2] (or by [\ref{Zariski}]), hence $\widehat{X}$ is normal.
 Let $D$ be a Weil divisor on $X$. We define $\widehat{D}$ on $\widehat{X}$ as follows.  
Let $U$ be the smooth locus of 
$X$ and let $\widehat{U}$ be its inverse image in $\widehat{X}$, and $\pi\colon \widehat{U}\to U$ the induced 
morphism. Then $D|_U$ is Cartier and its pullback $\pi^*D|_U$ is a well-defined Cartier divisor. Now 
let $\widehat{D}$ be the closure of $\pi^*D|_U$ in $\widehat{X}$. Note that the complement of $\widehat{U}$ in $\widehat{X}$ has codimension at least two.

When $X$ is normal and $D$ is an effective Weil divisor on $X$, we can view $D$ as the closed subscheme of $X$ defined by the ideal sheaf $\mathcal{O}_X(-D)$ and think of $\widehat{D}$ as the corresponding closed subscheme of $\widehat{X}$, that is, if $D$ is given by an ideal $I$ near $x$, then $\widehat{D}$ is given by $\widehat{I}$.

\begin{lem}\label{l-formally-cartier}
Let $X$ be a normal variety, $x\in X$ be a closed point, and $\widehat{X}=\Spec \widehat{\mathcal{O}}_{X,x}$. 
Let $D$ be a Weil divisor on $X$ and let $\widehat{D}$ 
be the corresponding divisor on $\widehat{X}$. Then $D$ is Cartier near $x$ if and only if $\widehat{D}$ is Cartier. 
\end{lem}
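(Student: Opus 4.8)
I want to show that $D$ being Cartier near $x$ is equivalent to $\widehat D$ being Cartier on $\widehat X$. The forward direction is easy: if $D$ is Cartier near $x$, it is locally principal, cut out by some $\gamma \in \mathcal O_{X,x}$, and then $\widehat D$ is cut out by the image of $\gamma$ in $\widehat{\mathcal O}_{X,x}$, hence principal. So the content is the converse. The key tool is that the homomorphism $\mathcal O_{X,x}\to\widehat{\mathcal O}_{X,x}$ is \emph{faithfully flat} (flat because completion, faithfully flat because it is a local homomorphism of local rings), together with the fact recalled in \ref{ss-formally-cartier} that it is a \emph{regular} homomorphism, so both rings are normal and the formation of $\widehat D$ is compatible with the Weil-divisor/reflexive-sheaf dictionary.

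\textbf{Main steps.} First I would translate Cartierness into a statement about the rank-one reflexive sheaf $\mathcal O_X(D)$: $D$ is Cartier near $x$ iff the stalk $\mathcal O_X(D)_x$ is a free $\mathcal O_{X,x}$-module (of rank one), and similarly $\widehat D$ is Cartier iff $\mathcal O_{\widehat X}(\widehat D)$ is free over $\widehat{\mathcal O}_{X,x}$. Next, the crucial identification: $\mathcal O_{\widehat X}(\widehat D) = \mathcal O_X(D)_x\otimes_{\mathcal O_{X,x}}\widehat{\mathcal O}_{X,x}$. This holds because $\mathcal O_X(D)$ is the pushforward of a line bundle from the smooth locus $U$, whose complement has codimension $\ge 2$ in both $X$ near $x$ and in $\widehat X$; since completion is flat and preserves depth/$S_2$-ness (the regular-homomorphism property guarantees $\widehat X$ is normal, in particular $S_2$), tensoring the reflexive sheaf with $\widehat{\mathcal O}_{X,x}$ commutes with the "extend across codimension two" operation defining $\widehat D$. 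Concretely, one can present $\mathcal O_X(D)_x$ as the kernel or second-syzygy of a finite presentation of $\mathcal O_X(-D)_x$ (or use that $\mathcal O_X(D)_x = \Hom(\mathcal O_X(-D)_x,\mathcal O_{X,x})$) and invoke flatness of completion to commute $\Hom$ with $\otimes\widehat{\mathcal O}_{X,x}$ for finitely presented modules. Finally, apply faithfully flat descent of freeness: a finitely generated module $M$ over the Noetherian local ring $\mathcal O_{X,x}$ is free iff $M\otimes_{\mathcal O_{X,x}}\widehat{\mathcal O}_{X,x}$ is free over $\widehat{\mathcal O}_{X,x}$ (this is standard — e.g.\ via minimal number of generators being preserved, or via $\mathrm{Tor}$). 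Applying this to $M=\mathcal O_X(D)_x$ yields the converse and completes the proof.

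\textbf{Expected obstacle.} The main point requiring care is the identification $\mathcal O_{\widehat X}(\widehat D)\cong \mathcal O_X(D)_x\otimes_{\mathcal O_{X,x}}\widehat{\mathcal O}_{X,x}$, i.e.\ that forming $\widehat D$ really corresponds to base-changing the reflexive sheaf. One must check that the closure operation "extend $\pi^*(D|_U)$ across the codimension-$\ge 2$ complement of $\widehat U$ in $\widehat X$" coincides with $(-)\otimes\widehat{\mathcal O}_{X,x}$ applied to $\mathcal O_X(D)_x$; this uses normality (hence $S_2$) of $\widehat X$, which is exactly why \ref{ss-formally-cartier} went to the trouble of invoking the G-ring property of $\mathcal O_{X,x}$. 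Once that compatibility is in hand, everything else is formal faithfully-flat-descent bookkeeping. Alternatively, one can sidestep reflexive sheaves entirely: write $D = D_1 - D_2$ with $D_i\ge 0$ effective Weil divisors having no common component, note $\widehat{D} = \widehat{D_1}-\widehat{D_2}$, and reduce to the effective case where $\widehat{D_i}$ is literally the closed subscheme defined by $\widehat{I_i}$ for $I_i$ the ideal of $D_i$; then Cartierness of $D_i$ near $x$ is invertibility of $I_i$ at $x$, which descends along the faithfully flat completion. I would probably present the effective-divisor version since it avoids the $S_2$-subtlety most cleanly, using that $I\otimes_{\mathcal O_{X,x}}\widehat{\mathcal O}_{X,x}=\widehat I$ and that an ideal in a Noetherian local ring is principal (equivalently invertible, since a height-one prime's worth of reduced structure makes it a regular ideal) iff its completion is.
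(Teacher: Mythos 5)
Your proposal is correct and follows essentially the same route as the paper's proof: reduce to the key identification $\mathcal O_{\widehat X}(-\widehat D)\cong\rho^*\mathcal O_X(-D)$ (established via reflexivity and agreement away from codimension two, exactly as you outline), and then apply faithfully flat descent of invertibility. The paper in fact does what you call your ``alternative'' — first making $D$ effective by a linear change — but then still runs the reflexive-sheaf argument rather than treating it as an ideal-sheaf shortcut. One small caveat about your final paragraph: the effective-divisor version does not really sidestep the $S_2$/reflexivity point, because identifying $\mathcal O_{\widehat X}(-\widehat D)$ with the completed ideal $\widehat I$ (i.e.\ that $\widehat D$ really is the subscheme cut out by $\widehat I$, rather than some other divisor agreeing with it on $\widehat U$) is itself the content of that reflexivity argument, given the paper's definition of $\widehat D$ as a closure from $\widehat U$. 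So the two presentations are equivalent in where the work actually lies.
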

\begin{proof}
If $D$ is Cartier near $x$, then  $\widehat{D}$ is Cartier. We show the converse. 
Shrinking $X$ and changing $D$ linearly, we can assume $D$ is effective, hence $\mathcal{O}_{X}(-D)\subset \mathcal{O}_X$.
Since $X$ is normal, $\mathcal{O}_{X}(-D)$ is a reflexive coherent sheaf. Since the morphism 
$\rho\colon \widehat{X}\to X$ is flat, $\rho^*\mathcal{O}_{X}(-D)$ is reflexive too [\ref{Hartshorne}, Proposition 1.8]. 
Moreover,  $\mathcal{O}_{\widehat{X}}(-\widehat{D})$ is reflexive, actually invertible, since $\widehat{D}$ is Cartier. 
Now as observed above, denoting the smooth locus of $X$ by $U$, $D|_U$ is Cartier and so is $\widehat{D}|_{\widehat{U}}$. Therefore, 
$\rho^*\mathcal{O}_{X}(-D)$ coincides with $\mathcal{O}_{\widehat{X}}(-\widehat{D})$ on $\widehat{U}$, hence $\rho^*\mathcal{O}_{X}(-D)$ and $\mathcal{O}_{\widehat{X}}(-\widehat{D})$ are equal as both are reflexive and as the complement of $\widehat{U}\subset \widehat{X}$ has codimension at least two [\ref{Hartshorne}, Proposition 1.6].  Thus $\rho^*\mathcal{O}_{X}(-D)$ is invertible, so applying [\ref{Matsumura}, Exercise 8.3] implies $\mathcal{O}_{X}(-D)$ 
is invertible near $x$, hence $D$ is Cartier near $x$.
\end{proof}

\subsection{Toroidal couples}

A couple $(X,D)$ is \emph{toroidal} at a closed point $x\in X$ 
if there exist a \emph{normal} toric variety $W$ and a closed point $w\in W$ such that there is 
a $k$-algebra isomorphism 
$$
\widehat{\mathcal{O}}_{{X},{x}}\to \widehat{\mathcal{O}}_{{W},w}
$$ 
of completion of local rings so that the ideal of $D$ is mapped to the ideal of the toric boundary divisor 
$C\subset W$ (that is, the complement of the torus). Then  
there is a common \'etale neighbourhood of $X,x$ and $W,w$ [\ref{Artin}, Corollary 2.6].
We call $(W,C),w$ a \emph{local toric model} of $(X,D),x$.
We say $(X,D)$ is toroidal if it is toroidal at every closed point.

Now let $f\colon (X,D)\to (Y,E)$ be a morphism of couples. Let $x\in X$ be a closed point 
and let $y=f(x)$. We say $(X,D)\to (Y,E)$ 
is a \emph{toroidal morphism} at $x$ if there exist local toric models $(W,C),w $ and $(V,B),v$ of 
$(X,D),x$ and $(Y,E),y$, respectively, and a toric morphism $W\to V$ of toric varieties 
inducing a commutative diagram 
$$
\xymatrix{
\widehat{\mathcal{O}}_{{X},{x}}\ar[r] & \widehat{\mathcal{O}}_{{W},w}\\
\widehat{\mathcal{O}}_{{Y},{y}} \ar[u] \ar[r] & \widehat{\mathcal{O}}_{{V},v} \ar[u]
}
$$
where the vertical maps are induced by the given morphisms and the horizontal maps are isomorphisms 
induced by the local toric models. We say the morphism of couples $f\colon (X,D)\to (Y,E)$ is toroidal 
if it is toroidal at every closed point. 

For a systematic treatment of toroidal couples, see [\ref{KKMB}]. 

\begin{lem}\label{l-toroidal-couple-lc}
Let $(X,D)$ be a toroidal couple. Then $X$ is normal and Cohen-Macaulay, $K_X+D$ is Cartier, and  
$(X,D)$ is an lc pair.
\end{lem}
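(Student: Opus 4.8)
The plan is to reduce every assertion to the corresponding statement for the local toric model $(W,C),w$ via the isomorphism $\widehat{\mathcal{O}}_{X,x}\to\widehat{\mathcal{O}}_{W,w}$, using the fact that each of the properties in question is local, and that for a normal variety it can be checked on the completion of the local ring. Since the statements are local on $X$, it suffices to fix a closed point $x\in X$ and work near $x$; choose a local toric model $(W,C),w$ so that $\widehat{\mathcal{O}}_{X,x}\cong\widehat{\mathcal{O}}_{W,w}$ as $k$-algebras, with the ideal of $D$ matching the ideal of $C$.

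First I would establish normality and Cohen--Macaulayness. A normal toric variety is Cohen--Macaulay (see [\ref{Cox-etal}, Theorem 9.2.9 or §11.4]), so $W$ is normal and Cohen--Macaulay, hence $\widehat{\mathcal{O}}_{W,w}$, and therefore $\widehat{\mathcal{O}}_{X,x}$, is normal and Cohen--Macaulay. Normality of $\widehat{\mathcal{O}}_{X,x}$ descends to normality of $\mathcal{O}_{X,x}$ because $\mathcal{O}_{X,x}\to\widehat{\mathcal{O}}_{X,x}$ is faithfully flat with regular fibres (as recalled in \ref{ss-formally-cartier}, since $\mathcal{O}_{X,x}$ is a G-ring); likewise Cohen--Macaulayness descends along a faithfully flat local homomorphism. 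As $x$ was arbitrary, $X$ is normal and Cohen--Macaulay. Now that $X$ is known to be normal, the formation of $\widehat{D}$ on $\widehat{X}=\Spec\widehat{\mathcal{O}}_{X,x}$ makes sense as in \ref{ss-formally-cartier}, and by construction $\widehat{D}$ corresponds under the isomorphism to $\widehat{C}$, the completion of the toric boundary $C$ on $W$ at $w$.

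Next, for Cartierness of $K_X+D$: on the normal toric variety $W$, both $K_W$ and $C$ are torus-invariant Weil divisors and $K_W+C=-\sum_i D_i+\sum_i D_i$ is in fact the zero divisor (the canonical divisor of a toric variety is minus the sum of the toric prime divisors, which cancels against $C$), so $K_W+C$ is Cartier — indeed trivial near $w$. Hence $\widehat{K_W+C}$, equivalently $\widehat{K_X+D}$, is Cartier on $\widehat{X}$, and by Lemma \ref{l-formally-cartier} applied to the normal variety $X$ and the Weil divisor $K_X+D$, we conclude $K_X+D$ is Cartier near $x$; since $x$ is arbitrary, $K_X+D$ is Cartier on $X$. (In particular $(X,D)$ is a pair.)

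Finally, for the lc assertion, I would again transfer the question to $W$. The pair $(W,C)$ is lc: this is standard for the toric boundary (it is log smooth after a toric resolution, or directly from the support-function computation of log discrepancies recalled in \ref{ss-toric-geometry}, where $a(E,W,C)=\sum\alpha_i(1-1)=0$ for every toric prime divisor $E$ over $W$, and in general $a(E,W,C)\ge 0$ can be read off the same way). Log discrepancies, hence the lc property, are determined by the completion at $x$: a log resolution of $(X,B)$ near $x$ base-changes to a log resolution of $(\widehat{X},\widehat{B})$ with the same discrepancies along the prime divisors meeting the fibre over $x$, because $\mathcal{O}_{X,x}\to\widehat{\mathcal{O}}_{X,x}$ is regular; so $(X,D)$ is lc near $x$ iff $(\widehat{X},\widehat{D})$ is, iff $(\widehat{W},\widehat{C})$ is, which holds since $(W,C)$ is lc. As $x$ was arbitrary, $(X,D)$ is an lc pair. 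I expect the main subtlety to be the careful bookkeeping in this last step — matching divisors over $X$ near $x$ with divisors over $W$ near $w$ through the completions, and checking that log discrepancies are preserved — but all the required flatness and regularity facts are exactly those collected in \ref{ss-formally-cartier}.
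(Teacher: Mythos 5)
Your proposal follows essentially the same route as the paper: pass to the local toric model via the formal isomorphism, descend normality and Cohen--Macaulayness using the G-ring property, invoke Lemma \ref{l-formally-cartier} for Cartierness of $K_X+D$, and note that the lc condition is determined formally-locally. The one point you elide — why the abstract isomorphism $\widehat{\mathcal{O}}_{X,x}\cong\widehat{\mathcal{O}}_{W,w}$ identifies $\widehat{K_X}$ with $\widehat{K_W}$ up to linear equivalence — is handled in the paper by citing that completion commutes with formation of the canonical module for Cohen--Macaulay local rings ([\ref{Bruns-Herzog}, Theorem 3.3.5]); you should make that explicit rather than writing ``equivalently.''
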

\begin{proof}
Pick a closed point $x\in X$. Let $(W,C),w$ be a local toric model of $(X,D),x$. Since $W$ is toric and normal, it is Cohen-Macaulay.  
Thus $\widehat{\mathcal{O}}_{{W},w}$ is normal and Cohen-Macaulay, hence $\widehat{\mathcal{O}}_{{X},{x}}$ 
is normal and Cohen-Macaulay which implies $X$ is normal and Cohen-Macaulay at $x$, by [\ref{Bruns-Herzog}, Corollaries 2.1.8 and 2.2.23]. Alternative argument: ${\mathcal{O}}_{{X},x}$, ${\mathcal{O}}_{{W},w}$ are G-rings by [\ref{Matsumura}, Corollary on page 259], so by definition of G-rings, the homomorphisms ${\mathcal{O}}_{{X},x}\to \widehat{\mathcal{O}}_{{X},x}$ and ${\mathcal{O}}_{{W},w}\to \widehat{\mathcal{O}}_{{W},w}$ are regular; so by [\ref{Matsumura}, Theorem 32.2], 
${\mathcal{O}}_{{X},x}$ is normal (resp. regular, resp. Cohen-Macaulay, resp. reduced) iff $\widehat{\mathcal{O}}_{{X},x}$ is normal (resp. regular, resp. Cohen-Macaulay, resp. reduced) and a similar statement holds for ${\mathcal{O}}_{{W},w}$ and its completion.

Pulling back the canonical sheaf $\mathcal{O}_X(K_X)$ to $\Spec \widehat{\mathcal{O}}_{{X},{x}}$ gives the canonical sheaf of the latter [\ref{Bruns-Herzog}, Theorem 3.3.5]. In other words, $\widehat{K_X}$ is the canonical divisor of $\Spec \widehat{\mathcal{O}}_{{X},{x}}$ which is unique up to linear equivalence. Similarly, $\widehat{K_W}$ is the canonical divisor of $\Spec \widehat{\mathcal{O}}_{{W},{w}}$. Moreover, $(W,C)$ is toric, hence $K_W+C$ is Cartier near $w$. Thus using the given isomorphism $\widehat{\mathcal{O}}_{{X},{x}}\to \widehat{\mathcal{O}}_{{W},{w}}$ to identify the corresponding schemes, we deduce that $\widehat{K_X}+\widehat{D}\sim \widehat{K_W}+\widehat{C}$ is Cartier. Therefore, $K_X+D$ is Cartier near $x$, by Lemma \ref{l-formally-cartier}. 
Additionally, $(X,D)$ is lc at $x$ because $(W,C)$ is lc and because singularities are determined locally formally.
\end{proof}

We sketch an alternative approach to the second paragraph of the proof of the lemma. Applying [\ref{Artin}, Corollary 2.6], there is a common \'etale neighbourhood $U,u$ of $X,x$ and $W,w$. Assume that the inverse images of $D$ and $C$ to $U$ coincide near $u$ (this does not follow immediately from [\ref{Artin}, Corollary 2.6] but a modification of its proof should work; or one can apply [\ref{Denef}, Remark 2.4] and [\ref{KKMB}, page 195]; in this paper, however, we apply the lemma only in situations in which we will have a common \'etale neighbourhood satisfying the condition on inverse images). Then one can see quickly that, near $x$, $K_X+D$ is Cartier and  
$(X,D)$ is an lc pair.

\section{\bf Families of nodal curves and toroidalisation of fibrations}

The purpose of this section is to prove Theorem \ref{t-bnd-torification}.

\subsection{Families of nodal curves}

We now define families of nodal curves following [\ref{de-jong-smoothness-semi-stability}, 2.21-22].  
Note that these are called semi-stable curves in  [\ref{de-jong-smoothness-semi-stability}].

A \emph{nodal curve} over a field $K$ is a scheme $F$, projective over $K$, such that $F_{\overline{K}}$ is a 
connected reduced scheme of pure dimension one having at worst ordinary double point singularities where $F_{\overline{K}}$ 
means the scheme obtained after base change to the algebraic closure ${\overline{K}}$. We say 
$F$ is a \emph{split nodal curve} over $K$ if it is a nodal curve over $K$, that its irreducible components are geometrically 
irreducible and smooth over $K$, and that its singular points are $K$-rational (here singular points are points where 
$F\to \Spec K$ is not smooth). 

Now let $Y$ be a scheme. 
A \emph{family of (split) nodal curves} over $Y$ is a 
flat projective morphism $f\colon X\to Y$ of schemes such that for each $y\in Y$ the fibre $F$ over $y$ is a (split) nodal curve over the residue field $k(y)$.

\begin{lem}\label{l-nodal-curves-base-change}
Let $Y'\to Y$ be a morphism of schemes over the ground field $k$. Assume $f\colon X\to Y$ is a family of (split) nodal curves over $Y$, 
and let $X'=Y'\times_YX$. Then the induced morphism $f'\colon X'\to Y'$ is a family of (split) nodal curves over $Y'$.
\end{lem}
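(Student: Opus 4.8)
The plan is to reduce to a question about base change of a single (split) nodal curve along a field extension, where the statement becomes transparent because being a (split) nodal curve is essentially a geometric notion. First, $f'\colon X'\to Y'$ is flat and projective simply because flatness and projectivity are preserved under arbitrary base change, so only the condition on fibres needs checking. Given $y'\in Y'$ with image $y\in Y$, a routine fibre-product manipulation gives $X'_{y'}\cong X_y\times_{k(y)}k(y')$; thus the fibre $X'_{y'}$ is the base change of the fibre $F:=X_y$ along the field extension $k(y)\hookrightarrow k(y')$. Hence the lemma reduces to the assertion: if $F$ is a (split) nodal curve over a field $K$ and $K\hookrightarrow K'$ is a field extension, then $F_{K'}:=F\times_KK'$ is a (split) nodal curve over $K'$.

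To prove this, note $F_{K'}$ is projective over $K'$, and after fixing an embedding $\overline{K}\hookrightarrow\overline{K'}$ one has $(F_{K'})_{\overline{K'}}\cong F_{\overline{K}}\otimes_{\overline{K}}\overline{K'}$. The scheme $F_{\overline{K}}$ is connected, reduced, of pure dimension one, with at worst ordinary double points over the algebraically closed field $\overline{K}$, and I would check that these four properties persist after extending to $\overline{K'}$: connectedness because a connected finite type scheme over an algebraically closed field is geometrically connected; reducedness because a reduced scheme over a perfect field is geometrically reduced; purity of dimension one because a one-dimensional variety over an algebraically closed field is geometrically integral, so that the irreducible components of $F_{\overline{K}}\otimes_{\overline{K}}\overline{K'}$ are exactly the (one-dimensional, integral) base changes of those of $F_{\overline{K}}$. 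For the ordinary double points: since smoothness is stable under base change and descends along the faithfully flat map $\Spec\overline{K'}\to\Spec\overline{K}$, the non-smooth locus of $(F_{K'})_{\overline{K'}}$ is the preimage of the finitely many nodes of $F_{\overline{K}}$; over such a node $p$, which is $\overline{K}$-rational, there is a single point $p'$, and $\widehat{\mathcal O}_{(F_{K'})_{\overline{K'}},p'}$ is obtained from $\widehat{\mathcal O}_{F_{\overline{K}},p}\cong\overline{K}[[x,y]]/(xy)$ by the same extension, hence is $\cong\overline{K'}[[x,y]]/(xy)$, again a node. (Alternatively, invoke the \'etale-local model $\Spec k[x,y]/(xy)$ of a node, using that \'etale morphisms base change to \'etale morphisms.) So $F_{K'}$ is a nodal curve over $K'$.

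For the split case I would verify the two additional conditions. The irreducible components $F_i$ of $F$ are geometrically irreducible and smooth over $K$; geometric irreducibility and smoothness being stable under base change, and the $(F_i)_{K'}$ being again the irreducible components of $F_{K'}$ (there are no containments among distinct ones, by faithfully flat descent), it follows that the components of $F_{K'}$ are geometrically irreducible and smooth over $K'$. And the singular points of $F_{K'}$ lie, as noted, over the non-smooth locus of $F/K$, which consists of finitely many $K$-rational points; the fibre of $\Spec K'\to\Spec K$ over a $K$-rational point is $\Spec(K\otimes_KK')=\Spec K'$, a single $K'$-rational point, so every singular point of $F_{K'}$ is $K'$-rational. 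Therefore $F_{K'}$ is a split nodal curve over $K'$, and the lemma follows.

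I expect the one step requiring genuine care to be the preservation of ``at worst ordinary double points'': one must rule out that new singularities appear (this is exactly faithfully flat descent of smoothness) and check that each node remains a node (handled either by the completed local ring $\overline{K}[[x,y]]/(xy)$ or by the \'etale-local model); the remaining assertions are formal properties of base change along a field extension.
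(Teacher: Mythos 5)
Your proof is correct and follows the same route as the paper's: reduce to the case of base change of a single fibre along a field extension via the isomorphism $X'_{y'}\cong X_y\times_{k(y)}k(y')$, then check that the defining properties persist. The only difference is that you spell out in detail why connectedness, reducedness, pure dimension one, and the node condition survive passing from $\overline{K}$ to $\overline{K'}$, whereas the paper asserts this in a single sentence; both treatments of the split case (singular points of $F_{K'}$ map to $K$-rational singular points of $F$, components stay geometrically irreducible and smooth) coincide.
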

\begin{proof}
The family $f'$ is flat and projective as these properties are preserved under base change. 
Let $y'\in Y'$ be a point and $y\in Y$ be its image. Let $K', K$ be the residue fields of $y',y$ 
respectively, and let $F$ be the fibre of $f$ over $y$. Then the fibre of $f'$ over $y'$ is $F_{K'}$, that is, 
$F$ after base change to $K'$. Now if $F$ is a nodal curve over $K$, then 
$F_{K'}$ is also a nodal curve over $K'$ because $F_{\overline{K}}$ being a connected reduced nodal curve implies 
$F_{\overline{K'}}$ is a connected reduced nodal curve. Moreover, if $F$ is a split nodal curve over $K$, then 
$F_{K'}$ is a split nodal curve over $K'$: indeed, each singular point $\eta'$ of $F_{K'}$ maps to a singular point $\eta$ of $F$, and $\eta$ being $K$-rational implies $\eta'$ is $K'$-rational; also, since the irreducible components of $F$ are geometrically irreducible and smooth, the irreducible components of $F_{K'}$ are geometrically irreducible and smooth.
\end{proof}

A family of split nodal curves can be described locally formally as in the next lemma.

\begin{lem}\label{l-nodal-curves-total-space}
Let $f\colon X\to Y$ be a family of split nodal curves where $Y$ is a variety (over $k$ as usual). 
Let $x\in X$ be a closed point, $y=f(x)$, $A={\mathcal{O}}_{Y,y}$ and  $B={\mathcal{O}}_{X,x}$. Then 
\begin{enumerate}
\item if $f$ is smooth at $x$, then there is an open neighbourhood $U$ of $x$ such that $U\to Y$ factors as the composition of 
an \'etale morphism $U\to \A^1_Y$ followed by the projection $\A^1_Y\to Y$;

\item if $f$ is not smooth at $x$, then there exist $\lambda\in \widehat{A}$ and an isomorphism
$$
\widehat{B}\simeq \widehat{A}[[\alpha,\beta]]/(\alpha\beta-\lambda)
$$
of $\widehat{A}$-algebras where $\widehat{A},\widehat{B}$ are completions and $\alpha,\beta$ 
are independent variables; 

\item if $f$ is not smooth at $x$, then the inverse image of the singular locus ${\rm Sing}(f)$ to $\Spec \widehat{B}$ maps onto the vanishing set of $\lambda$ under the morphism $\Spec \widehat{B}\to \Spec \widehat{A}$.  
\end{enumerate}
\end{lem}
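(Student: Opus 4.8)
The plan is to work one closed point at a time, passing to completions, and exploit the fact that a split nodal curve has, by definition, either a smooth point or an ordinary double point with a $k(y)$-rational singular point. For part (1), suppose $f$ is smooth at $x$. Then the fibre $F$ through $x$ is a smooth curve over $k(y)$, so $x$ is a smooth point of the total space and the relative cotangent sheaf $\Omega_{X/Y}$ is locally free of rank one near $x$; choosing a local section $t$ of $\mathcal{O}_X$ near $x$ whose differential $dt$ generates $\Omega_{X/Y}$ at $x$ gives a morphism $U\to \A^1_Y$, $u\mapsto (f(u),t(u))$, which is unramified at $x$ (by the choice of $t$) and flat (source and target both flat over $Y$ with smooth one-dimensional fibres, and the induced map on fibres is unramified between smooth curves hence flat), hence \'etale at $x$; shrinking $U$ makes it \'etale everywhere, and by construction it factors $U\to Y$. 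This is standard and I would keep it brief.

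For part (2), assume $f$ is not smooth at $x$. Then $x$ lies in the singular locus of $f$, and in the fibre $F$ it is an ordinary double point; since $F$ is a split nodal curve, $x$ is $k(y)$-rational. The classical local structure of nodal curves (e.g. via the formal completion of a node, or Deligne–Mumford / de Jong) gives $\widehat{B}/\mathfrak{m}_y\widehat{B}\simeq k(y)[[\alpha,\beta]]/(\alpha\beta)$. The goal is to lift this to a statement over $\widehat{A}$. Since $f$ is flat and $B$ is (by the node description on the special fibre) the quotient of a formally smooth $\widehat{A}$-algebra $\widehat{A}[[\alpha,\beta]]$ by one element, I would argue: $\widehat{B}$ is flat over $\widehat{A}$, its special fibre is $k(y)[[\alpha,\beta]]/(\alpha\beta)$, so by completeness and Nakayama one lifts the generators $\alpha,\beta$ to $\widehat{B}$ and gets a surjection $\widehat{A}[[\alpha,\beta]]\twoheadrightarrow \widehat{B}$; let $J$ be its kernel. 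Modulo $\mathfrak{m}_y$ the kernel is generated by $\alpha\beta$, and by flatness over $\widehat{A}$ together with the local criterion of flatness, a lift of $\alpha\beta$ to $J$ generates $J$ — so $\widehat{B}\simeq \widehat{A}[[\alpha,\beta]]/(g)$ with $g\equiv \alpha\beta \pmod{\mathfrak{m}_y}$. The remaining work is to change coordinates in $\widehat{A}[[\alpha,\beta]]$ so that $g$ becomes exactly $\alpha\beta-\lambda$ for some $\lambda\in\widehat{A}$; write $g = u\alpha\beta + (\text{terms divisible by }\alpha\text{ or }\beta\text{ with coefficients in }\mathfrak{m}_y) + \lambda_0$ with $\lambda_0\in\mathfrak{m}_y\widehat{A}$, note $u$ is a unit, and absorb the lower-order $\alpha$- and $\beta$-linear correction terms by the substitutions $\alpha\mapsto \alpha + (\cdots)$, $\beta\mapsto \beta+(\cdots)$ (Weierstrass-type preparation / a formal implicit-function iteration, which converges in the $\mathfrak{m}_y$-adic topology since the corrections lie in $\mathfrak{m}_y$). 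I expect this coordinate-change step — making the normal form exact rather than just "to first order" — to be the main obstacle; it is essentially the deformation theory of an $A_1$-singularity and should be citable from de Jong [\ref{de-jong-smoothness-semi-stability}] or [\ref{de-jong-nodal-family}], and I would lean on that citation rather than redo the iteration in detail.

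For part (3), with the isomorphism $\widehat{B}\simeq \widehat{A}[[\alpha,\beta]]/(\alpha\beta-\lambda)$ in hand, the relative singular locus ${\rm Sing}(f)$ pulled back to $\Spec\widehat{B}$ is cut out by the vanishing of the relative Jacobian, i.e. by $\partial_\alpha(\alpha\beta-\lambda)=\beta$ and $\partial_\beta(\alpha\beta-\lambda)=\alpha$, so it is $\Spec \widehat{A}[[\alpha,\beta]]/(\alpha,\beta,\alpha\beta-\lambda)=\Spec \widehat{A}/(\lambda)$; the map to $\Spec\widehat{A}$ is then visibly the closed immersion onto the vanishing set $V(\lambda)$, and in particular it is surjective onto $V(\lambda)$. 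This is a direct computation once (2) is established, so the whole lemma rests on the normal-form statement in (2).
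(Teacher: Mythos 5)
Your proposal is correct and takes essentially the same route as the paper, which proves the lemma purely by citation: part (1) to [Liu, \S 6.2.2, Cor.\ 2.11] and parts (2), (3) to [de Jong, \emph{Smoothness, semi-stability and alterations}, 2.23] (see also [Liu, \S 10.3.2, Lemma 3.20]). Your sketches are a reasonable unpacking of what those references establish --- the $\Omega_{X/Y}$-generator argument for (1), the flatness/Nakayama lift plus a Weierstrass-type normalisation for (2) (which you correctly identify as the real work and defer to de Jong), and the Jacobian computation for (3) --- so there is no substantive divergence from the paper's intent.
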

\begin{proof}
(1) 
This follows from  [\ref{Liu}, \S 6.2.2, Corollary 2.11].  (2), (3)
These are proved in [\ref{de-jong-smoothness-semi-stability}, 2.23] (also see [\ref{Liu}, \S 10.3.2, Lemma 3.20]). 
\end{proof}

\subsection{Certain families over toric couples}

\begin{lem}\label{l-fam-over-toric-pairs}
Assume that $(Y,E)$ is a normal toric couple of dimension $d$ and $t_1,\dots,t_d$ are the coordinate functions on the torus $\mathbb{T}_Y=(\A^1\setminus \{0\})^d$. 

\begin{enumerate}
\item Let $\A^1=\Spec k[\alpha]$,  
$
V=Y\times \A^1,
$ 
and $C$ be the inverse image of $E$ plus the vanishing section of $\alpha$. Then $(V,C)$ is a normal toric couple and the projection morphism $(V,C)\to (Y,E)$ is a toric morphism.

\item Let $\lambda\neq 0$ be a character in $t_1,\dots,t_d$ and  $Y^\circ\subset Y$ be the maximal open subset where $\lambda$ is regular. Let $\A^2=\Spec k[\alpha,\beta]$,
$$
X\subset  Y^\circ\times \A^2
$$ 
be the closed subscheme defined by $\Phi:=\alpha\beta-\lambda$, and $D$ be the inverse image of $E$. Then $(X,D)$ is a normal toric couple and the projection morphism $(X,D)\to (Y,E)$ is a toric morphism.
\end{enumerate}
\end{lem}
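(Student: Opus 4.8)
The plan is to exhibit in each case an explicit torus acting on the total space, extending the torus of $Y$, and to identify the boundary divisor $C$ (resp. $D$) with the complement of that torus; the toric morphism statement will then be immediate from the construction. For part (1), the variety $V = Y \times \A^1$ contains the open subset $\mathbb{T}_Y \times (\A^1 \setminus \{0\}) \cong (k^*)^{d+1}$, and the coordinate-wise multiplication action on this $(d+1)$-torus clearly extends to all of $V$ since it extends to $Y$ (because $Y$ is toric) and to $\A^1$ (standard scaling). So $V$ is toric. Normality is clear as a product of normal varieties. The toric boundary of $V$ is the complement of $\mathbb{T}_Y\times(k^*)$, which is exactly $(E\times\A^1)\cup(Y\times\{0\})$, i.e. the inverse image of $E$ plus the vanishing section of $\alpha$; this is $C$. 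The projection $V\to Y$ restricts on tori to the projection $(k^*)^{d+1}\to(k^*)^d$, a homomorphism of algebraic groups, and is equivariant, so it is a toric morphism.

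For part (2) I would first record that $\lambda$, being a character $t^m$ for some $m\in\Z^d$, defines a rational function on $Y$ whose divisor is torus-invariant, so $Y^\circ$ is a torus-invariant open subset, hence itself a normal toric variety containing $\mathbb{T}_Y$. Now consider inside $Y^\circ\times(\A^1\setminus\{0\})^2$ the locus where $\alpha\beta=\lambda$. On the torus $\mathbb{T}_Y$ the function $\lambda$ is nowhere zero, so the map $(y,\beta)\mapsto(y,\lambda(y)/\beta,\beta)$ identifies $\mathbb{T}_Y\times(k^*)$ with the open subset $X\cap(\mathbb{T}_Y\times(k^*)^2)$ of $X$; this gives a $(d+1)$-dimensional torus $\mathbb{T}_X$ open in $X$. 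The key point is then to check that the action of $\mathbb{T}_X$ on itself extends to $X$: concretely, identify $\mathbb{T}_X$ with $(k^*)^{d+1}$ via coordinates $(t_1,\dots,t_d,\beta)$ and note $\alpha = \lambda/\beta = t^m\beta^{-1}$, so the whole coordinate ring of $X$ is generated over $k$ by the characters $t_1^{\pm1},\dots,t_d^{\pm1}$ (on the torus) together with the monomials $\alpha = t^m\beta^{-1}$ and $\beta$, which are themselves characters of $\mathbb{T}_X$; hence $X$ is the affine toric variety associated to the semigroup generated by these characters, and the action extends. Equivalently, one writes down the cone/fan: $X$ is $\Spec k[S]$ where $S\subset\Z^{d+1}$ is the submonoid generated by $\pm e_1,\dots,\pm e_{d-1}$ (say, after reordering so $\lambda$ uses the last coordinate), $e_d$ (which is $\beta$ up to the change of basis absorbing $m$) and $m - e_{d+1}\mapsto\alpha$, from which normality of $X$ follows by checking $S$ is saturated, or alternatively one argues normality directly from the defining equation $\alpha\beta-\lambda$ being a product of variables in suitable toric coordinates on $Y^\circ\times\A^2$.

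With $X$ recognised as a normal toric variety, the identification of $D$ with the toric boundary proceeds as in part (1): the torus $\mathbb{T}_X$ is precisely $X\cap(\mathbb{T}_Y\times(k^*)^2)$, so its complement in $X$ consists of the points where $\alpha=0$, where $\beta=0$, or where some $t_i$ degenerates; but on $X$ the equation $\alpha\beta=\lambda$ with $\lambda$ a nonzero character forces: $\alpha=0$ iff $\lambda=0$, and $\beta=0$ iff $\lambda=0$, so (on $Y^\circ$, where $\lambda$ has no pole) the locus $\alpha=0$ or $\beta=0$ maps into the zero divisor of $\lambda$, which is a torus-invariant divisor supported on $E\cap Y^\circ$; hence the toric boundary of $X$ is exactly the inverse image of $E$, which is $D$. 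Finally the projection $X\to Y$ restricts to $\mathbb{T}_X\to\mathbb{T}_Y$, $(t,\beta)\mapsto t$, a group homomorphism, and is equivariant, so it is a toric morphism. I expect the main obstacle to be the bookkeeping in the second paragraph — choosing coordinates on $\Z^{d+1}$ so that $\alpha$, $\beta$, and the $t_i$ are visibly characters and the associated monoid is visibly saturated — rather than anything conceptually deep; once the toric structure on $X$ is pinned down, everything else is formal.
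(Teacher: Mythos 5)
Your part (1) is fine. For part (2) there is a genuine gap: you implicitly treat $Y^\circ$ as if it were the torus $\mathbb{T}_Y$ (or at least an affine toric variety whose coordinate ring is the Laurent polynomial ring $k[t_1^{\pm1},\dots,t_d^{\pm1}]$), but $Y^\circ$ is only a torus-invariant open subset of $Y$ and in general both contains boundary divisors and fails to be affine. Your claim that ``the whole coordinate ring of $X$ is generated over $k$ by the characters $t_1^{\pm1},\dots,t_d^{\pm1}$ together with $\alpha$ and $\beta$'' is false as soon as $Y^\circ$ strictly contains $\mathbb{T}_Y$: the $t_i^{-1}$ need not be regular on $Y^\circ$, so the ring of regular functions (on an affine chart, say) is a semigroup ring $k[S_\sigma]$ and not a Laurent ring. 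Consequently ``$X=\Spec k[S]$'' only describes the dense open part of $X$ lying over $\mathbb{T}_Y$, not $X$ itself, and the normality and toricity of $X$ do not follow from what you wrote. Your proposed description of the monoid $S$ also has internal index problems ($e_1,\dots,e_{d-1}$, $e_d$, $e_{d+1}$ in a rank-$(d+1)$ lattice do not line up as stated), and you explicitly defer the saturation check, which is exactly where the substance would have to be.

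To repair this you would either have to work affine-chart by affine-chart on $Y^\circ$, replacing the Laurent ring by the semigroup ring of each chart and then verifying that $k[S_\sigma][\alpha,\beta]/(\alpha\beta-\lambda)$ is a (saturated) semigroup ring, or take a different route, which is what the paper does. The paper first shows $X\to Y^\circ$ is flat by comparing Hilbert polynomials in a projective completion, then deduces integrality of $X$ from integrality of the general fibre. For normality it invokes Serre's criterion: $X$ is Cohen--Macaulay as a hypersurface in the Cohen--Macaulay variety $Y^\circ\times\A^2$, and $R_1$ follows from a dimension count (a codimension-one singular component $S$ would dominate a divisor $T\subset Y^\circ$ with zero-dimensional general fibres, giving $\dim S=\dim T<\dim Y^\circ=\dim S$, a contradiction). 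Only after normality is in hand does the paper construct the torus $\mathbb{T}_X=X\cap(\mathbb{T}_{Y^\circ}\times\mathbb{T}_{\A^2})$, show the action extends, and identify $D$ with the complement --- essentially the scaffolding you also use, but without needing any monoid presentation of $X$ at all. Separately, a minor slip: ``$\alpha=0$ iff $\lambda=0$'' should be ``$\alpha=0$ implies $\lambda=0$'' (and likewise for $\beta$); only the one-way implication is used, so this does not break the argument, but the biconditional is false.
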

\begin{proof}

(1) 
This follows from standard toric geometry.
(2)
Here, by a character we mean 
$$
\lambda=t_1^{m_1}\cdots t_{d}^{m_d}
$$ 
where $m_1,\dots,m_d$ are integers (negative integers are allowed). This corresponds to the element $(m_1,\dots,m_d)$ in the character lattice of $Y$. First, we show that $Y^\circ$ is a toric variety. Clearly $Y^\circ$ includes the torus $\mathbb{T}_Y$. Moreover, since $Y$ is normal, $Y\setminus Y^\circ$ is the union of the irreducible components of the divisor $\Div(\lambda)$ on $Y$ with negative coefficients, so  $Y\setminus Y^\circ$ is either empty or a closed subset of pure codimension one. In the first case, $Y^\circ=Y$. In the latter case, $Y\setminus Y^\circ$ is a union of some toric prime divisors, hence its complement is torus-invariant, so it is a toric variety. 
 
Let $g$ be the projection morphism $X\to Y^\circ$. The fibre of $g$ over a closed point $u\in Y^\circ$ is given by the equation $\alpha\beta-\lambda(u)$ on $\A^2$. This fibre is smooth iff $\lambda(u)\neq 0$. Moreover, $g$ is flat: consider the closed subscheme $W$ of $Y^\circ\times \PP^2$ defined by $\alpha\beta-\lambda\gamma^2$ where $\PP^2=\Proj k[\alpha,\beta,\gamma]$; the fibre of $W\to Y^\circ$ over $u$ (closed or not) is given by the equation $\alpha\beta-\lambda(u)\gamma^2$ which is a conic, hence the Hilbert polynomials of these fibres are the same; so we can apply [\ref{Hartshorne}, Chapter III, Theorem 9.9] to deduce that $W\to Y^\circ$ is flat; this in turn implies $g$ is flat.

The general fibres of $g$ are irreducible and smooth (so integral) as they are isomorphic to $\A^1\setminus\{0\}$. 
Thus $X$ is integral [\ref{Liu}, \S 4.3.1, Proposition 3.8], hence it is a variety. 

Next, we will argue that $X$ is normal. Indeed, since $Y^\circ\times \A^2$ is toric and normal, it is Cohen-Macauly, so  
$X$ is Cohen-Macaulay as it is defined by one equation. Therefore, it is enough to show that $X$ is regular in codimension one, by Serre's criterion. Assume not, and let $S$ be a codimension one component of the singular locus of $X$. Then $\dim S=\dim Y^\circ$.
Since $g$ is generically smooth, $S\to Y^\circ$ is not dominant.
 Moreover, since the fibres of $g$ are curves, $S$ dominates a prime divisor $T$ on $Y^\circ$ which can be seen by counting dimensions. 
However, the fibres of $g$ are reduced curves, so $S$ contains at most finitely many points of each fibre of $g$ over each smooth point of $Y^\circ$. Since $Y^\circ$ is normal, $Y^\circ$ is smooth near the generic point of $T$, so the general fibres of $S\to T$ are zero-dimensional. Thus 
$$
d=\dim S=\dim T<\dim Y^\circ=d,
$$ 
a contradiction. Thus we have shown that $X$ is normal.

Now we show that $X$ is a toric variety and that $g\colon X\to Y^\circ$ is a toric morphism. Consider the tori $\mathbb{T}_{Y^\circ}$ and $\mathbb{T}_{\A^2}$. Then $\mathbb{T}_{Y^\circ}\times \mathbb{T}_{\A^2}$ is the torus of $Y^\circ\times \A^2$. Let 
$$
\mathbb{T}_{X}:=X\cap (\mathbb{T}_{Y^\circ}\times \mathbb{T}_{\A^2}).
$$ 
Given each closed point $(u,a,b)\in \mathbb{T}_{X}$, we have $ab-\lambda(u)=0$ but $\lambda(u)\neq 0$ as $\lambda$ is a character and $u\in \mathbb{T}_{Y^\circ}$. Thus $(u,a,b)$ is uniquely determined by $(u,a)$. This shows that $\mathbb{T}_{X}$ is isomorphic (as varieties) to $\mathbb{T}_{Y^\circ}\times \mathbb{T}_{\A^1}$, the torus of dimension $d+1$.
Moreover, since $\lambda$ is a character, $\mathbb{T}_{X}$ is an algebraic subgroup of $\mathbb{T}_{Y^\circ}\times \mathbb{T}_{\A^2}$ and its multiplicative structure inherited from $\mathbb{T}_{Y^\circ}\times \mathbb{T}_{\A^2}$ is compatible with that of $\mathbb{T}_{Y^\circ}\times \mathbb{T}_{\A^1}$. Therefore, the said isomorphism is an isomorphism of tori.  

On the other hand, the action of $\mathbb{T}_{X}$ on itself extends to its closure which is $X$ because $\mathbb{T}_{X}$ acts on $Y^\circ \times \A^2$ as it is a subgroup of $\mathbb{T}_{Y^\circ}\times \mathbb{T}_{\A^2}$. This shows that $X$ is toric. Moreover, the map on tori 
$\mathbb{T}_{X}\to \mathbb{T}_{Y^\circ}$, given by projection, is a group homomorphism of tori. Additionally, $g$ is equivariant with respect to the action of these tori because the projection $Y^\circ\times \A^2\to Y^\circ$ is equivariant with respect to $\mathbb{T}_{Y^\circ}\times \mathbb{T}_{\A^2}\to \mathbb{T}_{Y^\circ}$. 
Thus $g$ is a toric morphism.

Now since $g\colon X\to Y^\circ$ is toric and $Y^\circ$ is a toric open subset of $Y$, the induced morphism $f\colon X\to Y$ is toric. 
For each closed point $u\in \mathbb{T}_{Y}$, every point $(u,a,b)$ in $f^{-1}\{u\}$ is contained in $\mathbb{T}_{X}$ as $ab=\lambda(u)\neq 0$. Thus $\mathbb{T}_{X}=f^{-1}\mathbb{T}_{Y}$ and $D=f^{-1}E$ is the complement of $\mathbb{T}_X$. 
\end{proof}

\subsection{Families of nodal curves over toroidal couples}

In this subsection we study families of split nodal curves over a base that has a toroidal structure. We aim to get a toroidal structure on the total space.

\begin{lem}\label{l-Cartier-near-point-toric}
Assume $(Y,E)$ is a normal toric couple, $y\in Y$ is a closed point, and $H\ge 0$ is a Weil divisor with $\Supp H\subseteq E$. If $H$ is Cartier near $y$, then there is a character $\gamma$ such that $H=\Div(\gamma)$ on some torus-invariant open neighbourhood of $y$. 
\end{lem}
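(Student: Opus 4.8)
The plan is to reduce everything to standard toric facts on a torus-invariant affine chart. First note that, since $\Supp H\subseteq E$ and $E$ is the complement of the torus $\mathbb{T}_Y$ in $Y$, each irreducible component of $H$ is a toric prime divisor; so, denoting the toric prime divisors of $Y$ by $D_\rho$, we may write $H=\sum_\rho a_\rho D_\rho$ with $a_\rho\ge 0$, and in particular $H$ is torus-invariant. Let $\sigma$ be the cone in the fan of $Y$ whose orbit $O_\sigma$ contains $y$, and replace $Y$ by the affine toric open subset $U_\sigma$: this is a torus-invariant open neighbourhood of $y$, its fan consists precisely of the faces of $\sigma$, and $y\in O_\sigma$.

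The substantive step is to upgrade the hypothesis that $H$ is Cartier near $y$ to the statement that $H$ is Cartier on all of $U_\sigma$. Let $Z\subseteq U_\sigma$ be the locus where $H$ is not Cartier, i.e. where the rank-one reflexive sheaf $\mathcal{O}_{U_\sigma}(H)$ fails to be invertible; this is closed, and it is torus-invariant because every element of $\mathbb{T}_Y$ acts on $U_\sigma$ by an automorphism fixing $H$. Hence $Z$ is a union of orbit closures. By the orbit--cone correspondence [\ref{Cox-etal}, 3.2.6], in $U_\sigma$ the closure of every orbit contains the orbit $O_\sigma$ of the maximal cone. Thus, were $Z$ nonempty, it would contain $O_\sigma$ and hence the point $y$, contradicting that $H$ is Cartier near $y$. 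Therefore $Z=\varnothing$, so $H$ is Cartier on $U_\sigma$.

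To finish, one invokes the standard fact that on an affine toric variety a torus-invariant Cartier divisor is principal and cut out by a character [\ref{Cox-etal}, 4.2.8]: there is an $m$ in the character lattice with $H=\Div(\chi^m)$ on $U_\sigma$. Taking $\gamma=\chi^m$ and the torus-invariant open neighbourhood $U_\sigma$ of $y$ yields the claim; note moreover that $H\ge 0$ forces $\langle m,u_\rho\rangle\ge 0$ for every ray $\rho$ of $\sigma$, so $\gamma$ is in fact regular on $U_\sigma$, although this is not needed for the statement. The main obstacle is precisely the second step: $y$ need not be the distinguished point of $U_\sigma$, so one cannot directly cite ``Cartier at the fixed point implies Cartier'' for affine toric varieties; the orbit-closure argument circumvents this. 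Alternatively, since $\mathbb{T}_Y$ acts transitively on $O_\sigma$ one may translate $y$ to the distinguished point $x_\sigma$ and use that $x_\sigma$ lies in every orbit closure of $U_\sigma$.
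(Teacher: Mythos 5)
Your proposal is correct and takes essentially the same approach as the paper: both arguments observe that $\Supp H\subseteq E$ forces $H$ to be torus-invariant, use this to upgrade ``Cartier near $y$'' to ``Cartier on a torus-invariant affine neighbourhood $U_\sigma$ of $y$,'' and then invoke the standard fact that torus-invariant Cartier divisors on affine toric varieties are divisors of characters. The only cosmetic difference is that you argue via the non-Cartier locus and the orbit--cone correspondence (every orbit closure in $U_\sigma$ contains $O_\sigma\ni y$), whereas the paper translates the Cartier locus by the torus action and takes the union; both implement the same idea.
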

\begin{proof}
Fix an open neighbourhood $U\subset Y$ of $y$ on which $H$ is Cartier. Each closed point of the torus $\mathbb{T}_Y$ gives an automorphism $g\colon Y\to Y$. Then the union of all the $g(U)$ is torus-invariant. Moreover, since $\Supp H\subseteq E$, we see that $H$ is torus-invariant, so $H=g^*H$. Thus $H$ is Cartier on each $g(U)$, hence is Cartier on their union. Thus replacing $Y$ with the union, we can assume $H$ is Cartier everywhere. Now we can apply  [\ref{Cox-etal}, Proposition 4.2.2] to some torus-invariant affine open neighbourhood of $y$.  
\end{proof}

\begin{prop}\label{p-fam-nod-curves-over-toric}
Let $(X,D)$ and $(Y,E)$ be couples, and $f\colon X\to Y$ be a family of split nodal curves.
Assume that 
\begin{itemize}
\item $(Y,E)$ is toroidal, 

\item $f$ is smooth over $Y\setminus E$, 

\item the horizontal components of $D$ are disjoint sections of $f$ contained in the smooth locus of $f$, and 

\item the vertical part of $D$ is equal to $f^{-1}E$.
\end{itemize}
Then $(X,D)$ is a toroidal couple and $(X,D)\to (Y,E)$ is a toroidal morphism.
\end{prop}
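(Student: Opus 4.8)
The plan is to prove the statement by a local formal computation at each closed point $x \in X$, matching the completed local ring of $(X,D)$ at $x$ with that of a normal toric couple, compatibly with the map to $(Y,E)$. Set $y = f(x)$, and let $(V,B), v$ be a local toric model of the toroidal couple $(Y,E)$ at $y$. By Lemma \ref{l-nodal-curves-base-change}, after base change along a common \'etale neighbourhood of $Y,y$ and $V,v$ we retain a family of split nodal curves, so working formally we may assume $\widehat{\mathcal O}_{Y,y} \simeq \widehat{\mathcal O}_{V,v}$ with $E$ corresponding to the toric boundary $B$. There are two cases according to whether $f$ is smooth at $x$.

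First, if $f$ is smooth at $x$: by Lemma \ref{l-nodal-curves-total-space}(1), \'etale-locally $X \to Y$ is $\A^1_Y \to Y$ near $x$, and since the horizontal part of $D$ consists of disjoint sections inside the smooth locus, near $x$ either $D$ is just $f^{-1}E$ (the pullback of the toric boundary of $V$) or $D = f^{-1}E$ together with one section. In the first situation $(X,D)$ near $x$ is formally isomorphic to $(V \times \A^1, B \times \A^1) \to (V, B)$, which is a toric couple with a toric projection by Lemma \ref{l-fam-over-toric-pairs}(1). In the second situation, the section through $x$ can be arranged (after an \'etale change of the $\A^1$-coordinate, using that the section lies in the smooth locus away from $E$ — or rather that the section is disjoint from $f^{-1}E$ near $x$ since $f$ is smooth there) to be the zero section of $\A^1$; so again we land in the setting of Lemma \ref{l-fam-over-toric-pairs}(1), where $C$ is exactly the pullback of $E$ plus the zero section. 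Either way $(X,D) \to (Y,E)$ is toroidal at $x$.

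Second, if $f$ is not smooth at $x$: then $x \notin f^{-1}E$ would force $f$ smooth over a point outside $E$, contradicting the hypothesis, so $y \in E$. By Lemma \ref{l-nodal-curves-total-space}(2), $\widehat{\mathcal O}_{X,x} \simeq \widehat{A}[[\alpha,\beta]]/(\alpha\beta - \lambda)$ for some $\lambda \in \widehat{A} = \widehat{\mathcal O}_{Y,y}$. Part (3) of that lemma identifies the vanishing locus of $\lambda$ with (the image of) the non-smooth locus of $f$; on the other hand the hypotheses say $f$ is smooth precisely over $Y \setminus E$, so $\Div(\lambda)$ is supported on $\widehat E$, and in particular $\lambda$ is a non-zero-divisor whose divisor is supported in the toric boundary. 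Transporting to the toric model $V$, the element $\lambda$ defines an effective (or its divisor is, up to sign, supported on $B$) Weil divisor supported in $B$; shrinking and using Lemma \ref{l-Cartier-near-point-toric}, on a torus-invariant affine neighbourhood of $v$ the function $\lambda$ — being a non-zero-divisor defining a Cartier divisor with support in $B$ — equals a character $t_1^{m_1}\cdots t_d^{m_d}$ up to a unit; absorbing the unit into the coordinates $\alpha,\beta$ (this is the formal change of variables), we get $\widehat{\mathcal O}_{X,x} \simeq \widehat{\mathcal O}_{V^\circ,v}[[\alpha,\beta]]/(\alpha\beta - \lambda)$ with $\lambda$ a character. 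This is exactly the completed local ring of the normal toric variety $X$ of Lemma \ref{l-fam-over-toric-pairs}(2), at the relevant point, and the vertical part of $D$, which equals $f^{-1}E$ by hypothesis, corresponds to the inverse image of $B$, i.e. the toric boundary restricted appropriately. (One checks no horizontal component of $D$ passes through $x$: horizontal components lie in the smooth locus of $f$, while $x$ is a node.) Hence $(X,D)$ is toroidal at $x$ and $(X,D) \to (Y,E)$ is a toroidal morphism there.

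The main obstacle I anticipate is the bookkeeping in the non-smooth case: one must verify that $\lambda \in \widehat{\mathcal O}_{Y,y}$ really comes from (is formally equivalent to) a genuine character on the toric model — this uses Lemma \ref{l-nodal-curves-total-space}(3) to pin down $\Div(\lambda)$ inside the boundary, Lemma \ref{l-Cartier-near-point-toric} to promote "Cartier with support in $E$" to "a character", and then a clean argument that multiplying $\lambda$ by a unit in $\widehat{\mathcal O}_{Y,y}$ does not change the formal isomorphism type of $\widehat{A}[[\alpha,\beta]]/(\alpha\beta-\lambda)$ (rescale $\alpha$). A secondary subtlety is making the \'etale-local models of $f$ from Lemma \ref{l-nodal-curves-total-space} compatible with the \'etale-local toric model of $(Y,E)$ simultaneously, which is handled by passing to completions throughout and invoking Artin approximation [\ref{Artin}, Corollary 2.6] only at the end to produce the common \'etale neighbourhood.
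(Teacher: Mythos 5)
Your proposal follows the paper's own proof quite closely: reduce to a local formal computation at each closed point, split into the cases where $f$ is smooth or not smooth at $x$, and in the nodal case identify the nodal parameter $\lambda$ with a character on the toric model up to a unit so as to invoke Lemma~\ref{l-fam-over-toric-pairs}. The overall strategy and the use of Lemmas~\ref{l-nodal-curves-total-space}, \ref{l-Cartier-near-point-toric} and \ref{l-fam-over-toric-pairs} match the paper.

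The one place where you have flagged a potential obstacle but not actually closed the gap is the descent of the divisor of $\lambda$ from the formal completion to the toric model. You write that $\lambda$ ``defines a Cartier divisor with support in $B$'' and want to feed this into Lemma~\ref{l-Cartier-near-point-toric}; but $\lambda\in\widehat{\mathcal O}_{Y,y}$ only defines a Cartier divisor $\widehat{H}'$ on $\Spec\widehat{\mathcal O}_{V,v}$, whereas Lemma~\ref{l-Cartier-near-point-toric} takes as input an honest Weil divisor on the toric variety $V$. The missing steps are: (i) since $V$ is a normal toric variety, each boundary component $E'_i$ is normal, so its formal completion $\widehat{E}'_i$ is still a prime divisor (or empty), and therefore the condition $\Supp\widehat{H}'\subseteq\widehat{E}'$ forces $\widehat{H}'=\sum l_i\widehat{E}'_i$, which is the completion of $H':=\sum l_iE'_i$; (ii) one must then invoke Lemma~\ref{l-formally-cartier} to deduce that $H'$ is Cartier near $v$ from the fact that $\widehat{H}'$ is Cartier, and only at this point does Lemma~\ref{l-Cartier-near-point-toric} apply. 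You also have a small imprecision in the smooth case with no horizontal component through $x$: the toric boundary of $V\times\A^1$ must include the vanishing section of $\A^1$, so the model point $x'$ should be placed off that section (for instance at $(v,1)$), rather than using the couple $(V\times\A^1, B\times\A^1)$ with the truncated boundary.
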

\begin{proof} 
\emph{Step 1}. 
Let $x\in X$ be a closed point and $y\in Y$ be its image. Since $(Y,E)$ is toroidal, 
there is a local toric model $(Y',E'),y'$ of $(Y,E),y$ where $Y'$ is normal. Since 
$(Y',E')$ is a toric couple and $Y'$ is normal, each component of $E'$ is normal.
Let $A={\mathcal{O}}_{Y,y}$, $B={\mathcal{O}}_{X,x}$, and $A'={\mathcal{O}}_{Y',y'}$. In the following steps, we will construct a local toric model $(X',D'),x'$ of $(X,D),x$, over $(Y',E'),y'$.

\emph{Step 2}.
First, assume $f$ is not smooth at $x$, hence $x$ is a node on the fibre over $y$. 
Then by Lemma \ref{l-nodal-curves-total-space}, 
$$
\widehat{B}\simeq \widehat{A}[[\alpha,\beta]]/(\alpha\beta-\lambda)
$$
for some $\lambda\in \widehat{A}$. Let $\widehat{H}$ be the effective Cartier divisor on 
$\widehat{Y}=\Spec \widehat{A}$ defined by $\lambda$. By Lemma \ref{l-nodal-curves-total-space}, the inverse image of the singular locus ${\rm Sing}(f)$ to $\widehat{X}=\Spec \widehat{B}$ maps onto $\Supp \widehat{H}$ under the morphism $\widehat{X}\to \widehat{Y}$. Moreover, since $f$ is smooth over $Y\setminus E$, we deduce that 
${\rm Sing}(f)$ maps into $E$, hence $\Supp \widehat{H}\subseteq \widehat{E}$  
where $\widehat{E}$ is the divisor on $\widehat{Y}$ determined by $E$. In particular, $\lambda\neq 0$ because 
$\widehat{E}$ is a proper subset of $\widehat{Y}$. 

On $Y'$, write $E'=\sum E_i'$ where $E_i'$ are the irreducible components. Since $Y'$ is toric and normal, $E_i'$ is normal. Consider $\widehat{E}'=\sum \widehat{E}_i'$ on 
 $\widehat{Y}'=\Spec \widehat{A'}$. Since $E_i'$ is normal, $\widehat{E}_i'$ is normal (see the proof of Lemma \ref{l-toroidal-couple-lc}). Thus $\widehat{E}_i'$ is a prime divisor if $y'\in E_i'$ and $\widehat{E}_i'=0$ otherwise.
 
\emph{Step 3}. 
Now since $(Y,E),y$ and $(Y',E'),y'$ are formally isomorphic, there is an isomorphism $\widehat{Y}\to \widehat{Y}'$ mapping $\widehat{E}$ to $\widehat{E}'$.  So 
$\widehat{H}$ corresponds to an effective Cartier divisor $\widehat{H}'$ on 
 $\widehat{Y}'$ such that $\Supp \widehat{H}'\subseteq \widehat{E}'$. From this we deduce that 
 $\widehat{H}'=\sum l_i\widehat{E}_i'$ for certain non-negative integers $l_i$ because for each $i$, $\widehat{E}_i'$ is a prime divisor or is zero. Then 
 $\widehat{H}'$ is the divisor associated to $H':= \sum l_i{E}_i'$. By Lemma \ref{l-formally-cartier}, 
 $H'$ is Cartier near $y'$. Applying Lemma \ref{l-Cartier-near-point-toric}, we see that, near $y'$,
$H'=\Div(\gamma)$ for some character $\gamma$ on $Y'$. Then we can assume $H'=\Div(\gamma)$ holds on the regular locus $Y'^\circ$ of $\gamma$.

Consider $\lambda,\gamma$ as elements of $\widehat{A'}$. 
Since both $\lambda,\gamma$ define the same Cartier divisor $\widehat{H}'$ on $\widehat{Y}'$,
we have $\lambda=\gamma \rho$ in $\widehat{A'}$  for some invertible element $\rho\in \widehat{A'}$.
Replacing $\alpha$ with $\alpha/\rho$, we may assume $\lambda=\gamma$ in $\widehat{A'}$. From now on we will use $\lambda$ instead of $\gamma$.

\emph{Step 4}.
Let $X'$ be the closed subscheme of  $Y'^\circ\times \A^2$ defined by the equation $\alpha\beta-\lambda$ where $\alpha,\beta$ are considered as coordinate variables on $\A^2$.
Let $f'\colon X'\to Y'$ be the induced morphism, and $D'$ be the inverse image of $E'$. By Lemma \ref{l-fam-over-toric-pairs}, $(X',D')\to (Y',E')$ is a toric morphism of normal toric couples. 
The general fibres of $f'$ are isomorphic to $\A^1\setminus\{0\}$.

Since the fibre of $f$ over $y$ is singular by assumption in Step 2, $\lambda$ vanishes at the closed point of $\widehat{Y}\simeq \widehat{Y}'$, so it also 
vanishes at $y'$, hence the fibre of $f'$ over $y'$ is also singular. Let $x'\in X'$ be the node of the fibre over $y'$. Then $x'=(y',(0,0))$ and
$$
\widehat{\mathcal{O}}_{X',x'}\simeq \widehat{\mathcal{O}}_{Y',y'}[[\alpha,\beta]]/(\alpha\beta-\lambda) 
\simeq \widehat{\mathcal{O}}_{Y,y}[[\alpha,\beta]]/(\alpha\beta-\lambda)
\simeq \widehat{\mathcal{O}}_{X,x}.
$$
Moreover, the ideal of $D'$ in $\widehat{\mathcal{O}}_{X',x'}$ corresponds to the ideal of $D$ in $\widehat{\mathcal{O}}_{X,x}$ because $D'=f'^{-1}E'$ by definition and $D=f^{-1}E$ near $x$ by assumption (recall that no horizontal component of $D$ passes through $x$ because such components are contained in the smooth locus of $f$, and the vertical part of $D$ is $f^{-1}E$,  by assumption), and because the ideals of $E'$ and $E$ correspond via the given  isomorphism $\widehat{\mathcal{O}}_{Y',y'}\simeq \widehat{\mathcal{O}}_{Y,y}$.
Therefore,
$$
(X',D'),x' \to (Y',E'),y'
$$ 
is a local toric model of 
$$
(X,D),x \to (Y,E),y.
$$

\emph{Step 5}.
Now assume $f$ is smooth at $x$. Then by Lemma \ref{l-nodal-curves-total-space}, 
there is a neighbourhood $U$ of $x$ such that the induced morphism $U\to Y$ factors as an 
\'etale morphism $U\to \A^1_Y$ followed by the projection $\A^1_Y\to Y$. Let
$$
X'=\A^1_{Y'}=Y'\times \A^1,
$$ 
let $f'\colon X'\to Y'$ be the projection, and let $D'\subset X'$ be the inverse image of $E'$ plus the section defined by the vanishing of $\alpha$ where $\A^1=\Spec k[\alpha]$ in both $\A^1_Y=Y\times \A^1$ and $\A^1_{Y'}=Y'\times \A^1$. Then by Lemma \ref{l-fam-over-toric-pairs},
$(X',D')\to (Y',E')$ is a toric morphism of normal toric couples. 

Assume that $x$ does not belong to any horizontal component of $D$. Then we can choose the map $U\to \A^1_Y=Y\times \A^1$ so that $x$ maps to $(y,1)$. Let $x'=(y',1)\in X'$. Then 
$$
\widehat{\mathcal{O}}_{X',x'}\simeq \widehat{\mathcal{O}}_{X,x}
$$ 
and the ideal of $D'$ in $\widehat{\mathcal{O}}_{X',x'}$ corresponds to the ideal of $D$ 
in $\widehat{\mathcal{O}}_{X,x}$ because $D=f^{-1}E$ near $x$ and $D'=f^{-1}E'$ near $x'$. 
Therefore,  
$$
(X',D'),x' \to (Y',E'),y'
$$  
is a local toric model of 
$$
(X,D),x \to (Y,E),y.
$$ 

Assume $x$ belongs to a horizontal component $T$
of $D$. By assumption, $T$ is unique (containing $x$) and $T$ is a section of $f$ contained in the smooth locus of $f$. 
Then $U\cap T$ is mapped isomorphically onto an open subset of $Y$. Replacing $Y$ with this open subset, we can assume that $U\cap T$ is mapped isomorphically onto $Y$, hence in particular, $U\cap T=T$. 
Now $U\to \A^1_Y$ maps $T$ onto a section of $\A^1_Y\to Y$. Moreover, we can assume that this section of $\A^1_Y\to Y$ is the vanishing section of $\alpha$: indeed, we can assume $Y$ is affine, so each section of $\A^1_Y$ corresponds to a surjection $k[Y][\alpha]\to k[Y]$ 
which is the identity on $k[Y]$; this surjection is determined by sending $\alpha$ to an element $\sigma$; so the kernel of the map is generated by $\alpha-\sigma$, and the ideal of 
the section is generated by $\alpha-\sigma$; changing the variable $\alpha$ to $\alpha+\sigma$ on $Y\times \A^1$, we can assume the ideal of the section is generated by $\alpha$.

By the previous paragraph, $U\to \A^1_Y$ maps $x$ to $(y,0)$. In particular, $\widehat{\mathcal{O}}_{X,x}$ is isomorphic to $\widehat{\mathcal{O}}_{Y,y}[[\alpha]]$ (cf. [\ref{Matsumura}, Exercise 8.6]). 
Now let $x'=(y',0)$. Then again 
$$
\widehat{\mathcal{O}}_{X',x'}\simeq \widehat{\mathcal{O}}_{Y',y'}[[\alpha]]\simeq  
\widehat{\mathcal{O}}_{Y,y}[[\alpha]]\simeq  \widehat{\mathcal{O}}_{X,x}
$$ 
 and we can check that the ideal of $\widehat{D'}$ corresponds to the ideal of $\widehat{D}$ because $D$ on $X$ corresponds to the union of the inverse image of $E$ and the section of $\alpha$ on $Y\times \A^1$ which in turn corresponds to $D'$ on $X'$ which is the union of the inverse image of $E'$ and the section of $\alpha$. Note that we are also implicitly using the fact that the section of $\alpha$ on $Y\times \A^1$ is normal, hence its inverse image to $U$ is normal, so it coincides with $T$ near $x$.  
To summarise, we have again shown that 
$$
(X',D'),x' \to (Y',E'),y'
$$  
is a local toric model of 
$$
(X,D),x \to (Y,E),y.
$$  
\end{proof}

\subsection{Good towers of families of nodal curves}\label{ss-tower-family-nodal-curves}
We introduce certain towers of couples as in \ref{ss-tower-couples} but with stronger properties.

(1) A \emph{good tower of families of (split)
nodal curves} 
$$
(V_d,C_d)\to (V_{d-1},C_{d-1})\to \cdots \to (V_1,C_1)
$$ 
consists of couples $(V_i,C_i)$ and morphisms $g_i\colon V_i\to V_{i-1}$ such that 
\begin{itemize}
\item $g_i$ is a family of (split) nodal curves,
\item $g_i$ is smooth over $V_{i-1}\setminus C_{i-1}$, 
\item the horizontal$/V_{i-1}$ components of $C_i$ are disjoint sections of 
$g_i$ contained in the smooth locus of $g_i$, and 
\item the vertical$/V_{i-1}$ part of $C_i$ is equal to $g_i^{-1}C_{i-1}$.
\end{itemize}
Note that we are implicitly assuming that $g_i$ are flat, surjective, and projective. Also the tower above is a tower of couples as defined in \ref{ss-tower-couples}.

(2) Given a tower as in (1), we show that the 
fibre $F_i$ of $V_i\to V_1$ over any closed point $v\in V_1\setminus C_1$ is integral and not contained in $C_i$. Indeed, by definition, $F_2$ is smooth and being a nodal curve it is connected, hence it is irreducible. Also $F_2$ is not contained in $C_2$ because the vertical part of $C_2$ is $g_2^{-1}C_1$ and the horizontal part of $C_2$ is a disjoint union of sections. Inductively, we can assume $F_{i-1}$ is integral and that it is not contained in $C_{i-1}$. 
Since $F_{i-1}$ is not contained in $C_{i-1}$, the general fibres of $F_i\to F_{i-1}$ are smooth and irreducible as $g_i$ is smooth over $V_{i-1}\setminus C_{i-1}$. Therefore, $F_i$ is integral by [\ref{Liu}, \S 4.3.1, Proposition 3.8] as $g_i$ is flat. Moreover, the general fibres of $F_i\to F_{i-1}$ are not contained in $C_i$, so $F_i$ is not contained in $C_i$.

(3) Given a tower as in (1), let $(X_1,E_1)$ be a couple and $X_1\to V_1$ be a morphism whose image is not contained in $C_1$ (but we are not assuming $(X_1,E_1)\to (V_1,C_1)$ to be a morphism of couples). Also assume that $E_1$ and each $C_i$ is the support of an effective Cartier divisor.  Let $X_i=X_1\times_{V_1}V_i$ and let $D_i$ be the union of the inverse images of $E_1$ and $C_i$. Then we show that the induced tower 
$$
(X_d,D_d)\to (X_{d-1},D_{d-1})\to \cdots \to (X_1,D_1)
$$ 
is a good tower of (split) nodal curves. First, each $h_i\colon X_i\to X_{i-1}$ is a family of (split) nodal curves, by Lemma \ref{l-nodal-curves-base-change}, which is smooth over $X_{i-1}\setminus D_{i-1}$. Second, since $X_1$ is not mapped into $C_1$, the general fibres of $X_i\to X_1$ are integral and not contained in $D_i$, by (2). Thus $X_i$ is a variety as $X_i\to X_1$ is flat, and so $(X_i,D_i)$ is a couple.

On the other hand, the horizontal$/X_{i-1}$ part of $D_i$ is the inverse image of the horizontal$/V_{i-1}$ part of $C_i$, so its components are disjoint sections of $h_i$  contained in the smooth locus of $h_i$. Moreover, the vertical$/X_{i-1}$ part of $D_i$ is equal to $h_i^{-1}D_{i-1}$: indeed, the inverse 
image of $D_{i-1}$ is contained in the vertical$/X_{i-1}$ part of $D_i$; conversely, if $L$ is a 
vertical$/X_{i-1}$ component of $D_i$, then either $L$ is a component of the inverse image of $E_1$ in which case $L$ is mapped into $D_{i-1}$, or $L$ is mapped into the vertical$/V_{i-1}$ part of $C_i$ (as the inverse image of the horizontal part of $C_i$ is a disjoint union of sections of $h_i$) in which case $L$ is mapped into $C_{i-1}$, hence again $L$ maps into $D_{i-1}$.

\subsection{Altering a fibration into a good tower of families of nodal curves}\label{ss-altering-family-into-nodals}

\begin{prop}\label{p-tower-nod-curve-fix-fib}
Assume  $(V,C)$ is a couple and $f\colon V\to T$ is a surjective projective morphism. 
Then there exists a commutative diagram of couples 
$$
 \xymatrix{
 (V_{d},C_d) \ar[d]\ar[r]^{\nu} &  (V,C) \ar[dd]^f \\
  \vdots \ar[d] & \\
  (V_1,C_1) \ar[r] & T,
  } 
$$  
where
\begin{itemize}
\item the left hand side is a good tower of families of split nodal curves,

\item $\nu\colon V_d\to V$ and $V_1\to T$ are alterations,

\item  $(V_i,C_i)$ are toroidal and $(V_1,C_1)$ is log smooth, 

\item $C_i$ is the support of some effective Cartier divisor, and 

\item the induced morphism 
$$
\nu|_{V_d\setminus C_d }\colon V_d\setminus C_d \to V\setminus C
$$ 
is quasi-finite.
\end{itemize}
\end{prop}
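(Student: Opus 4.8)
The plan is to build the good tower by induction on the relative dimension $\dim V - \dim T$, using de Jong's theory of alterations by families of nodal curves as the engine at each step, and using Proposition \ref{p-fam-nod-curves-over-toric} to propagate the toroidal structure up the tower. First I would reduce to the case where $f\colon V\to T$ factors through a contraction $V\to W$ of relative dimension one: this is exactly Lemma \ref{l-produce-fib-rel-dim-one}, after replacing $V$ by a suitable resolution (which is an alteration, in fact birational, so harmless). Thus write $f$ as $V\to W\to T$ with $V\to W$ of relative dimension one and $W\to T$ of relative dimension $d-1$. Now apply de Jong's results [\ref{de-jong-smoothness-semi-stability}][\ref{de-jong-nodal-family}] to the fibration $V\to W$: there is an alteration $W_1\to W$ and an alteration $V'\to V\times_W W_1$ such that $V'\to W_1$ is a family of nodal curves; after a further finite base change one arranges it to be \emph{split}, and one can choose things so that the markings (the images of the sections coming from the multisection cutting out the nodal structure, together with the divisor $C$ and the branch locus) form a divisor whose support we call $C_1$ on $W_1$ and whose preimage we want to be $C_d$-like on the top.

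The core of the argument is the following bookkeeping. By induction applied to $(W_1, \text{appropriate divisor})\to T$ — note $W_1\to T$ has relative dimension $d-1$ — we obtain a good tower
$$
(V_{d-1},C_{d-1})\to \cdots \to (V_1,C_1)\to T
$$
of toroidal couples with $(V_1,C_1)$ log smooth, with $V_{d-1}\to W_1$ an alteration, and with the quasi-finiteness of the restriction over the open part. I then pull back the family of split nodal curves $V'\to W_1$ along $V_{d-1}\to W_1$: by Lemma \ref{l-nodal-curves-base-change} this is again a family of split nodal curves $V_d\to V_{d-1}$. One must, before pulling back, arrange on $W_1$ (by enlarging $C_1$, hence its pullbacks $C_{d-1}$, which stays the support of an effective Cartier divisor since on a toroidal/log smooth base every reduced toric-boundary-supported divisor is) that: the family $V'\to W_1$ is smooth over $W_1\setminus C_{d-1}$, the horizontal part of the pulled-back $C$ consists of disjoint sections inside the smooth locus (using de Jong, one may shrink and separate the sections, replacing $V'$ by a blow-up — still an alteration, and still birational so quasi-finiteness is preserved), and the vertical part of $D_d$ equals the full preimage of $C_{d-1}$. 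Setting $C_d$ to be the preimage of $C_{d-1}$ plus those horizontal sections, the data $(V_d,C_d)\to(V_{d-1},C_{d-1})$ satisfies the four bullet conditions defining a good tower step; then Proposition \ref{p-fam-nod-curves-over-toric} shows $(V_d,C_d)$ is toroidal and the morphism to $(V_{d-1},C_{d-1})$ is toroidal. The composite $V_d\to V'\to V$ (after identifying $V'$ birationally with its image) is an alteration, being a composition of alterations, and its restriction over the open complements is quasi-finite because it is so in each of the two factors — de Jong's construction is generically finite on the open part, and the inductive quasi-finiteness handles the base.

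The main obstacle I expect is not the toroidalisation — that is handled cleanly by Proposition \ref{p-fam-nod-curves-over-toric} — but rather the careful choice of markings at each stage so that the \emph{union} of all the divisors one needs (the original $C$, the branch/discriminant loci of all the nodal fibrations in the tower, and the extra sections) can be arranged, after suitable alterations, to live inside the toric boundary of a toroidal couple and to be the support of an effective Cartier divisor. Concretely: at stage $i$ one must pull back to $V_{i-1}$ not just $C_{i-1}$ but also the discriminant of the nodal family $V_i\to V_{i-1}$, and this discriminant must be made toric on $V_{i-1}$; this forces one to further alter $V_{i-1}$, which disturbs the tower below it, so the induction has to be set up to alter the \emph{whole} tower simultaneously (this is the standard subtlety in iterated semistable reduction over higher-dimensional bases). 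Managing this — i.e.\ formulating the inductive hypothesis strongly enough that the discriminant divisor on $W_1$ can be absorbed into $C_1$ from the start, before running the lower induction — is where the real work lies. The quasi-finiteness over the open part is then automatic since every alteration used is either generically finite onto its image (de Jong) or birational (resolutions and blow-ups), and quasi-finiteness is stable under composition.
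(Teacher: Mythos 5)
Your overall architecture matches the paper's proof exactly: resolve and factor $V\to T$ through a relative-dimension-one contraction $V'\to W'$, apply de Jong to that fibration, apply induction to the base $W''\to T$, pull back the nodal family, and invoke Proposition \ref{p-fam-nod-curves-over-toric} to get toroidality. That part is sound.

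The gap is in the quasi-finiteness claim, which you assert is ``automatic since every alteration used is either generically finite onto its image (de Jong) or birational.'' This does not follow. Generic finiteness (or birationality) of $V''\to V'$ does not make its restriction to $V''\setminus\Supp C''\to V'\setminus\Supp C'$ quasi-finite: the alteration can contract positive-dimensional fibres over a closed subset, and that closed subset has no a priori reason to sit inside $\Supp C'$. The paper deals with this by a specific argument: factor the alteration $V''\to V'$ as a birational contraction followed by a finite morphism, so that $V''\to V'$ is finite outside a codimension-two subset $Q'\subset V'$; then use that $V'\to W'$ has relative dimension one to conclude $Q'$ is vertical over $W'$, and hence can be absorbed into $G'$ (and so into $C'$). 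This is exactly the reason the construction insists on peeling off one curve-fibration direction at a time — without it the codimension-two locus need not be vertical and could not be swallowed by the boundary. Your proposal needs this step made explicit, since without it the quasi-finiteness bullet of the proposition is not established.

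One smaller comment: the worry you raise about needing to ``alter the whole tower simultaneously'' because the discriminant enlargement on $W_1$ would disturb the lower tower is a non-issue in the paper's setup. All enlargements of the boundary divisors on $W'$ and $W''$ (to contain the discriminant, the branch locus, the non-\'etale locus, and the codimension-two locus $Q'$) are performed \emph{before} the induction hypothesis is invoked for $(W'',\Supp G'')\to T$. The induction is then applied once, to a fixed couple, and nothing downstream needs to be revisited. You do gesture at this (``from the start, before running the lower induction'') but treat it as a delicate open point; it is in fact straightforward once the divisors are fixed up front.
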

\begin{proof} 
\emph{Step 1}.
We apply induction on 
$$
d:=\dim V-\dim T+1.
$$ 
If $d=1$, then $f$ is generically finite, so we can take $V_1\to V$ to 
be a log resolution and  $C_1$ be the birational transform of $C$ union the exceptional divisors. 
We then assume $d\ge 2$.
By Lemma \ref{l-produce-fib-rel-dim-one},  
we can find a resolution $V'\to V$ and a contraction $V'\to W'/T$ of relative dimension one. Let $C'\subset V'$ be 
an effective Cartier divisor whose support contains the birational transform of $C$ and such that 
$V'\to V$ restricted to $V'\setminus \Supp C'$ is an isomorphism onto its image. 
Let $G'$ be an effective Cartier divisor on 
$W'$ so that $W'\setminus \Supp G'$ is smooth and $V'\to W'$ is smooth over $W'\setminus \Supp G'$.
Replacing $C',G'$, we can assume the support of the vertical$/W'$ part of $C'$ is equal to the support of the pullback of $G'$.

\emph{Step 2}.
By [\ref{de-jong-nodal-family}, Theorem 2.4], there exist alterations 
$V''\to V'$ and $W''\to W'$ and an induced morphism $V''\to W''$ which is a family of nodal curves with smooth generic fibre.  
 Applying [\ref{de-jong-smoothness-semi-stability}, Theorem 5.8], 
 we can assume $V''\to W''$ is a family of split nodal curves.
Let $C''\subset V''$ be the pullback of $C'$ and let $G''\subset W''$ be the pullback of $G'$. 
We can moreover assume that the support of the horizontal$/W''$ part of ${C}''$ is a 
disjoint union of sections of $V''\to W''$ contained in the smooth locus of $V''\to W''$. 
After replacing $G'$ and the vertical$/W'$ part of $C'$, and replacing $G''$ and the vertical$/W''$ part of $C''$ accordingly, we can assume 
 $W''\setminus \Supp G''$ is smooth, $V''\to W''$ is smooth over $W''\setminus \Supp G''$, and that the support of the pullback of $G''$ is the support of the vertical$/W''$ part of $C''$. 
 In addition, we can assume   $W''\to W'$ is \'etale over $W'\setminus \Supp G'$.
 
\emph{Step 3}. 
Applying induction to the couple $(W'', \Supp G'')$ and the morphism $W''\to T$, there exists a commutative diagram 
$$
 \xymatrix{
 (V_{d-1},C_{d-1}) \ar[d]\ar[r]^-\mu &  (W'', \Supp G'') \ar[dd] \\
  \vdots \ar[d] & \\
  (V_1,C_1) \ar[r] & T,
  } 
$$ 
where 
\begin{itemize}
\item the left hand side is a good tower of families of split nodal curves,

\item $(V_i,C_i)$ are toroidal and $(V_1,C_1)$ is log smooth, 

\item $\mu\colon V_{d-1}\to W''$ and $V_1\to T$ are alterations, 

\item $C_i$ is the support of some effective Cartier divisor, and 

\item  $\mu|_{V_{d-1}\setminus C_{d-1}}$ gives the morphism $V_{d-1}\setminus C_{d-1}\to W''\setminus \Supp G''$ which is quasi-finite (in particular, $C_{d-1}$ contains the support of the pullback of $G''$).
\end{itemize}

\emph{Step 4}.
Let 
$$
V_d:=V_{d-1}\times_{W''}V''.
$$ 
Then, by Lemma \ref{l-nodal-curves-base-change}, the induced morphism  $V_d\to V_{d-1}$ is a family of split nodal curves with smooth general fibres.
Let $C_d\subset V_d$ be the support of the pullback of $C''$ union the support of 
the pullback of $C_{d-1}$. 

We will argue that $(V_d,C_d)$ is a couple. We need to show that $V_d$ is a variety and that $C_d$ is a reduced divisor. The latter follows from the former as 
$C_d$ is the support of some effective Cartier divisor. By construction, $V_{d-1}$ is a variety and $V_d\to V_{d-1}$ is flat
with integral general fibres. Then $V_d$ is integral hence a variety, and so $(V_d,C_d)$ is a couple. 

By construction, 
the horizontal$/V_{d-1}$ components of $C_d$ are 
disjoint sections of  $V_d\to V_{d-1}$ contained in the smooth locus of $V_d\to V_{d-1}$, and the 
vertical$/V_{d-1}$ part of $C_d$ coincides with the inverse image of $C_{d-1}$. In addition, 
$V_d\to V_{d-1}$ is smooth over $V_{d-1}\setminus C_{d-1}$ as $C_{d-1}$ contains the support of the pullback of $G''$. 
Moreover, by construction, the induced morphism $\nu\colon V_d\to V$ is an alteration and 
$\nu(V_d\setminus C_d)\subseteq V\setminus C$. By Proposition \ref{p-fam-nod-curves-over-toric}, $(V_d,C_d)$ is a toroidal couple and $(V_d,C_d)\to (V_{d-1},C_{d-1})$ is a toroidal morphism.

\emph{Step 5}.
We show that we can run the above arguments so that 
$\nu|_{{V_d\setminus C_d}}$ is quasi-finite. 
The morphism $V''\to V'$ factors as a birational contraction $V''\to S$ followed by a finite morphism $S\to V'$. 
In particular, $V''\to V'$ is finite over the complement of a codimension $2$ closed subset $Q'$ of $V'$.   
Since $V'\to W'$ has relative dimension one, $Q'$ is vertical$/W'$, hence at this point we can add to $G'$ (and accordingly to $C'$) 
so that $Q'\subset \Supp C'$. Thus 
$$
V''\setminus \Supp C''\to V'\setminus \Supp C'
$$ 
is 
finite which in turn implies 
$$
V''\setminus \Supp C''\to V\setminus  C
$$ 
is finite onto an open subset of $V\setminus  C$. 

On the other hand,
$$
V_{d-1}\setminus C_{d-1}\to W''\setminus \Supp G''
$$ 
is quasi-finite, hence $V_{d}\setminus N_{d}\to V''$ is quasi-finite where $N_d$ is the inverse image of $C_{d-1}$. But 
$C_d$ contains both $N_d$ and the pullback of $C''$, hence the induced morphism 
$$
V_d\setminus C_d\to V''\setminus \Supp C''
$$ 
is quasi-finite. Therefore,  
$
V_d\setminus C_d\to V\setminus C
$
is quasi-finite.
\end{proof}

\subsection{Bounded toroidalisation of fibrations over curves}

\begin{proof}[Proof of Theorem \ref{t-bnd-torification}]
\emph{Step 1.}
We apply induction on $d$. The case $d=1$ is trivial as we can take $X'=Z'$ to be the normalisation of $X$, $\mu\colon Z'\to Z$ and $\pi\colon X'\to X$ the induced morphisms, and $D'=E'$ be the union of the inverse images of $D$ and $z$, hence $(X',D')\to (Z',E')$ is the identity morphism which is toroidal. Thus we assume $d\ge 2$. 
Let $\mathcal{P}$ be a set of  couples $(X/Z,D)$ satisfying the assumptions of the theorem, e.g. initially 
we can take $\mathcal{P}$ to be the set of all possible $(X/Z,D)$. Removing the vertical$/Z$ 
components of $D$ we can assume that every component of $D$ is horizontal$/Z$: note that in the end we will have a morphism $X'\setminus D'\to X\setminus (D+f^*z)$, so removing the vertical part of $D$ does not cause problems.  

By Lemma \ref{l-univ-family}, there exist finitely many projective morphisms ${V}^i\to T^i$ of varieties 
and reduced divisors $C^i\subset V^i$ depending only on $d,r$ such that for each 
$(X/Z,D)\in\mathcal{P}$, there is an $i$ and a morphism $Z\to T^i$ 
such that $X=Z\times_{T^i}V^i$ and $D=Z\times_{T^i}C^i$. 
Replacing $\mathcal{P}$,  we can fix $i$, hence write $V,T,C$ instead of $V^i,T^i,C^i$.

\emph{Step 2.}
By Proposition \ref{p-tower-nod-curve-fix-fib}, we can alter $(V,C)\to T$ into a good tower of 
families of split nodal curves  
$$
 \xymatrix{
 (V_{d},C_d) \ar[d]\ar[r]^{\nu} &  (V,C) \ar[dd] \\
  \vdots \ar[d] & \\
  (V_1,C_1) \ar[r] & T
  } 
$$  
where 
\begin{itemize}
\item the left hand side is a tower of families of split nodal curves,

\item $\nu\colon V_d\to V$ and $V_1\to T$ are alterations,

\item  $(V_i,C_i)$ are toroidal and $(V_1,C_1)$ is log smooth, 

\item $C_i$ is the support of some effective Cartier divisor, and

\item $\nu|_{V_d\setminus C_d}\colon V_d\setminus C_d \to V\setminus C$ is quasi-finite.
\end{itemize} 
Let $S\subset T$ be a proper closed subset 
such that $V_1\to T$ is a finite \'etale morphism over $T\setminus S$ and that $C_1$ is mapped into $S$. We can remove those  $(X/Z,D)\in\mathcal{P}$ for which 
the image of $Z\to T$ is contained in $S$ because for such couples 
we can replace $T$ by a component of $S$ and replace $(V,C)$ accordingly, and do induction on dimension of $T$. So from now on we assume 
the image of $Z\to T$ intersects $T\setminus S$.

\emph{Step 3.}
Let $X_1'$ be the normalisation of a component of $Z\times_TV_1$ dominating $Z$. By Step 2, the image of $X_1'\to V_1$ is not contained in $C_1$. 
Moreover, the induced morphism $\rho\colon X_1'\to Z$ is finite of degree not exceeding the degree of $V_1\to T$. 
Let $X_i'=X_1'\times_{V_1}V_i$. 

Pick a closed subset $Q\subset V$ so that $\nu(C_d)\subseteq Q$ and $V_d\to V$ is \'etale over $V\setminus Q$. Let $\mathcal{Q}\subseteq \mathcal{P}$ be the set of those couples $(X/Z,D)$ such that the image of $X\to V$ is contained in $Q$. To deal with these couples, by Lemma \ref{l-fibred-product}, we can replace the family $V\to T$ with finitely many new families $V^j\to T^j$ where $\dim V^j<\dim V$, hence we can apply induction on dimension of $V$. Thus we remove the elements of $\mathcal{Q}$ from $\mathcal{P}$, hence we assume that for every couple $(X/Z,D)$ in $\mathcal{P}$, the image of $X\to V$ is not contained in $Q$.
Then we can assume that $X_d'$ is not mapped into $C_d$ and that $V_d\to V$ is \'etale over the generic point of the image of $X\to V$. In particular, this implies that $X_i'$ is not mapped into $C_i$ by $X_i'\to V_i$ because the inverse image of $C_i$ to $V_d$ is contained in $C_d$, and moreover $\deg (X_d'\to X)\le \deg(V_d\to V)$. 

Now let $B_i'\subset X_i'$ be the inverse image of $C_i$ under $X_i'\to V_i$ with reduced structure. 
Note that since $C_i$ is the support of 
some effective Cartier divisor, $B_i'$ is a divisor. 
Let $D_i'$ be the union of $B_i'$ and the support of the fibres of $X_i'\to X_1'$ over the points in $\rho^{-1}\{z\}$ (that is, union with the support of the fibre of $X_i'\to Z$ over $z$).

\emph{Step 4.}
By \ref{ss-tower-family-nodal-curves}(3), the induced tower  
$$
(X_d',D_d')\to \cdots \to (X_1',D_1')
$$
is a good tower of families of split nodal curves. Therefore, since $(X_1',D_1')$ is log smooth, applying Proposition \ref{p-fam-nod-curves-over-toric}, we deduce that 
$(X_i',D_i')$ is toroidal and $(X_i',D_i')\to (X_{i-1}',D_{i-1}')$ is a toroidal morphism for each $i$.

Since $D$ is the inverse image of $C$ under $X\to V$ and since $C_d$ contains the inverse image of 
$C$ under $V_d\to V$, we deduce $D_d'$ contains the inverse image of $D$ under $X_d'\to X$. Thus we get 
a morphism $X_d'\setminus D_d'\to X\setminus D$ and $(X_d',D_d')\to (X,D)$ is a morphism of couples. 

We claim that $X_d'\setminus D_d'\to X\setminus D$ is quasi-finite. Assume not, say this morphism contracts a curve $\Gamma'$. 
First, assume $X_1'\to V_1$ is not constant, which is then a quasi-finite morphism (not necessarily surjective). Then $X_d'\to V_d$ is also quasi-finite,  hence 
$\Gamma' $ is mapped to a curve $\Gamma$ in $V_d$ which is contracted by $V_d\to V$. 
Then $\Gamma\subset C_d$ by Step 2, so $\Gamma'\subset D_d'$, a contradiction. Now assume $X_1'\to V_1$ is constant, which means $Z\to T$ is also constant. In this case, $X=F\times Z$ for some fibre $F$ of $V\to T$ and $X_d'=G\times X_1'$ for some fibre $G$ of $V_d\to V_1$. Since $\Gamma'$ is contracted by $X_d'\to X$, $\Gamma'$ is contained in a fibre of $X_d'\to X_1'$, so $\Gamma'$ is mapped to a curve $\tilde{\Gamma}\subseteq G\subseteq V_d$ which is in turn contracted by $V_d\to V$. But then $\tilde{\Gamma}\subset C_d$, so $\Gamma'$ is contained in $D_d'$, a contradiction.

\emph{Step 5.}
Now put 
$$
(X',D'):=(X_d',D_d') \ \ \mbox{and} \ \ (Z',E'):=(X_1',D_1').
$$ 
There is an effective Cartier divisor $G_d$ on $V_d$ whose support is $C_d$. Pick a very ample$/V_1$ divisor $A_d$ on $V_d$ so that $A_d-G_d$ is ample over $V_1$. Let $A'$ on $X'$ be the pullback of $A_d$ and $G'$ be the pullback of $G_d$ plus the pullback of $E'$. Then $A'-G'$ is ample over $Z'$ and $D'=\Supp G'$. 
 Moreover, we can assume that 
$$
\deg_{A'/Z'}A'=\deg_{A_d/V_1}A_d
$$
and 
$$
\deg_{A'/Z'}D'\le \deg_{A'/Z'}G'= \deg_{A_d/V_1}G_d\le \deg_{A_d/V_1}A_d.
$$
Therefore, we can choose $r'$ 
depending only on $d,r$ such that  
$$
\deg_{A'/Z'}A'\le r' ~~~ \mbox{and}~~~ \deg_{A'/Z'}D'\le r'.
$$ 
Also note that, by construction, $D'$ contains the fibre of $X'\to Z$ over $z$ as $E'$ contains the inverse image of $z$ under $Z'=X_1'\to Z$. Moreover, replacing $r'$ we can assume 
$$
\deg (Z'\to Z)\le \deg (V_1\to T)\le r'.
$$
Also we can assume that 
$$
\deg (X'\to X) \le \deg(V_d\to V)\le r'
$$
where the first inequality follows from Step 3.

Finally, we can assume that $A$ on $X$ is the pullback of some very ample$/T$ divisor $H$ on $V$ (by the proof of Lemma \ref{l-univ-family}), so we can choose $A_d$ so that $A_d-H|_{V_d}$ is ample$/V_1$, hence $A'-\pi^*A$ is ample$/Z'$, where $\pi$ denotes $X'\to X$.
\end{proof}

\section{\bf Toric models of toroidal fibrations}

In this section we define special toric towers, study their geometry, and relate them to good towers of families of split nodal curves. 

\subsection{Special toric towers}\label{ss-special-toric-tower}\

(1)
A \emph{toric tower}
$$
(V_d,C_d)\to (V_{d-1},C_{d-1})\to \cdots \to (V_1,C_1)
$$
consists of toric couples $(V_i,C_i)$ and dominant toric morphisms $V_i\to V_{i-1}$ (but not necessarily projective). Note that since the torus of $V_i$ is mapped into the torus of $V_{i-1}$, the inverse image of $C_{i-1}$ is contained in $C_i$, so the above tower is a tower of couples as in \ref{ss-tower-couples}.  

(2)
We say a toric tower as in (1) is \emph{special} if it is defined as follows: 

\begin{itemize}
\item $V_1=\A^p=\Spec k[t_1,\dots,t_p]$ and $C_1$ is the vanishing set of $t_1\cdots t_p$, for some $p$,

\item $(V_i,C_i)$ and $\Phi_i$ are defined inductively as follows; assuming we have already defined 
$(V_{j},C_{j})$ and $\Phi_{j}$ for $j\le i-1$, we have either  
{\begin{itemize}
\item[$(a)$]  $\Phi_i=0$ and  
$$
V_i=V_{i-1}\times \A^1
$$ 
and $C_i$ is the inverse image of $C_{i-1}$ plus the vanishing section of $\alpha_i$, where $\alpha_i$ is a new variable on $\A^1$, 
 or 
\item[$(b)$] $\Phi_i=\alpha_i\alpha_i'-\lambda_i$ 
where $\lambda_i$ is a non-zero character in the variables $\alpha_2,\dots, \alpha_{i-1}$, $t_1,\dots,t_p$ and  
$$
V_i\subset V_{i-1}^\circ\times \A^2
$$ 
is the closed subscheme defined by $\Phi_i$, where $V_{i-1}^\circ\subset V_{i-1}$ is the maximal open subset where $\lambda_i$ is regular, $\alpha_i,\alpha_i'$ are new variables on $\A^2$, and $C_i$ is the inverse image of $C_{i-1}$,
\end{itemize}
}
\item $V_i\to V_{i-1}$ are given by projection.
\end{itemize}

By Lemma \ref{l-fam-over-toric-pairs} and induction on $i$, $(V_i,C_i)$ are normal toric couples and $(V_i,C_i)\to (V_{i-1},C_{i-1})$ are toric morphisms. In particular, this means $C_i$ are reduced divisors, so they have no non-divisorial components (we can also use flatness of $V_i\to V_{i-1}$ and reducedness of its fibres and apply [\ref{Liu}, \S 4.3.1, Proposition 3.8] to even deduce that $C_i$ are reduced Cartier divisors). In both cases, $\mathbb{T}_{V_i}\simeq \mathbb{T}_{V_{i-1}}\times \mathbb{T}_{\A^1}$: this is obvious in case $(a)$; in case $(b)$, we use the fact that $\alpha_i'=\frac{\lambda_i}{\alpha_i}$ on the locus where $\alpha_i$ does not vanish. Also, the isomorphism is an isomorphism of tori as $\lambda_i$ is a character. For more details, see the proof of Lemma \ref{l-fam-over-toric-pairs}. In particular, $\alpha_2,\dots, \alpha_i, t_1,\dots,t_p$ are the coordinate functions on the torus $\mathbb{T}_{V_i}$ of dimension $i-1+p$.
Moreover, $V_i\to V_{i-1}$ is flat with smooth integral fibres over $V_{i-1}\setminus C_{i-1}$.

(3)
 Given a special toric tower as in (2), let $F_i$ be the fibre of $V_i\to V_1$ over a closed point $v\in V_1\setminus C_1$. 
We claim that $F_i$ is integral and not contained in $C_i$, for each $i$. In case $(a)$, $F_i=F_{i-1}\times \A^1$, so $F_i$ is integral by induction on $i$. In case $(b)$, $F_i\to F_{i-1}$ is flat with smooth integral general fibres as $\lambda_i$ is a character not vanishing at any point of $F_{i-1}\setminus C_{i-1}$.
Thus $F_i$ is integral [\ref{Liu}, \S 4.3.1, Proposition 3.8].

On the other hand, $F_i$ is not contained in $C_i$: indeed, pick any closed point $w\in F_{i-1}\setminus C_{i-1}$; then this point belongs to the torus of $V_{i-1}$, hence belongs to $V_{i-1}^\circ\cap F_{i-1}$; then the fibre of $V_i\to V_{i-1}$ over $w$ is not contained in $C_i$ in either cases $(a)$,$(b)$; but this fibre  is the same as the fibre of $F_i\to F_{i-1}$ over $w$, so $F_i$ is not contained in $C_i$. 

(4)
Given a special toric tower as in (2), we claim that there is a natural congruent birational map 
$$
({V}_d, C_d) \bir (P=\PP^{d-1}_{V_1},G)
$$ 
over $V_1$, where $G$ is the toric boundary divisor of $P$ which is the sum of the coordinate hyperplanes and the inverse image of $C_1$. In particular, any toric prime divisor $D$ over $V_d$ is also a toric prime divisor over $P$. 

By (2), $\mathbb{T}_{V_i}\simeq \mathbb{T}_{V_{i-1}}\times \mathbb{T}_{\A^1}$ and the morphism 
$\mathbb{T}_{V_i}\to \mathbb{T}_{V_{i-1}}$ is given by projection. Thus, $\mathbb{T}_{V_d}\simeq \mathbb{T}_{V_{1}}\times \mathbb{T}_{\A^{d-1}}$ and the morphism $\mathbb{T}_{V_d}\to \mathbb{T}_{V_{1}}$ is given by projection onto the first factor. 
Identifying $\mathbb{T}_{V_d}$ with the torus $\mathbb{T}_{V_1\times \PP^{d-1}}$, we get the desired birational map $V_d\bir P/V_1$. The assertion about toric prime divisors $D$ follows from the existence of the birational map. 

(5) 
Given a special toric tower as in (2), assume $({Z}_1,E_1)\to (V_1,C_1)$ is a morphism of couples. So the image of $Z_1$ is not contained in $C_1$. Taking $Z_i:=Z_1\times_{V_1}V_i$ and taking 
$E_i\subset Z_i$ to be the inverse image of $E_1$ union the inverse image of $C_i$, 
we can define the \emph{pullback tower} (as in \ref{ss-tower-couples})
$$
({Z}_d,{E}_d)\to \cdots \to ({Z}_1,E_1).
$$ 

Note that $Z_i\to Z_1$ is flat with integral general fibres, by (3) above. Thus $Z_i$ is integral. Moreover, the image of $Z_i\to V_i$ is not contained in $C_i$ because the general fibres of $Z_i\to Z_1$ are not mapped into $C_i$, again by (3).

\subsection{Pullback of special toric towers}
In this subsection we will show that pullback of special toric towers are quite close to being special toric towers.

\begin{prop}\label{p-bchange-special-tower}
Assume that we are given a special toric tower 
\begin{equation}\label{eq-stt-1}
  (V_d,C_d)\to  \cdots \to (V_1,C_1)  
\end{equation}
as in \ref{ss-special-toric-tower}(2), and that $({Z}_1,E_1)\to (V_1,C_1)$ is a morphism of couples from a log smooth couple of dimension one. Let
$$
  ({Z}_d,{E}_d)\to \cdots \to ({Z}_1,E_1)  
$$
be the pullback of (\ref{eq-stt-1}) by base change to $(Z_1,E_1)$ as in \ref{ss-special-toric-tower}(5).
Then for each closed point $z_1\in Z_1$, perhaps after shrinking $Z_1$ around $z_1$, there exists 
a commutative diagram of couples
$$
\xymatrix{
(Z_d,E_d) \ar[d]\ar[r] &(W_d,D_d) \ar[d] \\
(Z_{d-1},E_{d-1}) \ar[d]\ar[r] &(W_{d-1},D_{d-1}) \ar[d] \\
\vdots \ar[d]&   \vdots \ar[d] \\
(Z_{1},E_1)  \ar[r] &   (W_1,D_1)
}
$$
where 
\begin{itemize}
\item the right hand side is a special toric tower, 
\item $\pi_i\colon {Z}_i\to {W}_i$ is an \'etale morphism with $E_i=\pi_i^{-1}D_i$, for each $i$, and 
\item the induced morphism $Z_i\to Z_1\times_{W_1}W_i$ is an open immersion, for each $i$. 
\end{itemize}
\end{prop}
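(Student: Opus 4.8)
The plan is to construct the right-hand tower $(W_i,D_i)$ and the étale maps $\pi_i$ inductively on $i$, carrying along a dictionary that records how the torus coordinates of the $V_i$ and of the $W_i$ are related after pullback to the $Z_i$. First I would treat the base case $i=1$. Set $W_1=\A^1=\Spec k[\tau]$ with $D_1$ the vanishing set of $\tau$. Since $(Z_1,E_1)\to(V_1,C_1)$ is a morphism of couples and $(Z_1,E_1)$ is log smooth of dimension one, after shrinking $Z_1$ around $z_1$ the divisor $E_1$ is either empty or the single reduced point $z_1$: in the first case choose $\pi_1\colon Z_1\to\A^1$ to be any étale map whose image misses $0$, in the second let $\pi_1$ be given by a local parameter $u$ at $z_1$ vanishing only at $z_1$; in both cases $E_1=\pi_1^{-1}(D_1)$ and $\pi_1$ is étale (shrinking further to the étale locus). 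Writing $t_1,\dots,t_p$ for the coordinates on $V_1=\A^p$ and $s_j$ for their pullbacks to $Z_1$, the hypothesis $\{s_1\cdots s_p=0\}\subseteq E_1$ lets me write, after one more shrinking, $s_j=u_j\cdot(\pi_1^*\tau)^{m_j}$ with $u_j$ nowhere vanishing on $Z_1$ and $m_j\ge 0$; these finitely many conditions are where the shrinking of $Z_1$ is used.

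For the inductive step, suppose $(W_j,D_j)$ and $\pi_j$ have been constructed for $j\le i-1$, satisfying the three required properties and, in addition, the dictionary relation $\pi_j^*(\beta_k)=\alpha_k$ on $Z_j$ for every earlier new coordinate, where $\beta_k$ denotes the coordinate of $W_k$ playing the role that $\alpha_k$ plays in $V_k$. If step $i$ of the given tower is of type $(a)$, I would set $W_i=W_{i-1}\times\A^1$ with new coordinate $\beta_i$ and $\pi_i=\pi_{i-1}\times\mathrm{id}$; commutativity, étaleness, $E_i=\pi_i^{-1}(D_i)$, the dictionary relation for $\beta_i$, and the open immersion $Z_i\hookrightarrow Z_1\times_{W_1}W_i$ are all immediate from the inductive hypothesis and base change. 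If step $i$ is of type $(b)$ with $\Phi_i=\alpha_i\alpha_i'-\lambda_i$, I would pull back the character $\lambda_i$ (a monomial in $\alpha_2,\dots,\alpha_{i-1},t_1,\dots,t_p$) to $g_i:=\lambda_i|_{Z_{i-1}}$. Using $\alpha_k|_{Z_{i-1}}=\pi_{i-1}^*(\beta_k)$ and $t_j|_{Z_{i-1}}=(u_j|_{Z_{i-1}})\cdot\pi_{i-1}^*(\tau)^{m_j}$, one gets $g_i=h_i\cdot\pi_{i-1}^*(\mu_i)$, where $\mu_i$ is the monomial obtained from $\lambda_i$ by substituting $\beta_k$ for $\alpha_k$ and $\tau^{m_j}$ for $t_j$ — a nonzero character in the torus coordinates of $W_{i-1}$ — and $h_i$ is the pullback to $Z_{i-1}$ of a nowhere-vanishing function on $Z_1$, hence nowhere vanishing. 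I would then restrict to the open sets $Z_{i-1}^\circ$ and $W_{i-1}^\circ$ where $g_i$, resp. $\mu_i$, is regular (these match under $\pi_{i-1}$ by Lemma \ref{l-etale-morphism-regular-points}), and apply Lemma \ref{l-special-subvar-etale} with $\gamma=\mu_i$ and $g=h_i$: the closed subscheme cut out by $\alpha_i\alpha_i'-g_i$ — which is exactly $Z_i$, since $Z_i\to Z_{i-1}$ factors through $Z_{i-1}^\circ$ by Lemma \ref{l-fibred-product} and $V_i$ is cut out in $V_{i-1}^\circ\times\A^2$ by $\alpha_i\alpha_i'-\lambda_i$ — is identified with $Z_{i-1}^\circ\times_{W_{i-1}^\circ}W_i$, where $W_i\subset W_{i-1}^\circ\times\A^2$ is cut out by $\beta_i\beta_i'-\mu_i$, and the induced map $\pi_i\colon Z_i\to W_i$ is étale. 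From the explicit form of the map in Lemma \ref{l-special-subvar-etale} I read off $\pi_i^*(\beta_i)=\alpha_i$, which extends the dictionary; the equality $E_i=\pi_i^{-1}(D_i)$ follows because, in both towers, the level-$i$ boundary in a type-$(b)$ step is the inverse image of the level-$(i-1)$ boundary; and $Z_i=Z_{i-1}\times_{W_{i-1}}W_i\hookrightarrow(Z_1\times_{W_1}W_{i-1})\times_{W_{i-1}}W_i=Z_1\times_{W_1}W_i$ is an open immersion by base change of the one at level $i-1$. By construction $W_1=\A^1$ and each step is of type $(a)$ or $(b)$ as in \ref{ss-special-toric-tower}(2), so $(W_d,D_d)\to\cdots\to(W_1,D_1)$ is a special toric tower (its couples being normal toric by Lemma \ref{l-fam-over-toric-pairs}), which completes the induction.

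The hard part will be the type-$(b)$ step, specifically verifying that the pulled-back character $g_i=\lambda_i|_{Z_{i-1}}$ factors as a nowhere-vanishing function times the pullback of a genuine character on $W_{i-1}$ — this is exactly what the dictionary and the factorisation $s_j=u_j(\pi_1^*\tau)^{m_j}$ are engineered to deliver — and that all the fibre-product identifications are arranged so that the subscheme $S$ produced by Lemma \ref{l-special-subvar-etale} coincides on the nose with $Z_i$. Everything else — commutativity, the base-change arguments for the open immersions, and the transfer of regularity across the étale maps — is routine bookkeeping.
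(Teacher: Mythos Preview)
Your approach is essentially the paper's: build $\pi_1$ from a local parameter, factor the pulled-back coordinates as $s_j=u_j\cdot(\pi_1^*\tau)^{m_j}$, construct the $W_i$ inductively with the same type-$(a)$/$(b)$ split, and invoke Lemma~\ref{l-special-subvar-etale} in the type-$(b)$ step. The dictionary $\pi_j^*\beta_k=\alpha_k$ and the substitution producing $\mu_i$ are exactly what the paper does.

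There is one slip. You take $Z_{i-1}^\circ$ to be the regular locus of $g_i$ and claim that the hypersurface $\{\alpha_i\alpha_i'=g_i\}\subset Z_{i-1}^\circ\times\A^2$ ``is exactly $Z_i$'', hence $Z_i=Z_{i-1}\times_{W_{i-1}}W_i$. This need not hold: by definition $Z_i=Z_1\times_{V_1}V_i$ sits over the preimage $\widetilde Z_{i-1}^\circ$ of $V_{i-1}^\circ$ in $Z_{i-1}$, and $\widetilde Z_{i-1}^\circ$ can be strictly smaller than your $Z_{i-1}^\circ$ when $\lambda_i$ has cancellation after pullback (e.g.\ $V_1=\A^2$, $Z_1\to V_1$ via $z\mapsto(z,z)$, $\lambda_2=t_1/t_2$, so $g_2=1$ is regular everywhere but $\widetilde Z_1^\circ=\{z\neq 0\}$). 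The paper avoids this by setting $Z_{i-1}^\circ:=\widetilde Z_{i-1}^\circ$, so that Lemma~\ref{l-special-subvar-etale} gives $Z_i\simeq Z_{i-1}^\circ\times_{W_{i-1}^\circ}W_i$ on the nose; then $Z_{i-1}^\circ$ is open in $Z_{i-1}\hookrightarrow Z_1\times_{W_1}W_{i-1}$, and base changing along $W_i\to W_{i-1}^\circ$ yields the open immersion $Z_i\hookrightarrow Z_1\times_{W_1}W_i$. Your final conclusions are unaffected---$Z_i$ is still open in your larger hypersurface, and the composite is still an open immersion---but the intermediate equality $Z_i=Z_{i-1}\times_{W_{i-1}}W_i$ should be weakened to an open inclusion.
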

\begin{proof}
\emph{Step 1.}
Shrinking $Z_1$, we can assume it is affine, say $Z_1=\Spec R$. 
First, we construct $\pi_1\colon Z_1\to W_1=\Spec k[t]$. Let $u$ be a local parameter at $z_1$. We can assume it is regular everywhere. Assume $z_1\in E_1$. Then shrinking $Z_1$, we can assume that $E_1=z_1$ and we define $k[t]\to R$ by sending $t$ to $u$ which then gives $\pi_1\colon Z_1\to W_1$. This is \'etale because $u$ is a local parameter. Also $E_1=\pi_1^{-1}D_1$ where $D_1$ is the origin on $W_1$. 

Now assume $z_1\notin E_1$. Shrinking $Z_1$ around $z_1$ we can assume $E_1=0$. Let $Z_1\to W_1$ be the morphism 
given by $k[t]\to R$ sending $t$ to $u-1$. Shrinking $Z_1$, we can assume that the morphism is \'etale near $z_1$ and that we again have $E_1=\pi_1^{-1}D_1$. Note that the induced map $Z_1\to Z_1\times_{W_1}W_1$ is an isomorphism.

\emph{Step 2}. 
Recall the variables $\alpha_2,\dots,\alpha_d$ and the equations $\Phi_2, \dots,\Phi_d$ in the definition of the given tower 
$$
(V_d,C_d)\to  \cdots \to (V_1,C_1).
$$ 
For each $i$, either $\Phi_i=0$ or $\Phi_i=\alpha_i\alpha_i'-\lambda_i$ for some non-zero character $\lambda_i$ in the variables $\alpha_2,\dots, \alpha_{i-1}, t_1,\dots,t_p$. 
In case $\Phi_i=0$, $V_i=V_{i-1}\times \A^1$ and $C_i$ is the inverse image of $C_{i-1}$ plus the vanishing section of the variable $\alpha_i$ on $\A^1$. And in case $\Phi_i=\alpha_i\alpha_i'-\lambda_i$, 
$$
V_i\subset V_{i-1}^\circ\times \A^2
$$ 
is the closed subscheme defined by $\Phi_i$, where $V_{i-1}^\circ\subset V_{i-1}$ is the maximal open subset where $\lambda_i$ is regular, $\alpha_i,\alpha_i'$ are the coordinate variables on $\A^2$, and $C_i$ is the inverse image of $C_{i-1}$.

The given morphism $Z_1\to V_1$ induces a homomorphism 
$$
k[t_1,\dots,t_p]\to R.
$$
Let $s_j$ be the image of $t_j$ under this homomorphism, that is, $s_j$ is the pullback of $t_j$ to $Z_1$ which is non-zero since the generic point of $Z_1$ maps to outside $C_1$ by assumption. 
Then in case $\Phi_i=0$, $Z_i=Z_{i-1}\times \A^1$ and $E_i$ is the inverse image of $E_{i-1}$ plus the vanishing section of $\alpha_i$. And in case $\Phi_i=\alpha_i\alpha_i'-\lambda_i$, 
$$
Z_i\subset Z_{i-1}^\circ\times \A^2
$$ 
is the closed subscheme defined by $\alpha_i\alpha_i'-\lambda_i|_{Z_{i-1}^\circ}$, where $Z_{i-1}^\circ$ is the inverse image of $V_{i-1}^\circ$ to $Z_{i-1}$, and $E_i$ is the inverse image of $E_{i-1}$. Here, $\lambda_i|_{Z_{i-1}^\circ}$ means the pullback of $\lambda_i$ to $Z_{i-1}^\circ$.

\emph{Step 3}. 
We will construct $(W_i,D_i)$ and $\pi_i\colon Z_i\to W_i$, inductively. 
Assume that we have already constructed 
$$
\xymatrix{
(Z_{i-1},E_{i-1}) \ar[d]\ar[r] &(W_{i-1},D_{i-1}) \ar[d] \\
\vdots \ar[d]&   \vdots \ar[d] \\
(Z_{1},E_1)  \ar[r] &   (W_1,D_1)
}
$$
satisfying the properties listed in the proposition.
And assume that the right hand side special toric tower is defined using variables $\beta_2,\dots,\beta_{i-1}$ and equations $\Psi_2,\dots,\Psi_{i-1}$. Assume that for each $j\le i-1$, 
\begin{itemize}
\item $\alpha_j$ is the pullback of $\beta_j$, 
\item if $\Phi_j=0$, then $\Psi_j=0$, and 
\item if $\Phi_j=\alpha_j\alpha_j'-\lambda_j$, then $\Psi_j=\beta_j\beta_j'-\gamma_j$ for some $\gamma_j$.
\end{itemize}
\bigskip

\emph{Step 4}. 
Assume $\Phi_i=0$. Then $Z_i=Z_{i-1}\times \Spec k[\alpha_i]$.  Consider the morphism 
$$
\A^1=\Spec k[\alpha_i]\to \A^1=\Spec k[\beta_i]
$$ 
induced by $k[\beta_i]\to k[\alpha_i]$ which sends $\beta_i$ to $\alpha_i$, where $\beta_i$ is a new variable. Let $W_i=W_{i-1}\times \Spec k[\beta_i]$.
Then the two morphisms $Z_{i-1}\to W_{i-1}$ and $\Spec k[\alpha_i]\to \Spec k[\beta_i]$ induce a morphism $\pi_i\colon Z_i\to W_i$ which is \'etale. Let $D_i$ be the inverse image of $D_{i-1}$ plus the vanishing section of $\beta_i$. Then $\pi_i^{-1}D_i=E_i$. We have then constructed 
$$
\xymatrix{
(Z_i,E_i)  \ar[r]\ar[d] &   (W_i,D_i)\ar[d]\\
(Z_{i-1},E_{i-1}) \ar[r] &(W_{i-1},D_{i-1})
}
$$
which extends the diagram in Step 3. In Step 8, we will show that $Z_i\to Z_1\times_{W_1}W_i$ is an open immersion, so the diagram satisfies all the required properties.

\emph{Step 5}. 
From here to the end of Step 7, assume $\Phi_i=\alpha_i\alpha_i'-\lambda_i$ where $\lambda_i$ is a non-zero character in the variables $\alpha_2,\dots,\alpha_{i-1}$, $t_1,\dots,t_p$, say
$$
\lambda_i=\alpha_2^{m_2}\cdots\alpha_{i-1}^{m_{i-1}}t_1^{n_1}\cdots t_p^{n_p}
$$ 
where $m_j,n_j$ are integers. Then 
$$
\lambda_i|_{Z_{i-1}^\circ}=\alpha_2^{m_2}\cdots\alpha_{i-1}^{m_{i-1}}s_1^{n_1}\cdots s_p^{n_p}
$$ 
where as above $Z_{i-1}^\circ$ is the inverse image of $V_{i-1}^\circ$ under the morphism $Z_{i-1}\to V_{i-1}$, 
and 
$$
Z_i\subset Z_{i-1}^\circ\times \A^2
$$ 
is the closed subscheme defined by $\alpha_i\alpha_i'-\lambda_i|_{Z_{i-1}^\circ}$.

We can write $s_j=e_ju^{c_j}$ where $e_j$ is regular and non-vanishing at $z_1$ 
and $c_j$ is a non-negative integer. Shrinking $Z_1$, we can assume $e_j$ are regular 
everywhere but not vanishing at any point. Thus 
$$
\lambda_i|_{Z_{i-1}^\circ}=\alpha_2^{m_2}\cdots\alpha_{i-1}^{m_{i-1}} u^{\sum c_jn_j} e_1^{n_1}\cdots e_p^{n_p}.
$$ 
Note that if $z_1\not\in E_1$, then $z_1$ is mapped into $V_1\setminus C_1$, so none of the $s_j$ vanishes at $z_1$, hence  $\sum c_jn_j=0$, so $u$ does not appear in $\lambda_i|_{Z_{i-1}^\circ}$.

\emph{Step 6}. 
Consider new variables $\beta_i,\beta_i'$ and $\A^2=\Spec k[\beta_i,\beta_i']$. Let 
$$
\gamma_i=\beta_2^{m_2}\cdots\beta_{i-1}^{m_{i-1}} t^{\sum c_jn_j}
$$  
which is a character on $W_{i-1}$. Recall that $W_1=\Spec k[t]$. Let $W_{i-1}^\circ$ be the maximal open subset where $\gamma_i$ is regular. And let 
$$
W_i\subset W_{i-1}^\circ\times \A^2
$$ 
be the closed subscheme defined by $\Psi_i:=\beta_i\beta_i'-\gamma_i$. Let $D_i$ be the inverse image of $D_{i-1}$ under the projection $W_i\to W_{i-1}$. 

Since $Z_{i-1}\to W_{i-1}$ is \'etale, $\gamma_i|_{Z_{i-1}}$ is regular at a closed point $z$ iff $\gamma_i$ is regular at $w=\pi_{i-1}(z)$, by Lemma \ref{l-etale-morphism-regular-points} (note that $Z_{i-1}$ and $W_{i-1}$ are both normal so we can apply the lemma). 
Now 
$$
\gamma_i|_{Z_{i-1}^\circ}=\alpha_2^{m_2}\cdots\alpha_{i-1}^{m_{i-1}} u^{\sum c_jn_j},
$$
where we use the fact that if $z_1\in E_1$, then $t$ pulls back to $u$, but if $z_1\not\in E_1$, then $\sum c_jn_j=0$.
So $\lambda_i|_{Z_{i-1}^\circ}=\gamma_i|_{Z_{i-1}^\circ}g_i$ where $g_i=e_1^{n_1}\cdots e_p^{n_p}$ is regular and nowhere vanishing. 
Therefore, $\gamma_i|_{Z_{i-1}^\circ}$ is regular, hence $Z_{i-1}^\circ$ is mapped into $W_{i-1}^\circ$ by $\pi_{i-1}$.

\emph{Step 7}. 
Consider the morphism 
$$
\phi_i\colon Z_{i-1}^\circ\times \Spec k[\alpha_i,\alpha_i'] \to W_{i-1}^\circ\times\Spec k[\beta_i,\beta_i']
$$
which sends a closed point $(z,a,b)$ to the point $(\pi_{i-1}(z),a,\frac{b}{g_i(z)})$. 
So, $\beta_i$ pulls back to $\alpha_i$ but $\beta_i'$ pulls back to $\frac{\alpha_i'}{g_i}$. 
By Lemma \ref{l-special-subvar-etale}, we get $\pi_i\colon Z_i\to W_i$ decomposing as  
$$
Z_i\to Z_{i-1}^\circ\times_{W_{i-1}^\circ}W_i\to W_i
$$ 
where the former is an isomorphism and the latter is \'etale. 

Recall that $D_i\subset W_i$ is the inverse image of $D_{i-1}\subset W_{i-1}$. Then since $\pi_{i-1}^{-1}D_{i-1}=E_{i-1}$ and since $E_i$ is the inverse image of $E_{i-1}$, we deduce that $\pi_{i}^{-1}D_{i}=E_{i}$.
We have then constructed 
$$
\xymatrix{
(Z_i,E_i)  \ar[r]\ar[d] &   (W_i,D_i)\ar[d]\\
(Z_{i-1},E_{i-1}) \ar[r] &(W_{i-1},D_{i-1})
}
$$
which extends the diagram in Step 3. Therefore, inductively we can construct the whole commutative diagram in the statement of the proposition.

\emph{Step 8.}
It remains to show that $Z_i\to Z_1\times_{W_1}W_i$ is an open immersion. This holds for $i=1$ because $Z_1\to Z_1\times_{W_1}W_1$ is an isomorphism. Assuming the claim holds for $i-1$, we show that it also holds for $i$. 
We have 
\begin{equation}\label{eq-open-immersion}
   Z_{i-1}\to Z_1\times_{W_1}W_{i-1}\to W_{i-1}
\end{equation}
where the former morphism is an open immersion. If $\Psi_i=0$, then taking the product of (\ref{eq-open-immersion}) with $\A^1$ quickly shows that $Z_i\to Z_1\times_{W_1}W_i$ is an open immersion. So assume that $\Psi_i\neq 0$. 
Then (\ref{eq-open-immersion}) induces 
$$
Z_{i-1}^\circ\to Z_1\times_{W_1}W_{i-1}^\circ\to W_{i-1}^\circ
$$ 
where again the former morphism is an open immersion. Taking base change via $W_i\to W_{i-1}^\circ$ we get 
$$
Z_{i-1}^\circ\times_{W_{i-1}^\circ}W_i\to (Z_1\times_{W_1}W_{i-1}^\circ)\times_{W_{i-1}^\circ} W_i\to W_{i}
$$ 
which can be re-written as 
$$
Z_{i-1}^\circ\times_{W_{i-1}^\circ}W_i\to Z_1\times_{W_1}W_{i}\to W_{i}
$$
and the former morphism is an open immersion. Now the claim follows by recalling from Step 7 that we also have an isomorphism 
$
Z_i\to Z_{i-1}^\circ\times_{W_{i-1}^\circ}W_i.
$
\end{proof}

\subsection{Toroidal divisors on pullback of special toric towers}
We further investigate pullbacks of special toric towers. 
Assume again that we are given a special toric tower 
\begin{equation}\label{eq-stt}
   (V_d,C_d)\to  \cdots \to (V_1,C_1) 
\end{equation}
as in \ref{ss-special-toric-tower}(2), and that $({Z}_1,E_1)\to (V_1,C_1)$ is a morphism of couples from a log smooth couple of dimension one. Let
$$
    ({Z}_d,{E}_d)\to \cdots \to ({Z}_1,E_1)
$$
be the pullback of (\ref{eq-stt}) by base change to $(Z_1,E_1)$ as in \ref{ss-special-toric-tower}(5).

Also recall from \ref{ss-special-toric-tower}(4) that we have a congruent birational map 
$$
(V_d,C_d) \bir (P=\PP^{d-1}_{V_1},G)
$$
over $V_1$, where $G$ is the sum of the coordinate hyperplanes of $P$ and the inverse image of $C_1$.
Pullback by base change to $({Z}_1,E_1)$ induces a 
congruent birational map 
$$
(Z_d,E_d) \bir (P'=\PP^{d-1}_{Z_1},G')
$$
over $Z_1$, where $G'$ is the sum of the coordinate hyperplanes of $P'$ and the inverse image of $E_1$.

\begin{lem}\label{l-lc-place-pullback-special-toric-tower}
Under the above assumptions, $(Z_d,E_d)$ is lc and any lc place of $(Z_d,E_d)$ is an lc place of $(P',G')$.
\end{lem}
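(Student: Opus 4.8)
The plan is to pull the problem back, via Proposition \ref{p-bchange-special-tower}, to the explicit toric situation --- where both $(Z_d,E_d)$ and $(P',G')$ become \'etale over toric couples with full toric boundary --- and then to compare lc places fan‑theoretically. So first I would record the toroidal (local) models. Fix a closed point $z_1\in Z_1$; after shrinking $Z_1$ around $z_1$, Proposition \ref{p-bchange-special-tower} gives an \'etale morphism $\pi_d\colon Z_d\to W_d$ with $E_d=\pi_d^{-1}D_d$, where $(W_d,D_d)$ sits in a special toric tower. By Lemma \ref{l-fam-over-toric-pairs} and \ref{ss-special-toric-tower}(2), $(W_d,D_d)$ is a normal toric couple; an inductive inspection of the tower shows $D_d$ is the full toric boundary of $W_d$, so $K_{W_d}+D_d\sim 0$ and $(W_d,D_d)$ is toroidal, hence lc with $K_{W_d}+D_d$ Cartier (Lemma \ref{l-toroidal-couple-lc}). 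As $\pi_d$ is \'etale with $E_d=\pi_d^{-1}D_d$, the couple $(Z_d,E_d)$ is toroidal near the fibre over $z_1$; as $z_1$ is arbitrary, $(Z_d,E_d)$ is toroidal, hence lc by Lemma \ref{l-toroidal-couple-lc} --- this is the first assertion. Likewise, writing $W_1=\A^1$, $P_W=\PP^{d-1}_{W_1}$ and $G_W$ for its toric boundary, one has $P'=\PP^{d-1}_{Z_1}=Z_1\times_{W_1}P_W$, with $P'\to P_W$ \'etale (base change of $\pi_1$) and $G'$ the preimage of $G_W$; hence $(P',G')$ is toroidal and lc, with $K_{P'}+G'$ Cartier and locally $\sim 0$.

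Next I would transport the congruent birational maps. By \ref{ss-special-toric-tower}(4), applied to the $V$‑tower and to the $W$‑tower, there are toric congruent birational maps $(V_d,C_d)\bir(P,G)$ and $(W_d,D_d)\bir(P_W,G_W)$, and $(Z_d,E_d)\bir(P',G')$ is the pullback of the former to $Z_1$. The key point is that the square
$$
\xymatrix{
(Z_d,E_d)\ar@{-->}[r]\ar[d]^{\pi_d} & (P',G')\ar[d] \\
(W_d,D_d)\ar@{-->}[r] & (P_W,G_W)
}
$$
commutes, which I would verify on the dense tori: by the constructions in \ref{ss-special-toric-tower} and in Proposition \ref{p-bchange-special-tower}, all four maps are the identity in the coordinates $\alpha_2,\dots,\alpha_d$ and act on the base through $\pi_1\colon Z_1\to W_1$, so the two ways round the square agree there, hence as rational maps.

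Since $(W_d,D_d)$ and $(P_W,G_W)$ are toric couples with full toric boundary, $K_{W_d}+D_d\sim 0$ and $K_{P_W}+G_W\sim 0$; and by \ref{ss-special-toric-tower}(4) every toric prime divisor over $W_d$ is one over $P_W$, i.e.\ the support of the fan of $W_d$ is contained in that of $P_W$. Taking a smooth fan refining the fan of $P_W$ in which the support of the fan of $W_d$ appears as a subcomplex whose induced subdivision refines the fan of $W_d$, I obtain a smooth toric variety $\widetilde{T}\to P_W$, proper birational, together with an open toric subvariety $\widetilde{W}\subseteq\widetilde{T}$ with $\widetilde{W}\to W_d$ proper birational, both crepant for the toric boundaries. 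Since log discrepancies are local, this gives $a(\mathfrak D,W_d,D_d)=a(\mathfrak D,P_W,G_W)$ for every prime divisor $\mathfrak D$ over $W_d$, so lc places of $(W_d,D_d)$ are lc places of $(P_W,G_W)$.

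Finally I would combine these. Given an lc place $\mathfrak D$ of $(Z_d,E_d)$, pick a closed point in its centre on $Z_d$, let $z_1$ be its image in $Z_1$, and apply the above for this $z_1$. \'Etale morphisms preserve log discrepancies (cf.\ Lemmas \ref{l-etale-morphism-regular-points} and \ref{l-desced-prime-div}), so $\mathfrak D$ corresponds under $\pi_d$ to a prime divisor $\mathfrak D_W$ over $W_d$ with $a(\mathfrak D_W,W_d,D_d)=a(\mathfrak D,Z_d,E_d)=0$ and centre over $w_1=\pi_1(z_1)$, whence $a(\mathfrak D_W,P_W,G_W)=0$. The centre of $\mathfrak D_W$ on $P_W$ still lies over $w_1$, so it meets the image of the \'etale morphism $P'\to P_W$, and $\mathfrak D_W$ lifts to a prime divisor $\mathfrak D'$ over $P'$ with $a(\mathfrak D',P',G')=0$; by the commuting square, $\mathfrak D'$ is $\mathfrak D$ regarded as a divisor over $P'$ through $(Z_d,E_d)\bir(P',G')$. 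Hence $\mathfrak D$ is an lc place of $(P',G')$. I expect the main obstacle to be the toric comparison $a(\mathfrak D,W_d,D_d)=a(\mathfrak D,P_W,G_W)$ for divisors $\mathfrak D$ that are not themselves toric --- this rests on the inclusion of fan supports from \ref{ss-special-toric-tower}(4) and on producing a common smooth toric refinement with the required subcomplex structure --- while checking the commuting square is routine but needs careful bookkeeping with the coordinates $\alpha_i$.
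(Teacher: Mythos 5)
Your proof is correct, but it takes a genuinely different route from the paper's. Both begin by passing to the $(W_\bullet,D_\bullet)$-tower via Proposition \ref{p-bchange-special-tower} and descending the lc place along the \'etale $\pi_d$. The divergence is in how that lc place is transported to $(P',G')$. You verify that the square of maps $Z_d\bir P'$, $W_d\bir P_W$, $\pi_d$, $P'\to P_W$ commutes and then prove the strong statement $a(\mathfrak D,W_d,D_d)=a(\mathfrak D,P_W,G_W)$ for \emph{every} prime divisor $\mathfrak D$ over $W_d$, via a common crepant smooth toric model. The paper does neither: it needs the discrepancy equality only for the one divisor $R$, and gets it for free because an lc place of the toric couple $(W_d,D_d)$ is automatically a \emph{toric} divisor, hence by \ref{ss-special-toric-tower}(4) a toric divisor, and so an lc place, over $\overline{P}$; and rather than checking your square it remarks explicitly that it does not know whether $Z_d\bir P'$ and $Z_d\bir P''$ (through $W_d$) coincide, so it introduces the second copy $(P'',G'')$, shows $S$ is an lc place there, and then compares $(P',G')$ with $(P'',G'')$ directly using their common birational model $Z_d\setminus E_d$ together with the observation $\Delta'\le G'$ for the crepant pullback $K_{P'}+\Delta'$ of $K_{P''}+G''\sim 0/Z_1$. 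Your route avoids $P''$ entirely, but pays for it with exactly the two points you flag as delicate. On one of them there is a lighter argument than your fan refinement: both $(W_d,D_d)$ and $(P_W,G_W)$ are normal toric couples with full toric boundary, so with the canonical toric representatives $K=-B$ (the divisor of $\frac{dt_1}{t_1}\wedge\cdots\wedge\frac{dt_d}{t_d}$, which the congruent birational map identifies up to sign) we have $K_{W_d}+D_d=0=K_{P_W}+G_W$ as divisors; on any common resolution $Y$ both crepant boundaries then equal $-K_Y$, giving the discrepancy equality for all $\mathfrak D$ at once, without any subdivision of fans.
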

\begin{proof}
The statement is local over $Z_1$, so we may fix a point $z_1\in Z_1$ and shrink $Z_1$ around it.
In particular, we can assume that $E_1=0$ if $z_1\notin E_1$ and $E_1=z_1$ if $z_1\in E_1$. 
By Proposition \ref{p-bchange-special-tower}, after further shrinking $Z_1$ around $z_1$, there exist a special toric tower 
$$
(W_d,D_d)\to \cdots \to (W_1,D_1)
$$ 
and a commutative diagram of couples
$$
\xymatrix{
(Z_d,E_d) \ar[d]\ar[r] &(W_d,D_d) \ar[d] \\
(Z_1,E_1)  \ar[r] &   (W_1,D_1)
}
$$
where the horizontal morphisms are \'etale, $E_d$ is the inverse image of $D_d$, and $E_1$ is the inverse image of $D_1$. 
Moreover, the induced morphism $Z_d\to Z_1\times_{W_1}W_d$ is an open immersion.
By \ref{ss-special-toric-tower}(2), $(W_d,D_d)$ is lc as it is a normal toric couple. Thus $(Z_d,E_d)$ is also lc as singularities are determined locally formally. 

Assume $S$ is an lc place of $(Z_d,E_d)$. We will argue that $S$ is an lc place of $(P',G')$. First, $S$ determines an lc place $R$ of $(W_d,D_d)$, making use of Lemma \ref{l-desced-prime-div}. Moreover, by \ref{ss-special-toric-tower}(4), we have a (toric) congruent birational map 
$$
(W_d,D_d)\bir (\overline{P}=\PP^{d-1}_{W_1}, \overline{G})
$$ 
over $W_1$, where $\overline{G}$ is the sum of the coordinate hyperplanes of $\overline{P}$ and the inverse image of $D_1$. Then $R$ is an lc place of $(\overline{P},\overline{G})$ as $R$ is a toric divisor.

Taking pullback of $(W_d,D_d)\bir (\overline{P},\overline{G})$ by base change to $(Z_1,E_1)$ gives a congruent birational map
$$
(W_d'',D_d'')\bir (P''=\PP^{d-1}_{Z_1},G'').
$$
Note that here $W_d''=Z_1\times_{W_1}W_d$.
Since the induced morphism $Z_d\to W_d''$ is an open immersion and $E_d=D_d''|_{Z_d}$, 
we get an open immersion 
$$
Z_d\setminus E_d\to W_d''\setminus D_d''.
$$
This in turn induces a birational map 
$$
Z_d \bir P''
$$ 
and an open immersion 
$$
Z_d\setminus E_d\to P''\setminus G''.
$$ 

Now $S$ is an lc place of $(P'',G'')$ because $S$ maps onto $R$. Moreover, the isomorphism 
$$
Z_d\setminus E_d\to P'\setminus G'
$$
and the open immersion 
$$
Z_d\setminus E_d\to P''\setminus G''
$$
 induce an open immersion 
$$
P'\setminus G'\to P''\setminus G''
$$ 
and a birational map $P'\bir P''$.

Denote the pullback of $K_{P''}+G''$ to $P'$, under the said birational map, by $K_{P'}+\Delta'$.
Then $\Supp \Delta'\le G'$, and since $(P'',G'')$ is lc, we deduce that $\Delta'\le G'$. Therefore, from $K_{P''}+G''\sim 0/Z_1$, we see that 
$$
0\le a(S,P',G')\le a(S,P',\Delta')=a(S,P'',G'')=0,
$$ 
hence $S$ is also an lc place of $(P',G')$.

Note that in fact $(P'',G'')$ is isomorphic to $(P',G')$ in an abstract sense but we do not know if the map $Z_d\bir P''$ is the same as the map $Z_d\bir P'$, so to avoid confusion we have used different notation.
\end{proof}

\subsection{Special toric towers of a tower of families of nodal curves}

\begin{prop}\label{p-local-desc-fix-tower}
Let 
$$
(V_d,C_d)\to \cdots \to (V_1,C_1)
$$ 
be a good tower of families of split nodal curves (as in \ref{ss-tower-family-nodal-curves}) where $(V_1,C_1)$ is log smooth. 
Then there exist finitely many commutative diagrams 
$$
\xymatrix{
(V_d,C_d) \ar[d] & W_d\ar[l]  \ar[dd] \ar[r]  &(V_d'',C_d'') \ar[d] \\
\vdots \ar[d]&  & \vdots \ar[d] \\
(V_{1},C_1)   &  \ar[l] W_1\ar[r] & (V_{1}'',C_1'')
}
$$
 satisfying the following. Let $v_d\in V_d$ be a closed point and $v_i\in V_i$ be its image. 
Then we can choose one of the above diagrams and a closed point $w_d\in W_d$ mapping to
$v_d$ and to $v_i''\in V_i''$ such that 
\begin{itemize}
\item $W_d\to V_d$ and $W_d\to V_d''$ are \'etale  and the inverse images of $C_d$ and 
$C_d''$ coincide near $w_d$, 

\item $W_1\to V_1$ is an open immersion, $W_1\to V_1''$ is \'etale, and the inverse image of $C_1''$ coincides with $C_1|_{W_1}$ near $v_1$, 

\item the tower 
$$
(V_d'',C_d'') \to \cdots \to (V_{1}'',C_{1}'')
$$ 
is a special toric tower as in \ref{ss-special-toric-tower}(2),

\item and
$$
(V_i'',C_i''),v_i'' \to (V_{i-1}'',C_{i-1}''),v_{i-1}''
$$ 
is a local toric model of 
$$
(V_i,C_i),v_i \to (V_{i-1},C_{i-1}),v_{i-1}
$$ 
for each $i>1$ (induced by the morphisms $W_j\to V_j$ and $W_j\to V_j''$).
\end{itemize}
\end{prop}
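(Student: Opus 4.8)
I would prove Proposition \ref{p-local-desc-fix-tower} by induction on $d$, constructing the toric tower $V_d''\to\cdots\to V_1''$ one level at a time. In the base case $d=1$, the couple $(V_1,C_1)$ is log smooth, so $\dim V_1=p$ for some fixed $p$ and at any closed point $v_1$ at most $p$ components of $C_1$ pass. I would pick a regular system of parameters $x_1,\dots,x_p$ of $\mathcal{O}_{V_1,v_1}$ with $x_1,\dots,x_q$ local equations of the components through $v_1$, shrink $V_1$ to an open $W_1$ on which the $x_i$ are regular and $C_1|_{W_1}=\{x_1\cdots x_q=0\}$, and take $W_1\to V_1''=\A^p=\Spec k[t_1,\dots,t_p]$ to send $t_i\mapsto x_i$ for $i\le q$ and $t_i\mapsto x_i+a_i$ for $i>q$, where $a_i\ne 0$ are fixed constants. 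An affine shift does not change differentials, so this is étale near $v_1$; after shrinking $W_1$ away from the divisors $\{x_i+a_i=0\}$ (which avoid $v_1$), it pulls $C_1''=\{t_1\cdots t_p=0\}$ back to $C_1|_{W_1}$, and $W_1\hookrightarrow V_1$ is an open immersion. Quasi-compactness of $V_1$ gives finitely many such $W_1$.

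For the inductive step I would assume the statement for the truncation $(V_{d-1},C_{d-1})\to\cdots\to(V_1,C_1)$. Given a closed point $v_d\in V_d$ with images $v_i$, I pick a level-$(d-1)$ diagram with space $W_{d-1}$ (étale over $V_{d-1}$ and over $V_{d-1}''$), point $w_{d-1}$ over $v_{d-1}$ with image $v_{d-1}''$, and toric tower $V_{d-1}''\to\cdots\to V_1''$; after shrinking $W_{d-1}$ around $w_{d-1}$ the pullbacks of $C_{d-1}$ and $C_{d-1}''$ to $W_{d-1}$ agree. Now I split into the two cases of Lemma \ref{l-nodal-curves-total-space}. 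If $g_d\colon V_d\to V_{d-1}$ is smooth at $v_d$, then an étale neighbourhood $U\to V_d$ of $v_d$ factors as an étale morphism $U\to\A^1_{V_{d-1}}$ followed by the projection; as in the proof of Proposition \ref{p-fam-nod-curves-over-toric} I may arrange $U\to\A^1_{V_{d-1}}$ to carry the (at most one) horizontal component of $C_d$ through $v_d$ to the zero section of the new variable $\alpha_d$, and otherwise to send $v_d$ to $(v_{d-1},1)$. I set $V_d'':=V_{d-1}''\times\A^1$ with $C_d''$ the inverse image of $C_{d-1}''$ plus the vanishing section of $\alpha_d$ (type $(a)$ of \ref{ss-special-toric-tower}(2)), and $W_d:=U\times_{V_{d-1}}W_{d-1}$; base-changing the two étale morphisms out of $W_{d-1}$ yields étale morphisms $W_d\to V_d$ and $W_d\to\A^1_{W_{d-1}}\to V_d''$ sitting in the required commutative square, and near the point $w_d$ over $w_{d-1}$ and $v_d$ the pullbacks of $C_d$ and $C_d''$ coincide.

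The substantive case is when $v_d$ is a node of $g_d$. Then Lemma \ref{l-nodal-curves-total-space} gives $\widehat{\mathcal{O}}_{V_d,v_d}\cong\widehat{\mathcal{O}}_{V_{d-1},v_{d-1}}[[\alpha,\beta]]/(\alpha\beta-\lambda)$ with $\Div(\lambda)$ supported in $\widehat{C}_{d-1}$ (since $g_d$ is smooth over $V_{d-1}\setminus C_{d-1}$); as the components of $C_{d-1}$ are divisors, $\Div(\lambda)$ is the completion of an effective divisor $H=\sum l_iC_{d-1,i}$ that is Cartier near $v_{d-1}$ by Lemma \ref{l-formally-cartier}, so writing $H=\Div(\lambda_0)$ with $\lambda_0$ regular near $v_{d-1}$ and absorbing the resulting unit into $\alpha$ I may take $\lambda=\lambda_0$. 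Transporting $H$ to $H''=\sum l_iC_{d-1,i}''$ via the local toric model, the identity $\pi_{d-1}^*H''=\Div(\pi_{d-1}^*\lambda_0)$ near $w_{d-1}$ shows $H''$ is Cartier near $v_{d-1}''$, hence $H''=\Div(\gamma)$ for a character $\gamma$ on a torus-invariant open $(V_{d-1}'')^\circ\ni v_{d-1}''$ by Lemma \ref{l-Cartier-near-point-toric}. Shrinking $W_{d-1}$ into $(V_{d-1}'')^\circ$, the ratio $g:=\pi_{d-1}^*\lambda_0/\pi_{d-1}^*\gamma$ is a nowhere vanishing regular function on $W_{d-1}$. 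I then set $V_d''\subseteq(V_{d-1}'')^\circ\times\A^2$ to be $\{\beta_d\beta_d'=\gamma\}$ with $C_d''$ the inverse image of $C_{d-1}''$ (type $(b)$ of \ref{ss-special-toric-tower}(2)), and $\tilde Y:=\{\alpha_d\alpha_d'=\pi_{d-1}^*\lambda_0\}\subseteq W_{d-1}\times\A^2$, which by Lemma \ref{l-special-subvar-etale} is naturally isomorphic to $W_{d-1}\times_{V_{d-1}''}V_d''$ and so admits an étale morphism to $V_d''$ over $W_{d-1}$; note $W_{d-1}$ is normal, being étale over the normal toric variety $V_{d-1}''$. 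On the other hand $\tilde Y$ and $V_d\times_{V_{d-1}}W_{d-1}$, the latter a family of split nodal curves by Lemma \ref{l-nodal-curves-base-change}, have formally isomorphic local rings over $\widehat{\mathcal{O}}_{W_{d-1},w_{d-1}}$ at their nodes over $w_{d-1}$, so a relative version of [\ref{Artin}, Corollary 2.6] over $W_{d-1}$ supplies a common étale neighbourhood $W_d$; composing, $W_d\to V_d$ and $W_d\to V_d''$ are étale, lie over $W_{d-1}\to V_{d-1}$ and $W_{d-1}\to V_{d-1}''$, and match the pullbacks of $C_d$ and $C_d''$ near the chosen point $w_d$.

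Finally I would observe that in either case appending $V_d''$ on top of the inherited tower gives a single special toric tower $V_d''\to\cdots\to V_1''$; since $W_d$ is a common étale neighbourhood of $(V_d,C_d),v_d$ and $(V_d'',C_d''),v_d''$ sitting over $W_{d-1}$, and $V_d''\to V_{d-1}''$ is a toric morphism, the completed-local-ring square shows $(V_d'',C_d''),v_d''\to(V_{d-1}'',C_{d-1}''),v_{d-1}''$ is a local toric model of $(V_d,C_d),v_d\to(V_{d-1},C_{d-1}),v_{d-1}$, while the level-$1$ assertions are inherited verbatim; note that the construction around $v_d$ uses only Zariski shrinkings of the lower $W_i$, so $W_1\to V_1$ stays an open immersion. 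The data of the construction depends on $v_d$ only through finitely many discrete choices (which level-$(d-1)$ diagram, which case, how many coordinates, whether a horizontal section passes), so quasi-compactness of $V_d$ leaves finitely many diagrams. The hard part will be the nodal case: promoting the purely formal normal form of Lemma \ref{l-nodal-curves-total-space} to an étale-local description with an honest regular function $\lambda_0$ on $W_{d-1}$, while keeping the new étale chart compatible with the ones already produced lower in the tower — this is exactly what the relative Artin approximation statement buys; the remaining work, checking that the $V_i''$ assemble into one special toric tower and that all the claimed coincidences of boundaries hold, is bookkeeping.
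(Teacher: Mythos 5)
Your proposal follows the paper's proof in structure and in all of its key ingredients: induction on $d$; the formal normal form of a split nodal family from Lemma \ref{l-nodal-curves-total-space}; turning the gluing element $\lambda$ into a character $\gamma$ on the toric side via Lemmas \ref{l-formally-cartier} and \ref{l-Cartier-near-point-toric}; Lemma \ref{l-special-subvar-etale} to identify $\tilde Y$ with $U_d''=W_{d-1}\times_{V_{d-1}''}V_d''$; and Artin approximation [\ref{Artin}, Corollary 2.6], applied over $W_{d-1}$, to manufacture the common \'etale neighbourhood $W_d$. The main presentational difference is that you re-derive inline what the paper encapsulates as Proposition \ref{p-fam-nod-curves-over-toric}; the paper's Step~3 simply invokes that proposition to produce the local toric model at level $d$, and its Step~4 is the Artin step you reproduce.

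There is one genuine gap in the nodal case that you should repair. You write ``as the components of $C_{d-1}$ are divisors, $\Div(\lambda)$ is the completion of an effective divisor $H=\sum l_iC_{d-1,i}$, that is Cartier near $v_{d-1}$ by Lemma \ref{l-formally-cartier}.'' The inference from ``$\Supp\Div(\lambda)\subseteq\widehat{C}_{d-1}$'' to ``$\Div(\lambda)=\sum l_i\widehat{C}_{d-1,i}$'' is not justified by the $C_{d-1,i}$ merely being divisors: if some $C_{d-1,i}$ were analytically reducible at $v_{d-1}$, then $\Div(\lambda)$ could assign distinct multiplicities to the separate formal branches of a single $C_{d-1,i}$, in which case it is not the completion of any global divisor supported on $C_{d-1}$ and Lemma \ref{l-formally-cartier} does not apply. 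What you need is that each $\widehat{C}_{d-1,i}$ is a prime divisor or zero; this is exactly why the paper's Step~2--3 of the proof of Proposition \ref{p-fam-nod-curves-over-toric} first transports $\widehat{H}$ across the formal isomorphism to the toric model $\widehat{V}_{d-1}''$, where the boundary components $E_i'$ are normal (being torus-invariant in a normal toric variety), so their formal germs are automatically prime. You can fix your argument by either invoking the toroidality of $(V_{d-1},C_{d-1})$ at $v_{d-1}$ (available from your inductive hypothesis) to conclude the $\widehat{C}_{d-1,i}$ are prime, or by reordering as the paper does: pass to $\widehat{V}_{d-1}''$ first and only then decompose the transported divisor along the toric boundary. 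The rest of your argument --- the smooth case, the use of $g=\pi_{d-1}^*\lambda_0/\pi_{d-1}^*\gamma$, the compatibility over $W_{d-1}$ in the Artin step, and the finiteness argument via quasi-compactness --- matches the paper.
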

\begin{proof}
\emph{Step 1.}
We apply induction on $d$. Assume $p=\dim V_1$. Let
$$
V_1''=\A^p=\Spec k[t_1,\dots,t_p]
$$ 
and $C_1''$ be the vanishing set of $t_1\cdots t_p$. 
If $d=1$, then since $(V_1,C_1)$ is log smooth, we can find an open neighbourhood $W_1$ of 
$v_1$ and an \'etale morphism $W_1\to V_1''$ so that $C_1|_{W_1}$ is the inverse image of 
$C_1''$. If $v_1''\in V_1''$ is the image of $v_1$, then 
$(V_1'',C_1''),v_1''$ is a local toric model of $(V_1,C_1),v_1$. Since $V_1$ is quasi-compact, we need only finitely many such $W_1$. 
We then assume $d\ge 2$.

\emph{Step 2.}
We can assume the proposition holds for $d-1$. Then there exist finitely many diagrams 
$$
\xymatrix{
(V_{d-1},C_{d-1}) \ar[d] & W_{d-1}\ar[l]  \ar[dd] \ar[r] &(V_{d-1}'',C_{d-1}'') \ar[d] \\
\vdots \ar[d]&  & \vdots \ar[d] \\
(V_{1},C_1)   &  \ar[l] W_1\ar[r] & (V_{1}'',C_1'')
}
$$
satisfying the properties listed in the proposition. 
Choose one of these diagrams for the point $v_{d-1}\in V_{d-1}$. 
Assume the special toric tower 
$$
(V_{d-1}'',C_{d-1}'') \to \cdots \to (V_{1}'',C_{1}'')
$$ 
is defined by equations 
$\Phi_2,\dots,\Phi_{d-1}$ using variables $\alpha_2,\dots,\alpha_{d-1}$; if $\Phi_i\neq 0$, then we also have another variable $\alpha_i'$. Also, inductively,  $(V_{d-1},C_{d-1})$ is toroidal.

\emph{Step 3.}
By definition of a good tower of families of split nodal curves (see \ref{ss-tower-family-nodal-curves}), $(V_d,C_d)\to (V_{d-1},C_{d-1})$ satisfies the assumptions of Proposition \ref{p-fam-nod-curves-over-toric}. Here we view $(V_{d-1}'',C_{d-1}''),v_{d-1}''$ as a local toric model of $(V_{d-1},C_{d-1}),v_{d-1}$ via the \'etale morphisms $W_{d-1}\to V_{d-1}$ and $W_{d-1}\to V_{d-1}''$. 
By the proof of Proposition \ref{p-fam-nod-curves-over-toric}, there is a local toric model 
$$
(V_d'',C_d''),v_d'' \to (V_{d-1}'',C_{d-1}''),v_{d-1}''
$$ 
of 
$$
(V_d,C_d),v_d \to (V_{d-1},C_{d-1}),v_{d-1}
$$ 
where one of the following two cases occurs. If $V_d\to V_{d-1}$ is smooth at $v_d$, 
then  $V_d''=V_{d-1}''\times \A^1$, $\Phi_d=0$, $C_d''$ is the inverse image of $C_{d-1}''$ plus the vanishing section of a new variable $\alpha_d$ on $\A^1$. 
 But if $V_d\to V_{d-1}$ is not smooth at $v_d$, then   
$$
V_d''\subset V_{d-1}''^\circ\times \A^2
$$ 
 is the closed subscheme defined by some equation
$\Phi_d=\alpha_d\alpha_{d}'-\lambda_d$ where $\lambda_d$ is a non-zero character in  $\alpha_2,\dots, \alpha_{d-1},t_1,\dots,t_p$ and $\alpha_{d},\alpha_{d}'$ are new variables on $\A^2$.
Moreover, in this case, $C_d''$ is the inverse image of $C_{d-1}''$. 
In any case, the tower 
$$
(V_{d}'',C_{d}'') \to \cdots \to (V_{1}'',C_{1}'')
$$ 
is a special toric tower defined by 
$\Phi_2,\dots,\Phi_d$.

\emph{Step 4.}
Now we explain how to get $W_d$. Let 
$$
U_d=V_d\times_{V_{d-1}}W_{d-1} \ \ \mbox{and} \ \ U_d''=V_d''\times_{V_{d-1}''}W_{d-1}.
$$ 
Since $v_d,w_{d-1}$ map to $v_{d-1}$, 
there exists a closed point $u_d\in U_d$ mapping to $v_d,w_{d-1}$. Similarly, there exists a closed point 
$u_d''\in U_d''$ mapping to $v_d'',w_{d-1}$. Moreover, since we viewed $(V_{d-1}'',C_{d-1}''),v_{d-1}''$ as a local toric model of $(V_{d-1},C_{d-1}),v_{d-1}$ via the \'etale morphisms $W_{d-1}\to V_{d-1}$ and $W_{d-1}\to V_{d-1}''$,
we get an induced commutative diagram  
$$
\xymatrix{
\widehat{\mathcal{O}}_{U_d,u_d} \ar[rr] & & \widehat{\mathcal{O}}_{U_d'',u_d''} \\
& \ar[lu] \widehat{\mathcal{O}}_{W_{d-1},w_{d-1}} \ar[ru] & 
}
$$
of completions of local rings 
where the horizontal arrow is an isomorphism. 
Therefore, by  [\ref{Artin}, Corollary 2.6], there is a common \'etale neighbourhood $W_d$ of 
$u_d$ and $u_d''$ giving a commutative diagram 
$$
 \xymatrix{
&W_d \ar[ld]\ar[rd]&\\
U_d\ar[rd] & &U_d'' \ar[ld] \\
&W_{d-1}&
}
$$
where some closed point $w_d\in W_d$ is mapped to $u_d,u_d''$. 
Thus the induced morphisms $W_d\to V_d$ and $W_d\to V_d''$ are \'etale  
fitting into a diagram as in the statement of the proposition.

We show that we can make sure the inverse images of $C_d,C_d''$ coincide near $w_d$. If $V_d\to V_{d-1}$ is not smooth at $v_d$, then near $v_d$, $C_d$ is the inverse image of $C_{d-1}$, and similarly, near $v_d''$, $C_d''$ is the inverse image of $C_{d-1}''$, hence the claim follows as the inverse images of $C_{d-1},C_{d-1}''$ to $W_{d-1}$ coincide near $w_{d-1}$. So assume $V_d\to V_{d-1}$ is smooth at $v_d$. We can assume that $v_d$ belongs to a (unique) horizontal$/V_{d-1}$ component $T_d$ of $C_d$ otherwise the same reasoning applies. According to the proof of Proposition \ref{p-fam-nod-curves-over-toric}, letting $\A^1=\Spec k[\alpha_d]$, there exist an open neighbourhood $\tilde{V}_d$ of $v_d$ and an \'etale morphism $\tilde{V}_d\to V_{d-1}\times \A^1$ so that the inverse image of the vanishing section of $\alpha_d$ to $\tilde{V}_d$ coincides with $T_d$, near $v_d$. Also $V_d''=V_{d-1}''\times \A^1$ and $v_d''$ belongs to the corresponding vanishing section. 
But then base change of $V_{d-1}\times \A^1\to V_{d-1}$ to $W_{d-1}$ is just $U_d''\to W_{d-1}$. Therefore, we get an 
induced \'etale morphism from the neighbourhood $\tilde{U}_d=\tilde{V}_d\times_{V_{d-1}}W_{d-1}$ of $u_d$ onto a neighbourhood of $u_d''$ so that the inverse image of the vanishing section of $\alpha_d$ on $U_d''$ coincides with the inverse image of $T_d$ near $u_d$. We can then replace $W_d$ with $\tilde{U}_d$ which then ensures that the inverse images of $C_d,C_d''$ coincide near $w_d$.    

\emph{Step 5.}
Finally, note that after shrinking $W_d$ around $w_d$ we can assume 
the diagram works for any closed point in the image of $W_d\to V_d$. Indeed, assume $\tilde{w}_d\in W_d$ is a closed point and $\tilde{v}_i\in V_i$ and $\tilde{v}_i''\in V_i''$ are its images. By construction, shrinking $W_d$ around $w_d$ if necessary, we can assume that for each $i\ge 2$, in the commutative diagram 
$$
\xymatrix{
(V_{i},C_{i}) \ar[d] & W_{i}\ar[l]  \ar[d] \ar[r]  &(V_{i}'',C_{i}'') \ar[d] \\
(V_{i-1},C_{i-1})   &  \ar[l] W_{i-1}\ar[r] & (V_{i-1}'',C_{i-1}'')
}
$$
the inverse images of $C_i$ and $C_i''$ coincide near $\tilde{w}_i\in W_i$, and the inverse images of $C_{i-1}$ and $C_{i-1}''$ coincide near $\tilde{w}_{i-1}\in W_{i-1}$, where $\tilde{w}_i,\tilde{w}_{i-1}$ are the images of $\tilde{w}_d$. This shows that 
$$
(V_i'',C_i''),\tilde{v}_i'' \to (V_{i-1}'',C_{i-1}''),\tilde{v}_{i-1}''
$$ 
is a local toric model of 
$$
(V_i,C_i),\tilde{v}_i \to (V_{i-1},C_{i-1}),\tilde{v}_{i-1}.
$$ 

Therefore, we only 
need finitely many diagrams as in the proposition since $V_d$ is quasi-compact and it is covered by the images of the $W_d$. 
\end{proof}


\subsection{Models of bounded toroidalisations}

\begin{proof}[Proof of Theorem \ref{p-local-desc-torif-bnd-fib-II}]
\emph{Step 1.}
The case $d=1$ holds trivially, so assume $d\ge 2$. 
As in the proof of Theorem \ref{t-bnd-torification}, we can reduce to the situation where we  
have a fixed couple $(V,C)$ and a surjective projective morphism $V\to T$ such that we have a morphism $Z\to T$ with $X=Z\times_TV$ and $D=Z\times_TC$.  
We borrow the notation and constructions of that proof; in particular, recall the good tower of families of split nodal 
curves 
\begin{equation}\label{eq-model-tsnc}
    (V_d,C_d) \to \cdots \to (V_1,C_1)
\end{equation}
which is an altering of $(V,C)\to T$ where $(V_1,C_1)$ is log smooth. Also  
recall that $X_1'$ is the normalisation of a component of $Z\times_TV_1$ dominating $Z$, and 
\begin{equation}\label{eq-model-pullback-tsnc}
   (X_d',D_d') \to \cdots \to (X_1',D_1')
\end{equation}
is the pullback of (\ref{eq-model-tsnc}) by base change to $(X_1',\rho^{-1}\{z\})$ where $\rho$ denotes $X_1'\to Z$; see  \ref{ss-tower-family-nodal-curves}(3). By the proof of Theorem \ref{t-bnd-torification}, we have $X_d'=X_1'\times_{V_1}V_d$. Also recall that at the end of that proof we put 
$$
(X',D')=(X_d',D_d') \ \ \mbox{and} \ \ (Z',E')=(X_1',D_1').
$$

Note that the tower (\ref{eq-model-pullback-tsnc}) is the same as the pullback of (\ref{eq-model-tsnc}) by base change to $(X_1',D_1')$ in the sense of \ref{ss-tower-couples}(2): indeed, by definition, $D_i'$ is the support of the sum of the inverse images of $C_i$ and $\rho^{-1}\{z\}$ to $X_i'$; in particular, $D_1'$ is the support of the sum of the inverse images of $C_1$ and $\rho^{-1}\{z\}$; but the inverse image of $C_1$ to $V_i$ is contained in $C_i$, hence $D_i'$ is equal to the support of the sum of the inverse images of $C_i$ and $D_1'$ to $X_i'$ as claimed.

\emph{Step 2.}
Let $x'=x_d'\in X'=X_d'$ be a closed point and let $x_i'\in X_i'$ and $v_i\in V_i$ be its images. We also denote $z'=x_1'$.
 By Proposition \ref{p-local-desc-fix-tower}, there exist finitely many diagrams 
 $$
\xymatrix{
(V_d,C_d) \ar[d] & W_d\ar[l]  \ar[dd] \ar[r] &(V_d'',C_d'') \ar[d] \\
\vdots \ar[d]&  & \vdots \ar[d] \\
(V_{1},C_1)   &  \ar[l] W_1\ar[r] & (V_{1}'',C_1'')
}
$$
satisfying the properties listed in that proposition. By the proposition, we can choose one of these diagrams (for the point $v_d\in V_d$) so that there is a closed point $w_d\in W_d$ mapping to $v_d$ and to $v_i''\in V_i''$ such that 
\begin{itemize}
\item $W_d\to V_d$ and $W_d\to V_d''$ are \'etale  and the inverse images of $C_d$ and 
$C_d''$ coincide near $w_d$, 

\item $W_1\to V_1$ is an open immersion, $W_1\to V_1''$ is \'etale, and the inverse image of $C_1''$ coincides with $C_1|_{W_1}$ near $v_1$,

\item the tower 
$$
(V_d'',C_d'') \to \cdots \to (V_{1}'',C_{1}'')
$$ 
is a special toric tower as in \ref{ss-special-toric-tower}(2),

\item and for each $i>1$,
$$
(V_i'',C_i''),v_i'' \to (V_{i-1}'',C_{i-1}''),v_{i-1}''
$$ 
is a local toric model of 
$$
(V_i,C_i),v_i \to (V_{i-1},C_{i-1}),v_{i-1}.
$$ 

\end{itemize} 
Note that since $w_d$ maps to $v_1$, we see that $v_1\in W_1$. To ease notation, we will replace $V_1$ with $W_1$ 
and further shrink $V_1$ so that $C_1$ is the inverse image of $C_1''$, and shrink $Z'$ near 
$z'$ accordingly.

\emph{Step 3.} 
By \ref{ss-special-toric-tower}(4), we have a diagram of couples 
$$
\xymatrix{
(V_d'',C_d'') \ar[rd] \ar@{-->}[rr] & & (P''=\PP^{d-1}_{V_1''},G'') \ar[ld]\\
& (V_{1}'',C_1'')  & 
}
$$
where $G''$ is the sum of the coordinate hyperplanes and the inverse image of $C_1''$, and the horizontal arrow is 
a congruent birational map. By taking pullback of this diagram by base change to $(V_1,C_1)$, as in \ref{ss-tower-couples}(2), we get 
$$
\xymatrix{
(V_d',C_d') \ar[rd] \ar@{-->}[rr] & & (\tilde P':=\PP^{d-1}_{V_1},\tilde{G}') \ar[ld]\\
& (V_{1},C_1) & 
}
$$ 
where $\tilde G'$ is the support of the sum of the inverse images of $G''$ and $C_1$. Actually, 
$\tilde G'$ is the inverse image of $G''$ which in turn coincides with the sum of the coordinate hyperplanes on $\tilde P'$ and the inverse image of $C_1$: this is because $G''$ is equal to the sum of the coordinate hyperplanes on $P''$ and the inverse image of $C_1''$, and $C_1$ is the inverse image of $C_1''$ to $V_1$. 
Similar reasoning shows that $C_d'$ is the inverse image of $C_d''$. In particular, the horizontal map in the diagram is a congruent birational map.

We get an induced commutative diagram  
$$
\xymatrix{
& W_d\ar[ld]  \ar[rd]  & & \\
(V_d,C_d) \ar[rd] & &(V_d',C_d') \ar[ld] \ar@{-->}[r]& (\tilde P':=\PP^{d-1}_{V_1},\tilde{G}') \ar[lld]\\
& (V_{1},C_1)  & & 
}
$$
where both morphisms from $W_d$ are \'etale at $w_d$ (here we use the fact that $W_d\to V_d''$ and $V_d'\to V_d''$ are \'etale). The inverse images of $C_d,C_d'$ coincide near $w_d$ because the inverse images of $C_d,C_d''$ coincide near $w_d$ and because $C_d'$ is the inverse image of $C_d''$.

\emph{Step 4.}
Taking pullback by base change via $(Z',E')\to (V_1,C_1)$ we get a diagram 
$$
\xymatrix{
& M_\circ'\ar[ld] \ar[rd] & & \\
(X',D') \ar[rd] & & (Y_\circ',L_\circ')\ar[ld] \ar@{-->}[r] & (P'=\PP^{d-1}_{Z'},G')\ar[lld]\\
&(Z',E') &&
}
$$
satisfying the following:
\begin{itemize}
\item $Y_\circ'=Z'\times_{V_1}{V}_d'$,
\item $M_\circ'$ is the irreducible component of 
$
 Z'\times_{V_1}{W}_d
$ 
containing the point $m':=(z',w_d)$, 
\item $m'$ maps to $x'\in X'$, 
\item $M_\circ'\to X'$ and $M_\circ'\to Y_\circ'$ are both \'etale, 
\item the inverse images of $D'$ and $L_\circ'$ to $M_\circ'$ coincide near $m'$, 
\item $G'$ is the sum of the coordinate hyperplanes and the inverse image of $E'$, and 
$$
(Y_\circ',L_\circ')\bir (P'=\PP^{d-1}_{Z'},G')
$$ 
is a congruent birational map, and 
\item $(Y_\circ', L_\circ')$ is lc and any lc place of it is also an lc place of $(P',G')$.
\end{itemize}

We elaborate on some of these properties. 
By construction,
$$
(Y_\circ',L_\circ')\to (Z',E')
$$ 
coincides with the pullback of 
$$
(V_d'',C_d'') \to (V_{1}'',C_{1}'')
$$
via base change by 
$$
(Z',E')\to (V_1'',C_1'').
$$
So by \ref{ss-special-toric-tower}(5), $Y_\circ'$ is indeed equal to $Z'\times_{V_1}{V}_d'$ (rather than just an irreducible component of it). 
On the other hand, note that $z',w_d$ map to the same point $v_1$ of $V_1$, so $m'=(z',w_d)$ indeed belongs to $Z'\times_{V_1}{W}_d$. Also since $X'$ is normal and since 
$$
Z'\times_{V_1}W_d\to X'=Z'\times_{V_1}V_d
$$
is \'etale, $Z'\times_{V_1}W_d$ is normal, so only one of its components $M_\circ'$ contains $(z',w_d)$. Moreover, 
since $X'=Z'\times_{V_1}V_d$, we see that $x'$ can be identified with $(z',v_d)\in Z'\times_{V_1}V_d$.
Thus $m'\in M_\circ'$ maps to $x'$ as $w_d$ maps to $v_d$. 

On the other hand, by construction, $D'$ is the union of the inverse images of $C_d$ and $E'$. And $L_\circ'$ is the union of the inverse images of ${C}_d''$ and $E'$.  Then since the inverse images of $C_d$ and $C_d''$ to $W_d$ coincide near $w_d$ by Step 3, we deduce that the inverse images of $D'$ and $L_\circ'$ to $M_\circ'$ coincide near $m'$.

The claim about $G'$ and the congruent birational map follows from the construction (or, see the discussion prior to Lemma \ref{l-lc-place-pullback-special-toric-tower}).
The last claim follows from Lemma \ref{l-lc-place-pullback-special-toric-tower}.

\emph{Step 5.}
Since $V_d' \to V_1$
is a quasi-projective morphism, it admits a relative projectivisation  
$\overline{V}_d'\to  V_1$. Thus we get 
$$
\xymatrix{
& W_d\ar[ld]  \ar[rd]  & & \\
V_d \ar[rd] & &V_d' \ar[ld] \ar[r]& \overline{V}_d' \ar[lld]\\
& V_{1}  & & 
}
$$
where $V_d'\to \overline{V}_d'$ is the induced open immersion.

We extend the above diagram as follows. 
First let $\overline{W}_d$ be a relative projective compactification of $W_d\to V_d\times_{V_1}\overline{V}_d'$.
This induces projective morphisms  $\overline{W}_d\to V_d$ and $\overline{W}_d\to \overline{V}_d'$.
Now let $\tilde{N}'\to \overline{W}_d$ be a resolution of singularities so that the induced map 
$\tilde N'\bir \tilde P'$ is a morphism. We then get an extended commutative diagram 
$$
\xymatrix{
& W_d\ar[ld]  \ar[rrd] \ar[r] & \overline{W}_d \ar[lld] \ar[rrd] &&& \tilde N' \ar[lll]\ar[d]\\
V_d \ar[ddrr]  &&& V_d' \ar[r]\ar[ddl] & {\overline{V}}_d' \ar[ddll]\ar@{-->}[r]& 
\tilde{P}'=\PP^{d-1}_{V_1}\ar[llldd] \\
 &  & & &&\\
& & V_{1}. &&
}
$$

From this we get a diagram 
$$
\xymatrix{
&M'\ar[ld] \ar[rd] & &N' \ar[ll]\ar[d]\\
(X',D') \ar[rd] & & (Y',L')\ar[ld] \ar@{-->}[r] & P'=\PP^{d-1}_{Z'}\ar[lld]\\
&(Z',E') &&
}
$$
where 
\begin{itemize}
\item $Y'$ is the closure of $Y_\circ'$ in $Z'\times_{V_1}\overline{V}_d'$, and  
$L'$ is the closure of $L_\circ'$ union the inverse image of $E'$, 
\item $M'$ is the closure of $M_\circ'$ in $Z'\times_{V_1}\overline{W}_d$, and
\item $N'$ is the irreducible component of 
$
Z'\times_{V_1}\tilde{N}'
$
mapping onto $M'$.
\end{itemize}

\emph{Step 6.}
Note that we can assume that $Z'\to V_1$ maps the general points of $Z'$ to general points of $V_1$ otherwise $Z'$ maps into a fixed closed subset of $V_1$, so going back to Step 1 we can replace $T,V_1$ and decrease their dimensions. 
Since $\tilde{N}'$ is smooth, $Z'\times_{V_1}\tilde{N}'$ is smooth over the generic point of $Z'$, hence the general fibres of $N'\to Z'$ are smooth (though may not be irreducible).
Thus the irreducible components of the general fibres of $N'\to Z'$ are irreducible components of the general fibres of $\tilde{N}'\to V_1$.

We argue that the diagram obtained in the previous step satisfies the properties listed in the 
statement of Theorem \ref{p-local-desc-torif-bnd-fib-II}. Claims (1),(2) follow from the construction. Claims (3) to (6) follow from Steps 4 and 5. The final claim (7) holds because $X', Y',P',N'$ are obtained from the second diagram of Step 5 
which we can choose among finitely many possibilities and because the divisors $A',H'$ are pullbacks of appropariate divisors $A_{V_d}$ and $H_{\overline{V}_d'}$ on $V_d,\overline{V}_d'$, so
$$
\vol_{/Z'}(A'|_{N'}+H'|_{N'}+G'|_{N'})\le 
\vol_{/V_1}(A_{V_d}|_{\tilde{N}'}+H_{\overline{V}_d'}|_{\tilde{N}'}+\tilde{G}'|_{\tilde{N}'})
$$
taking into account the last paragraph.
\end{proof}


\vspace{2cm}

\small
\textsc{Yau Mathematical Sciences Center, JingZhai, Tsinghua University, Hai Dian District, Beijing, China 100084  } \endgraf
\vspace{0.5cm}
\email{Email: birkar@tsinghua.edu.cn\\}

\pagebreak

\end{document}